\newcommand\numberthis{\addtocounter{equation}{1}\tag{\theequation}}
\newcommand{\ind}{\mathbf{1}}
\renewcommand{\P}{\mathbb{P}}
\newcommand{\Ee}{\mathbb{E}}
\newcommand{\R}{\mathbb{R}}
\newcommand{\Rdp}{\R_{\!\scriptscriptstyle\geq 0}^E}
\newcommand{\N}{\mathbb{N}}
\newcommand\G{\mathbf{G}}
\newcommand\W{\mathbf{W}}
\newcommand\vep{\varepsilon}
\newcommand{\Bs}{\mathcal{B}^*}
\newcommand{\B}{\mathcal{B}}
\renewcommand{\Re}{\mathcal{R}}
\newcommand{\Rv}{\mathbb{R}^{E}}
\newcommand{\M}{{\R^{E\times K}}}
\newcommand{\Mp}{{(\R_{\!\scriptscriptstyle\geq 0})^{E\times K}}}
\newcommand{\p}[1]{{\mathbb P}\left(#1\right)}
\renewcommand{\S}{\mathcal{S}}
\newcommand{\tS}{\tilde{\mathcal{S}}}
\renewcommand{\l}{\langle}
\renewcommand{\r}{\rangle}
\newcommand{\C}{\mathscr{C}}
\newcommand{\lf}{ \langle\!\!\langle}
\newcommand{\rf}{\rangle\!\!\rangle}
\theoremstyle{definition}
\newtheorem{hyp}{Assumption}
\newtheorem{rem}{Remark}
\newtheorem{nota}{Notation}
\newtheorem{thm}{Theorem}
\newtheorem{definition}[thm]{Definition}
\newtheorem{cor}[thm]{Corollary}
\newtheorem{lem}[thm]{Lemma}
\newtheorem{prop}[thm]{Proposition}
\newtheorem{ex}[thm]{Example}
\title{Stochastic neutral fractions and the effective population size}
\author[1]{Raphaël Forien}
\affil[1]{BioSP, INRAE, 84000 Avignon, France}
\author[2]{Emmanuel Schertzer}
\author[2]{Zsófia Talyigás}
\affil[2]{Faculty of Mathematics, University of Vienna, Oskar-Morgenstern-Platz 1, 1090 Wien, Austria}
\author[3]{Julie Tourniaire}
\affil[3]{Université Marie et Louis Pasteur, CNRS, LmB (UMR 6623), F-25000 Besançon, France.}
\begin{document}
\maketitle

\begin{abstract}
	The dynamics of a general structured population is modelled using a general stochastic differential equation (SDE) with an infinite decomposability property. This property allows the population to be divided into an arbitrary number of allelic components, also known as stochastic neutral fractions. When demographic noise is small, a fast-slow principle provides a general formula for the effective population size in structured populations. To illustrate this approach, we revisit several examples from the literature, including  expansion fronts.
\end{abstract}

\section{Introduction and main result}

\subsection{Effective population size for structured population models}
\label{sec:effective_popsize}

The Wright-Fisher model is an idealized framework in population genetics that models the variation
of allele frequencies due to the effect of random sampling alone (neutral evolution).
It is a discrete-time model with a fixed population size $N$. At time $t=0$, each
individual is assigned an allelic type indexed by $[K] := \lbrace 1, \ldots, K \rbrace$
and at every further generation, every individual picks a parent uniformly at
random from the previous generation, independently of each other, and inherits
its allelic type.

For $n\in\mathbb{N}_0$,
let $X^{N,i}_{n}\in[0,1]$ be the frequency of allelic type $i$ in the population at generation $ n $.
Assuming that the initial frequencies $(X^{N,k}_{0}; k\in[K])$ converge in law
to a non-degenerate limit $ x_0 \in [0,1]^K $
as the population size $N$ tends to infinity,
$$
	(X^{N,k}_{[tN]} ; k\in[K], t\geq0) \Longrightarrow (Z^{k}_{t} ; k\in[K], t\geq0)
$$
in distribution where the limit process is a $(K-1)$-dimensional standard Wright-Fisher (WF) diffusion,
that is,
a diffusion taking values in the simplex $\{(z_1,...,z_K)\in [0,\infty)^K:\sum z_k=1\}$ and characterised by its
infinitesimal generator
\begin{equation*}
	\mathcal{L} f(Z) = \frac{1}{2} \sum_{1 \leq i , j \leq K} \left( \delta_{ij} Z_i - Z_i Z_j \right) \frac{\partial^2 f}{\partial z_i \partial z_j}(Z).
\end{equation*}
The convergence holds in distribution for the usual Skorohod topology \cite{etheridge_mathematical_2011}.

An alternative way to understand the Wright–Fisher model is the
so-called genealogical approach.
At a given time horizon, one can sample $k$ individuals and trace backward in time their ancestral lineages.
This generates a random ultrametric tree with $k$ leaves. After rescaling time by $N$ (as before), this
random genealogical structure converges to the celebrated Kingman coalescent \cite{kingman1982coalescent}.

The Wright-Fisher model is a purely genetic model and a fundamental question in population
genetics is to understand how the genetic composition of a population may be
influenced by ecological constraints.
The effective population $N_e$ is a key concept in population genetics, representing
the size of a Wright-Fisher model that would experience the same level of genetic drift as the actual population under study.
The study of the effective population size for structured population models has
experienced a rapid growth, see e.g. \cite{nordborg1997structured,nordborg2002separation,laporte2002effective}
and the review of Charlesworth \cite{charlesworth2009effective} with an extensive list of publications related to this question.

The common approach to compute $N_e$ is to encode the genealogical structure of a population
as a structured coalescent model \cite{wakeley2009coalescent}. Assuming that migration
between classes is much faster than coalescence, one can use a separation of time scales to
show that the genealogical structure converges to the standard Kingman coalescent up to a time rescaling which encodes the effective population size.

While this backward-in-time approach is intuitive and well-documented, it can be problematic since a population model is usually defined by prescribing its dynamics forward in time. {In general,} it is difficult to infer the law of genealogies from these forward-in-time dynamics, especially when the number of individuals in the system is not fixed a priori.
The aim of the present article is to bypass the backward approach and to study the effective population size through the lens of stochastic  neutral fractions.

Neutral fractions were first introduced by Hallatschek and Nelson \cite{
	hallatschek2008gene} in the context of F-KPP fronts. In the same setting, Birzu et al. \cite{birzu_fluctuations_2018} used simulations of a stochastic expansion front with
Allee effects to characterize the coalescence time scale.
Related ideas have also been used in other contexts, for instance in \cite{billiard_stochastic_2015}
where the dynamics of neutral markers were studied under successive selective sweeps,
or in \cite{forien_stochastic_2022,forien_central_2025}, which analyzed the dynamics of neutral markers in a family of spatial population models.
These ideas can in fact be traced back to Malécot's approach of computing probabilities of identity in state as a way to quantify the expected shared ancestry among samples of individuals \cite{malecot_mathematiques_1948}.
In this article, we generalize the neutral fractions approach to a stochastic setting in order to capture the strength of genetic drift for general structured population models.
Tough~\cite{tough2023scaling} derived a closely related scaling result for an individual-based spatial model with fixed population size, namely the multicolor Fleming–Viot process.

\subsection{Neutral fractions and assumptions} We start directly from the ``ecological'' description of the population model. We assume that individuals are structured into distinct classes, which could represent ecologically relevant characteristics such as developmental stages or spatial locations.

Let $E=\{x_1,\dots,x_{|E|}\}$ denote the finite set of these classes.
Let $V^N_t$
be a weak solution to the $|E|$-dimensional stochastic differential equation
\begin{equation}
	\label{eq:SDE_tot_pop}
	dV^N_t= b(V^N_t)\ dt+\frac{1}{\sqrt{N}}a(V^N_t)\ dW_t,
\end{equation}
where $N$ is a large demographic parameter,
$W_t$ is a $|E|$-dimensional standard Brownian motion, and $b:\Rv\to\Rv$
and $a:\Rv\to \R^{E\times E}$ are smooth enough to guarantee existence and uniqueness
of a weak solution to the SDE (\ref{eq:SDE_tot_pop}) for all $N\in\N$. In this framework, $V_t\in\R^E$ represents the configuration of the population at time $t$ (i.e., $(V_t)_x$
measures the ``number'' of individuals of type $x$ at time $t$).

We assume that this SDE represents a {\it infinite population} model in the sense
that it can be decomposed into infinitely many neutral components.
Formally, our decomposability condition translates into a semilinear structure in the drift and the covariance.

\begin{hyp}[{$\infty$-decomposable SDE}]
	\label{hyp:decomposability}
	There exist $F:\Rv\to \R^{E\times E}$ and
	$C :\Rv\to \R^{(E\times E)\times E}$ such that,
	\begin{equation}
		\label{eq:decomposability}
		\forall v\in\Rv, \quad b(v)=F(v)v, \quad
		\text{and} \quad
		aa^*(v)=C(v)v
	\end{equation}
	where
	\begin{eqnarray*}
		F(v)v = \left(\sum_{x,y} F(v)_{xy} v_y\right)_{x\in E} \ \ \mbox{and} \ \ \
		C(v)w = \left(\sum_{z\in E} C_{xy,z} w_z\right)_{x,y\in E}.
	\end{eqnarray*}
	In addition, we assume that there exists $\sigma:\R^E\times \R^E\to \R^{E\times E}$
	such that
	\begin{equation} \label{def_C}
		\forall v,w\in \R^E, \quad \sigma(v,w)\sigma^*(v,w)  = C(v)w.
	\end{equation}
\end{hyp}

{\begin{ex}\label{ex:0}
	Take $a_{xy}(v)=\delta_{xy}\sqrt{g(v_x)v_x}$,
	with $g\geq 0$. We have $a a^*(v) = \mbox{Diag}(g(v)) v$, which can be rewritten as $a a^*(v)=C(v) v$, with $C_{xy,z}(v)= \delta_{xz} \delta_{yz} g(v_{z})$.
\end{ex}}

For more intuition on
$F(v)$ and $C(v)$, we refer to Section \ref{sect:asexual-populations}, where those
operators are explained in terms of the individual mean and a covariance matrix
in the context of individual-based models.  Informally, \( F(v)_{xy} \) represents
the rate at which an individual of type \( y \) produces offspring of type \( x \).
We will refer to $F(v)$ as the mean matrix at $v$. Similarly, \( C_{xy,z}(v) \) can
be interpreted as an instantaneous covariance matrix: it quantifies the covariance
between the numbers of offspring of types \( x \) and \( y \) produced by a single
individual of type \( z \). In particular, Example \ref{ex:0} can be thought of
as a population with local branching.
We refer to \eqref{eq:mean} and \eqref{eq:covariance} below for a more detailed discussion.

\begin{rem}
	Such a decomposition is not necessarily unique (i.e.~several choices of $ F $ and $ C $ are compatible with the same $ b $ and $ a $), and we will see that this choice affects the results.
	The questions of whether one decomposition is the \emph{right} one actually depends on the individual-based model that the SDE \eqref{eq:SDE_tot_pop} approximates.
		{ This will be further discussed in details in Section \ref{sect:unicity}}.

	For the purpose of this work, we assume that a choice of $ F $ and $ C $ satisfying Assumption~\ref{hyp:decomposability} has been made once and for all and study the resulting neutral fractions.
	As we shall see in the statement of the theorem, only the $F$-decomposition
	affects the convergence result, while the $(\sigma, C)$-decomposition does not.
\end{rem}

The intuitive idea of neutral fractions is to decompose the population into allelic types so that each individual will be characterized by an ``ecological character" valued in $E$ and an allelic type valued in $[K]$.   Formally, the $K$ neutral fractions $(U^{N,1},...,U^{N,K})$ are defined as
the solutions to the system of SDEs
\begin{equation}
	\label{eq:SDE_fraction}
	\forall k\in[K], \quad dU_t^{N,k}=F\left(\sum_{j=1}^K U_t^{N,j}\right)U_t^{N,k}dt
	+\frac{1}{\sqrt{N}} \ \sigma\left(\sum_{j=1}^{K} U_t^{N,j},U_t^{N,k}\right)dW_t^k,
\end{equation}
where the $W^k$'s are independent $|E|$-dimensional Brownian motions.

Assumption~\ref{hyp:decomposability} implies that $\sum_j U^{N,j}$ is a weak solution
to the SDE \eqref{eq:SDE_tot_pop}. Indeed, under Assumption~\ref{hyp:decomposability}
it is clear that the drift term is \textit{compatible} with the sum.
On the other hand, the covariance structure of the sum of the fractions reads
\begin{equation*}
	\sum_j \sigma\sigma^*\left(\sum_k U^{N,k},U^{N,j}\right)
	=\sum_j C\left(\sum_k U^{N,k}\right)U^{N,j}=aa^*\left(\sum_j U^{N,j}\right),
\end{equation*}
which is precisely the covariance matrix obtained for $V_t^N$ in \eqref{eq:SDE_tot_pop}.
In other words, Assumption~\ref{hyp:decomposability} guarantees that
$U^{N,1},\ldots,U^{N,K}$ can be interpreted as the population sizes of $K$ neutral
subfamilies of a population whose size evolves according to the SDE \eqref{eq:SDE_tot_pop}.

In this work, we are interested in the large population limit of the \textit{composition matrix}
\begin{equation}
	U^N_t:=(U_t^{N,1},\ldots,U_t^{N,K}),
\end{equation}
where the $(U^{N,k})_{k=1}^K$ are solutions to \eqref{eq:SDE_fraction},
when the solution to the deterministic dynamic
($N\to\infty$) associated to \eqref{eq:SDE_tot_pop} has an attractive
fixed point. More precisely, we will make the generic assumption of a stable ecological equilibrium.

\begin{hyp}[Regularity and Positivity]\label{A:regularity}
	The $(F,\sigma)$-decomposition from Assumption~\ref{hyp:decomposability}
	is such that
	\begin{enumerate}
		\item For every $K\in\N$ and $N\in\mathbb{R}^+$,
		      the system of SDEs \eqref{eq:SDE_fraction} admits a unique weak solution;
		\item The function $F$ is in {$\mathcal{C}^3(\Rv,\R^{E\times E})$}
		      and {$\sigma\in\mathcal{C}^1(\Rv\times \Rv,\R^{E\times E})$}.
		\item For all $v \in \Rdp$, $F(v)$ is a Metzler matrix; that is, all off-diagonal entries of $F(v)$ are non-negative.
	\end{enumerate}
\end{hyp}

\begin{rem}
	\label{remm:metzler}
	Assumption~\ref{A:regularity}-3 is a natural condition in the context of SDEs derived as scaling limits of individual-based models. Further details can be found in Section~\ref{sect:asexual-populations}.
\end{rem}

\begin{hyp}[Existence of a fixed point]
	\label{A1}
	There exists a vector $\tilde h\in\R_+^{E}$ such that
	\begin{equation*}
		b(\tilde h)=F(\tilde h)\tilde h=0.
	\end{equation*}
\end{hyp}
\begin{hyp}[Local stability]
	\label{A2}
	There exists an open neighbourhood $\B$ of $\tilde h$ such that
		{$\B\subset \R_+^E$} and,
	for all $v_0\in\B$,
	the solution to the deterministic system
	\begin{equation}
		\label{eq: deterministic}
		\dot{v}_t=b(v_t)=F(v_t)v_t,
	\end{equation}
	with initial condition $v_0$, converges to $\tilde h$ as $t\to\infty$.
	We further assume that the eigenvalues $(\tilde \lambda_i)$ of the Jacobian
	matrix $J\equiv J_b(\tilde h)$ of
	$b$ at $\tilde{h}$ are such that
	\begin{equation*}
		0>\Re(\tilde \lambda_1)\geq\Re(\tilde \lambda_2)\geq\ldots\geq\Re(\tilde \lambda_{|E|}).
	\end{equation*}
\end{hyp}
\begin{hyp}\label{A3}
	We assume that there exists $t_0>0$ such that
	\begin{equation*}
		\forall x,y\in E,
		\quad (e^{t_0F(\tilde h)})_{xy}>0.
	\end{equation*}
	This combined with Assumption~\ref{A:regularity}-3 implies that $0$ is
	the principal eigenvalue of $F(\tilde h)$ and that the corresponding
	eigenspace is spanned by
	$\tilde h$. Moreover there exists an associated left-eigenvector
	$h\in \R_+^E$ that can be normalised so that
	$$\left<h,\tilde h\right>=1.$$

\end{hyp}

\begin{rem}[Perron-Frobenius type condition]
	The definition of $h$ and $\tilde h$ is analogous to what is obtained in  standard Perron--Frobenius decompositions for
	mean offspring matrices of multitype Galton-Watson processes (see e.g.~\cite{athreya2012branching}) with the difference that $\tilde h$ is not the solution of a linear equation. As a result, it cannot be normalized to have unit $L^{1}$-norm.
\end{rem}

\begin{rem}\label{rem:explicit_construction}
	In Assumption~\ref{hyp:decomposability}, we assume that the noise correlation
	matrix $a$ admits a $(\sigma,C)$ decomposition
	satisfying~\eqref{eq:decomposability}-\eqref{def_C}.
	Here, we provide a simple construction of $\sigma$ under the assumption that,
	for $v \in \mathbb{R}^{E}$, the condition $v_{y}=0$ implies $a_{xy}(v)=0$.
	This assumption is natural, since $a_{xy}$ quantifies the strength of the
	noise coming from the $y$-component on the $x$-component, and it typically arises in
	limiting SDEs derived as approximations of individual-based models.
	As we shall see, this assumption guarantees the existence of functions $C$ and $\sigma$
	satisfying \eqref{eq:decomposability} and \eqref{def_C}. Once again,
	this decomposition is not unique.

	Consider the functions $R:\Rdp\to \R^{E\times E}$ and $A:\Rdp\to
		\R^{E\times E}$ defined by
	\[ \forall v\in \R^E,\; \forall x,y\in E, \quad
		R_{xy}(v) = \delta_{xy}\sqrt{v_y}, \quad A_{xy}(v)=\begin{cases}
			a_{xy}(v)/\sqrt{v_y} & v_y\neq 0 \\
			0                    & v_y=0.
		\end{cases}
	\]
	This construction ensures that, for all $v\in \R^E$, $a(v)=A(v)R(v)$. Next, we define
	$\sigma$ and $C$ such that,
	\[
		\forall v,w\in \Rdp, \quad \sigma(v,w)=A(v)R(w),
	\]
	and,
	\[
		\forall v\in \Rdp,\; \forall x,y,z\in E, \quad C_{xy,z}(v)=A_{xz}(v)A_{yz}(v).
	\]
	It is straightforward to verify that the functions defined above satisfy
	\eqref{eq:decomposability} and \eqref{def_C}.
\end{rem}

{\begin{ex}\label{ex:1}
	With the above construction, we obtain $A_{xy}(v) =\delta_{xy}\sqrt{g(v_x)}$
	in Example~\ref{ex:0}.
\end{ex}}

\subsection{Main results}

We now provide some intuitive explanation of the forthcoming result. In \cite{katzenberger1990solutions}, Katzenberger considered a generalization of our problem involving an SDE (not necessarily $\infty$-decomposable) whose deterministic component drives the SDE to an invariant stable manifold. Under a small-noise assumption, it is shown that, after accelerating time by a suitable factor, the system converges to a Markovian SDE constrained on the invariant manifold. In practice, the coefficients of the limiting dynamics are not fully explicit. A first approach to further understanding the limiting SDE was proposed by Parsons and Rogers  in \cite{parsons2017dimension}, where they provide an interpretation of the coefficients in terms of geometrical properties of the deterministic flow. In this paper, we will see that the algebraic structure of $\infty$-divisible populations allows for an explicit characterization of the SDE in terms of the Wright-Fisher diffusion.

The assumptions of the previous section imply that the deterministic component of the neutral fractions dynamics \eqref{eq:SDE_fraction} drives the system to the invariant manifold
consisting of all convex combinations of $\tilde h$ (see below in~\eqref{eq:def_gamma}). {Then the general limiting Markovian SDE of Katzenberger will translate to an SDE for the constants of this convex combination; that is, to an SDE for the proportions of the subpopulations within the total population. Our result says that the coefficients of this SDE are those of the Wright-Fisher diffusion.}

Before stating our result, we will need some preliminary definitions.
For $ \theta \in \R^{K} $ and $ v \in \R^E $, we write $ \theta \otimes v $ for the element of $ \M $ whose entries are given by
\begin{equation*}
	(\theta \otimes v)_{xj} = v_x \theta^j, \quad x \in E, j \in [K].
\end{equation*}
With this notation, the invariant manifold is given by
\begin{equation}
	\label{eq:def_gamma}
	\Gamma^K:=\left\lbrace \theta \otimes \tilde h : \sum_{k} \theta^k=1, \theta^k\geq0\right\rbrace.
\end{equation}
We define furthermore the probability distribution
$$
	\Pi_x \ := \ \tilde h_x h_x, \quad x\in E,
$$
and
$$
	n_{e}(x,y) \ := \ \left( \frac{1}{{\tilde h_x \tilde h_y}}\sum_{z} C(\tilde h)_{xy, z} {\tilde h_z} \right)^{-1}, \quad x,y\in E.
$$
Finally, define
\begin{eqnarray}
	\Sigma^2& := & \langle h, { a (\tilde h)a^*(\tilde h)} h\rangle = \sum_{x,y} \frac{\Pi_x\Pi_y}{n_{e}(x,y)}. \label{eq:Sigma-ancestral}
\end{eqnarray}
Interpretations of these quantities can be found in Sections~\ref{sect:ancestral}
and~\ref{sect:asexual-populations}.

\begin{thm}
	\label{th:main_result}
	Let $U_0$ be an element of
	$\M$ with non-negative entries such that
	$V_0:=\sum_{j}U_0^j\in \B$.
	Assume that for all $N\in\N$, $U_0^N=U_0$. Then
	\begin{equation*}
		(U^N_{\frac{tN}{\Sigma^2}}; t\in(0,T]) \Longrightarrow
		\ \ (\theta_{t} \otimes \tilde h; t\in(0,T]),  \ \ \mbox{in $D((0,T], \R^{E \times K})$}
	\end{equation*}
	where $(\theta_{t}; t\geq 0)$ is distributed as a standard $(K-1)$-dimensional
	Wright-Fisher diffusion with initial condition $({\pi^1}/\tilde{h},...,{\pi^K}/\tilde{h})$
	(here, we mean division coordinate by coordinate),
	where ${\pi^j\equiv\pi^j}(U_0)$ is defined as the limit as
	$ t \to \infty $ of the solution $\pi_t$ to the Cauchy problem
	\begin{equation*}
		\begin{cases}
			\frac{d}{dt}{\pi}_t= F(v_t)\pi_t, & \quad \pi_0=U_0^j, \\
			\frac{d}{dt}{v}_t=F(v_t)v_t       & \quad v_0=V_0.
		\end{cases}
	\end{equation*}

\end{thm}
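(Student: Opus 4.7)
The plan is to apply Katzenberger's fast--slow averaging framework~\cite{katzenberger1990solutions}: the deterministic drift in~\eqref{eq:SDE_fraction} drives the state onto the invariant manifold $\Gamma^K$ on timescale $O(1)$, while the $1/\sqrt N$ noise produces a nontrivial diffusion on $\Gamma^K$ on timescale $N$. The first step is to identify slow variables that are \emph{exactly} preserved by the deterministic flow. Under $\dot U^k = F(V)U^k$, Assumption~\ref{A2} gives $V_t\to\tilde h$, and the Perron--Frobenius structure from Assumption~\ref{A3} yields $U^k_t\to f_k(U_0)\,\tilde h$ for a scalar $f_k(U_0)$ satisfying $\sum_k f_k\equiv 1$. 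These maps $f_k:\M\to[0,1]$ are invariant under the deterministic flow by construction, and on $\Gamma^K$ one has $f_k(\theta\otimes\tilde h)=\theta^k$. In the notation of the theorem, $f_k(U_0)=\pi^k(U_0)/\tilde h$.

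The heart of the proof is an Itô computation for $f_k(U^N_t)$. Invariance of $f_k$ kills the $b$-drift, so
\begin{equation*}
df_k(U^N_t)=\frac{1}{2N}\sum_{j,x,y}\partial^2_{U^j_x U^j_y}f_k(U^N_t)\,\bigl(C(V^N_t)U^{N,j}_t\bigr)_{xy}\,dt+dM^{N,k}_t,
\end{equation*}
where $M^{N,k}$ is a local martingale. Differentiating $\sum_k f_k\equiv 1$ once produces the identity $\partial_{U^j_x}f_k\big|_{\Gamma^K}=(\delta_{kj}-\theta^k)h_x$. Combined with the $\infty$-decomposability relation $\sigma\sigma^*(v,w)=C(v)w$ and the definition of $\Sigma^2$ in~\eqref{eq:Sigma-ancestral}, this yields
\begin{equation*}
d\langle f_k(U^N),f_l(U^N)\rangle_t\big|_{\Gamma^K}=\frac{\Sigma^2}{N}\bigl(\delta_{kl}\theta^k-\theta^k\theta^l\bigr)\,dt,
\end{equation*}
which is exactly the Wright--Fisher infinitesimal covariance after the time-change $t\mapsto tN/\Sigma^2$. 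Differentiating $\sum_k f_k\equiv 1$ a \emph{second} time yields a compensating identity showing that the Itô correction in the first display vanishes \emph{exactly} on $\Gamma^K$. Hence both drift and covariance of the slow coordinates match a pure Wright--Fisher diffusion.

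It remains to make the averaging argument rigorous. One would establish: (i) with high probability, $V^N$ stays in a neighbourhood of $\tilde h$ over the rescaled horizon, with off-manifold fluctuations of order $1/\sqrt N$ controlled by a linearised Ornstein--Uhlenbeck process driven by the stable Jacobian $J_b(\tilde h)$; (ii) a Taylor expansion of the Itô drift of $f_k$ around $\Gamma^K$ (where it vanishes) bounds its accumulated contribution on timescale $N$ by $o(1)$; (iii) tightness of $(f(U^N_{tN/\Sigma^2}))_N$ on $[\varepsilon,T]$ for any $\varepsilon>0$, combined with the martingale problem characterisation, identifies the limit as the Wright--Fisher diffusion. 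The main obstacle is (ii): preventing off-manifold fluctuations from accumulating into a spurious drift of order one over timescale $N$. The decisive ingredient is the identity $\sum_k f_k\equiv 1$, a consequence of $\infty$-decomposability, which forces the leading-order Itô correction to vanish identically on $\Gamma^K$. The restriction to $(0,T]$ reflects the fact that for $U_0\notin\Gamma^K$ there is a fast transient of rescaled duration $1/N\to 0$, after which the limit starts from $f(U_0)=\pi(U_0)/\tilde h$ at time $0^+$.
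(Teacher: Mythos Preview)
Your overall strategy coincides with the paper's: define slow coordinates $f_k=\theta^k$ invariant under the deterministic flow, apply It\^o's formula, and show that on $\Gamma^K$ the It\^o correction vanishes while the quadratic variation is Wright--Fisher. The formulas you state for $\partial_{U^j_x}f_k\big|_{\Gamma^K}$ and for the bracket are correct, and your steps (i)--(iii) correspond to the paper's second proof.

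There is, however, a genuine gap in how you derive the two key identities. Differentiating $\sum_k f_k\equiv 1$ once gives only $\sum_k \partial_{U^j_x}f_k=0$, not the formula $(\delta_{kj}-\theta^k)h_x$ for each individual $k$; and differentiating a second time gives only $\sum_k\nabla^2 f_k=0$, which shows at most that the \emph{sum over $k$} of the It\^o corrections vanishes, not that each $B^k$ vanishes separately on $\Gamma^K$. The missing ingredient is the \emph{scalar-product representation}: because the flow $\varphi(t,v_0,\cdot)$ in~\eqref{eq:flot_u} is linear in its third argument, one has $f_k(u)=\langle H(\mathcal S(u)),u^k\rangle$ for some $H:\mathcal B\to\R^E$ satisfying $\langle H(v),v\rangle=1$ (the paper's Proposition~\ref{prop:scalar_form}). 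Differentiating this last identity in $v$ yields relations among the derivatives of $H$ (equations~\eqref{conservation_1}--\eqref{conservation_2}); substituting the scalar-product form into $\nabla f_k$ and $\nabla^2 f_k$ then gives the individual gradient formula and an expression for $\mathrm{Tr}[\G^*\nabla^2 f_k\,\G]$ on $\Gamma^K$ that collapses to $\theta^k A-\theta^k A=0$ term by term. Without the linearity in $u^k$ you cannot separate the contributions of the families, and the argument as written does not close. You should also note that applying It\^o requires $f_k\in C^2$, which is nontrivial since $f_k$ is defined as a $t\to\infty$ limit; the paper handles this in Lemma~\ref{lem:reg_katzenberger}.
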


\begin{rem}
	We write that the convergence takes place in $ D((0,T], \R^{E \times K}) $, which is equivalent to saying that the sequence of processes converges in $D([\varepsilon,T], \R^{E \times K})$ for any $ \varepsilon \in (0,T) $.
\end{rem}

Theorem~\ref{th:main_result} shows that the fluctuations of neutral fractions converge to a standard Wright-Fisher diffusion after rescaling time by $N/\Sigma^2$. From a biological standpoint, this entails that our model behaves at the limit as a Wright-Fisher model with an effective population size $N_e$ given by
\begin{equation}\label{eq:Ne}
	N_e = \frac{N}{\Sigma^2}.
\end{equation}

\begin{rem}[Effective vs.~census population size]
Assume that $aa^*(\tilde h)$ is invertible.
		One can check (using Cauchy-Schwarz inequality) that
		\[1=\l h,\tilde h\r^2\leq \l\tilde h, aa^*(\tilde h)^{-1} \tilde h\r \l h, aa^*(\tilde h)h\r.\]
		This shows that
		\[ N_e \leq  \l\tilde h, aa^*(\tilde h)^{-1} \tilde h\r N.\]
		In Example~\ref{ex:0} with $g\equiv 1$, this inequality reads
		\[N_e\leq \l\tilde h,1\r N,
		\]
        where the right-hand side is interpreted as the carrying capacity of the population. This is consistent with the 
      biology literature, where it is often observed~\cite{Frankham_1995} that the effective population size of a structured population is smaller than its census size.

\end{rem}

This result can be understood as a slow-fast principle. The fast ecological dynamics ensure that the types within each fraction are distributed according to $\tilde h/|
	\tilde h|_1$ whereas the relative proportions of each fraction evolve on a slower ``evolutionary" time
scale according to a Wright-Fisher diffusion. This is illustrated in Figure~\ref{figure}.
This generalizes the \emph{collapse of structure} evidenced in Wright's island model \cite{etheridge_mathematical_2011}, in which a population subdivided in discrete demes linked by migration is shown to behave asymptotically as a panmictic population with an effective population size.

\begin{figure}[ht]
	\centering
	\begin{subfigure}[b]{\linewidth}
		\centering
		\includegraphics[width=\linewidth]{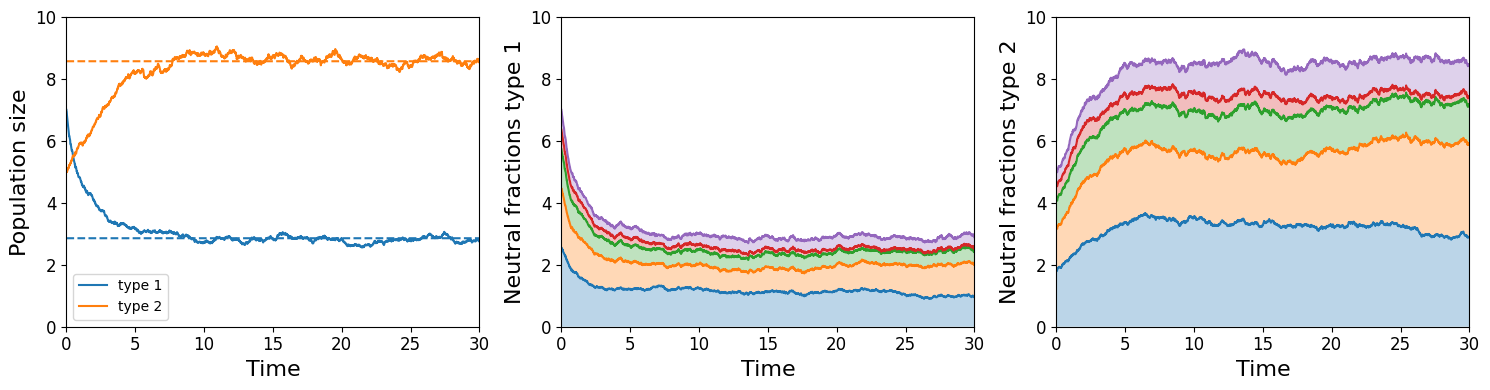}
		\caption{}
		\label{fig:N200_small_time}
	\end{subfigure}
	\begin{subfigure}[b]{\linewidth}
		\centering
		\includegraphics[width=\linewidth]{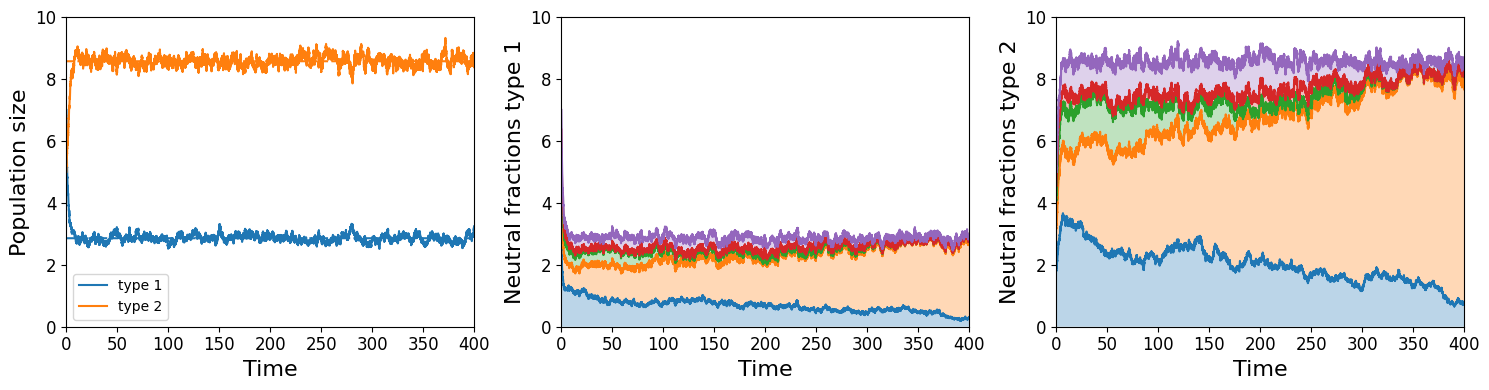}
		\caption{}
	\end{subfigure}
	\caption{One realization of the dynamics \eqref{eq:SDE_fraction} for $F$ as in \eqref{eq:ex_F} with
		$b_0 = 1$, $b_{11} = -0.2$, $b_{12} = -0.05$, $b_{22} = -0.1$, $b_{21} = -0.05$,
		$\alpha = \beta = 0.1$, $N = 200$, and $K = 5$ on (a) the ecological and (b) the evolutionary timescale.
		In the left panel, orange (resp.~blue) lines correspond to the population size of type 1 (resp.~type 2).
		Dashed lines show the corresponding deterministic equilibrium
		($\tilde{h} \approx (2.86, 8.57)$). The middle and right panels show the fraction dynamics
		for the first and second type, respectively.	}
	\label{figure}
\end{figure}

\subsection{Ancestral interpretation}
\label{sect:ancestral}
As already discussed in Section~\ref{sec:effective_popsize}, our approach allows to bypass the usual genealogical approach for understanding the effective population size. Interestingly, our forward approach bears a natural coalescent interpretation that allows us to conjecture the genealogical structure underpinning the forward dynamics \eqref{eq:SDE_tot_pop}.

Start from a structured coalescent where lineages take value in $E$ and migrate from $x$ to $y$ in $E$ at rate
$$
	m_{x,y} = \frac{ F(\tilde h)_{x,y}
		\tilde h_{y}}{\tilde h_{x}}.
$$
In addition, assume that a pair of lineages located respectively at $x$ and $y$ in $E$ coalesce independently at rate
$$
	c^N_{x,y} = \frac{1}{N n_{e}(x,y)}.
$$
A direct computation shows that the equilibrium distribution for a lineage is given by $\Pi$. Since coalescence rates are slow, lineages are asymptotically at equilibrium between two successive coalescence events. Thus, if one accelerates time by $N/\Sigma^2$, the structured coalescent converges to the (unstructured) Kingman coalescent. This can be rigorously proved along the same lines as Theorem 1 of \cite{nordborg1997structured}.

Note that the ancestral structure suggested by the neutral fractions is not entirely obvious from the forward dynamics. In particular, the underlying structured coalescent can allow for ``crossed coalescence" and extends beyond the structured coalescent models often used in the literature (see e.g., \cite{nordborg2002separation}) where coalescence only occurs when two lineages share the same type.

\subsection{Outline}

We first detail several implications and applications of our result in Section \ref{sec:IBM}.
In Section~\ref{sect:unicity}, we discuss the non-uniqueness of the decomposition
\eqref{eq:decomposability}.
In Section \ref{sect:neutral}, we state some general properties of stochastic neutral fractions.
Section~\ref{sec:proof} is dedicated to the proof of Theorem~\ref{th:main_result}.

\section{Examples}\label{sec:IBM}

In this section, we review several examples from the literature to illustrate how our result can be applied to various ecological scenarios.

\begin{itemize}
	\item In Section \ref{sect:asexual-populations}, we consider a general class of asexual populations. This will allow us to provide an intuitive interpretation of the various quantities introduced in Assumption~\ref{hyp:decomposability}.
	\item In Section \ref{sect:sexual-populations}, we show how our method can be extended to sexual populations.
	\item In the following two sections, we {\it formally} apply our methods to the case of an infinite state space (typically $E$ will be a subset of $\R$). While those results cannot be deduced from Theorem~\ref{th:main_result}, those two cases provide some natural conjectures for interesting biological scenarios and also call for a generalization of the stochastic neutral fractions approach in the context of infinite dimensional state space.  In these two cases, the expression of $\Sigma^2$ in (\ref{eq:Sigma-ancestral}) is replaced by an integral
	      of the type
	      \begin{equation}
		      \label{eq:variance_continuum}
		      \int_{E} \tilde r(x) \tilde h(x) h^2(x) dx.
	      \end{equation}
	      where $h,
		      \tilde h$ are principal eigenfunctions
	      of a linear differential operator and its adjoint and $\tilde r$ is a local reproduction rate.
	      One interesting aspect of infinite state spaces
	      is that
	      the above integral may not be finite, and a formal extension of Theorem \ref{th:main_result} may sometimes be impossible. In Section \ref{sect:F-KPP}, we present the interesting example of F-KPP fronts with Allee effect and present some recent results and conjectures when $\Sigma^2=\infty$.
\end{itemize}

\subsection{Asexual populations}
\label{sect:asexual-populations}
Population models in biology are often specified in terms  of stochastic individual based models.
We think about a general asexual population with frequency dependent reproduction rates.
The population's evolution is described by a family of rates
\begin{equation}\label{eq:branching-rates}
	(r(x, {\bf V}, {\bf n}); x\in E,  {\bf V}, {\bf n}\in \mathbb{N}_0^{E}),
\end{equation}
where $\mathbb{N}_0=\mathbb{N}\cup\{0\}$. Let ${\bf V}_t(x)$ be the number of individuals in class $x$ at time $t$. Given a population composition ${\bf V}_{t} = ({\bf V}_{t}(x); x\in E)$ at time $t$, we assume that every individual of type $x\in E$ dies and gives birth to ${\bf n}\in\mathbb{N}_0^{E}$ individuals at rate $r(x,{\bf V}_t, {\bf n})$.  Formally, this is a continuous time Markov chain in the space ${\mathbb N}_0^{E}$ with infinitesimal generator
\begin{equation}\label{eq:generator}
	G f({\bf V}) \ = \ \sum_{x\in E, {\bf n}\in \mathbb{N}_0^{E}}
	{\bf V}(x) r(x,{\bf V}, {\bf n}) \left(f({\bf V}+{\bf n} - e_{x}) - f({\bf V})\right),
\end{equation}
where $e_{x}$ is the vector of dimension $|E|$ with only $0$'s except for the $x$-th coordinate.

We assume that the branching process is parametrized by a demographic parameter $N$ so that
\begin{equation*}
	r^{(N)}(x, {\bf V}, {\bf n}) = \rho\left(x, \frac{{\bf V}}{N}, {\bf n}\right),
\end{equation*}
for some prescribed rates
\begin{equation*}
	(\rho(x, {\bf V}, {\bf n}); x\in E,  {\bf V}\in \mathbb{R}^{E}, {\bf n}\in \mathbb{N}_0^{E}).
\end{equation*}
For every ${\bf V}\in \R_+^E$, define the mean matrix
\begin{equation}
	\label{eq:mean}
	\ F({\bf V})_{xy}
	\ := \ \sum_{{\bf n}\in \mathbb{N}_0^{E}}	\rho\left(x,{\bf V},\bf{n}\right) ({\bf n}_y-\delta_{xy}), \quad x,y\in E,
\end{equation}
and
\begin{equation}\label{eq:covariance}
	\ C({\bf V})_{xy,z}:=\sum_{{\bf n}\in \mathbb{N}_0^{E}}	\rho\left(z,{\bf V},\bf{n}\right) \left({\bf n}_x-\delta_{zx}\right)
	\left({\bf n}_{y}-\delta_{zy}\right), \quad x,y,z\in E,
\end{equation}
assuming that the above sums are always well defined and finite.
Under this
definition, $F({\bf V})$ is Metzler; this justifies Remark~\ref{remm:metzler}.

Let $G^N$ be the generator of the rescaled process ${\bf V}/N$.
From the expression of the generator (\ref{eq:generator}) and a Taylor expansion in $N$, we get that, for any bounded $\mathcal{C}^2$ function $ f $,
\begin{eqnarray}
	\label{eq:generator-N}
	\forall\, {\bf V}\in \R^E, \ \ G^{N}f\left({\bf V}\right) \ = \
	\l F\left({\bf V}\right){\bf V}, \nabla f\left({\bf V}\right)  \r + \frac{1}{2 N} \sum_{x,y\in E}  \bigg(C\left({\bf V}\right){\bf V}\bigg)_{xy} \partial_{xy} f\left( {\bf V}\right) + o\left(\frac{1}{N}\right),
\end{eqnarray}
where
$$
	\forall x,y\in E, \quad
	\bigg (C\left({\bf V}\right){\bf V}\bigg)_{xy}
	\ = \ \sum_{z\in E} C\left({\bf V}\right)_{xy,z}  {\bf V}_z.
$$

If we ignore the $o(1/N)$ terms, the generator coincides with that of an infinite decomposable diffusion as in Assumption~\ref{hyp:decomposability}, suggesting
that the rescaled Markov process is well-approximated by the diffusion. A quantitative estimation for the approximation can be established along the same lines as Theorem 2.4.1.in~\cite{britton2019stochastic}.

\subsection{Sexual reproduction}
\label{sect:sexual-populations}

We now consider a continuous-time sexual population.
At time $t$, we denote by $M_t^N$ the number of males and $F^N_{t}$ the number of females. We assume that each pair of male-female mates at rate $\frac{1}{N}$; the resulting offspring is a male with probability $p\in(0,1)$ and a female with probability $1-p$. Finally, we assume that each individual dies at rate $\frac{\alpha}{N^2}(F^N_t+ M_t^N)^2$ where $\alpha$ and $N$ are both positive. More complex logistic models of sexual populations have been considered in \cite{castillo1995logistic}, but for the sake of simplicity, we will restrict to this specific model. However, the same method applies in the general case.

We are interested in the number of male and female genes at an autosomal locus of interest. For a diploid model, we  introduce the rescaled number of male and female genes
$$
	m_t^N := \frac{2 M_t^N}{N}, \ \ f_{t}^N := \frac{2 F_t^N}{N}.
$$
As in the previous example, we can compute the generator of the rescaled Markov process. A direct computation shows that, for every twice differentiable function $\rho : \R_+^2 \to \R$,
$$
	G^N \rho(m,f) \ =
	\ b_m(m,f) \partial_{m} \rho(m,f) + b_f(m,f) \partial_{f} \rho(m,f) +
	\frac{1}{2N}\left( a_m(m,f)  \partial_{mm} \rho +  a_m(m,f)  \partial_{ff} \rho
	\right) + o\left(\frac{1}{N}\right),
$$
where
$$
	b_m(m,f) = \frac{p}{2} m f -\frac{\alpha}{4} m(f+m)^2, \ \
	b_f(m,f) = \frac{1-p}{2} m f -\frac{\alpha}{4} f(f+m)^2,
$$
and
$$
	a_m(m,f) \ =\ pfm + \frac{\alpha}{2} m (f+m)^2, \ \ \ a_f(m,f) \ =\ (1-p)fm + \frac{\alpha}{2} f (f+m)^2.
$$
This suggests that the  discrete stochastic system
is well approximated by the SDE
$$d \left[\begin{array}{c}m_t \\ f_t\end{array}\right]
	\ = \ b(m_t,f_t)dt + \frac{1}{\sqrt{N}}
	\left(\begin{array}{cc} \sqrt{a_{m}(m_t,f_t)} & 0                     \\
             0                          & \sqrt{a_{f}(m_t,f_t)}
		\end{array}\right)dW_t
$$
where $W$ is a 2 dimensional standard Brownian motion. In order to apply our method, we impose a decomposition of $b(m,f)$ into the product $F(m,f) \left[\begin{array}{c}m \\ f\end{array}\right]$ that encapsulates the underlying biological mechanism. In the following, we choose
$$
	F(m,f) \ = \ \left( \begin{array}{cc} - \frac{\alpha}{4} (f+m)^2 + \frac{1}{2} \frac{ p f}{2} & \frac{p}{2} \frac{m}{2}                               \\
             \frac{1-p}{2} \frac{f}{2}                                    & -\frac{\alpha}{4} (f+m)^2 + \frac{1-p}{2} \frac{m}{2}
		\end{array}\right)
$$
reflecting the fact that when a new male or a new female is born,  genes are inherited equally from both parents.

We first note that the only stable equilibrium of the system is attained
at
$$
	m_{eq} \equiv \tilde h({m}) = p n_{eq}, \ \  f_{eq} \equiv \tilde h({f}) =  (1-p) n_{eq}  \ \ \mbox{where   $n_{eq} := 2\frac{p(1-p)}{\alpha}$.}
$$
Let $h$ be the left $0$-eigenvector of $F(\tilde h
	)$ such that $\langle \tilde h,h \rangle =1$. A direct computation shows that
$$
	h({m}) \ = \ \frac{1}{2 m_{eq}}, \ \ h({f}) \ = \ \frac{1}{2 f_{eq}},
$$
and it follows that
\begin{eqnarray*}
	\Sigma^2 & = & h(m)^2 a_m(m_{eq},f_{eq}) + h(f)^2 a_f(m_{eq},f_{eq})  \\
	& = & \left(\frac{1}{4 m_{eq}} + \frac{1}{4 f_{eq}}\right)\left(p(1-p) + \frac{1}{2}\alpha  n_{eq}\right) n_{eq}.
\end{eqnarray*}
Write
$$
	M_{eq} = \frac{N m_{eq}}{2},\quad  F_{eq} = \frac{N f_{eq}}{2},
$$
which is interpreted respectively as the number of males and females at equilibrium.
Then the latter relation can be rewritten as
$$
	\Sigma^2 \ =  N \left(
	\frac{1}{4 M_{eq}}+\frac{1}{4 F_{eq}}\right) s^2, \ \ \mbox{where $s^2= p(1-p)\frac{n_{eq}}{2} + \alpha (\frac{n_{eq}}{2})^2$},
$$
and where $s^2$ is interpreted as the infinitesimal variance for a single individual chosen uniformly at random in the population.
Finally, we obtain the following expression for the effective population size:
$$
	\frac{1}{N_e} \ = {s^2}\left( \frac{1}{4 M_{eq}}+\frac{1}{4 F_{eq}}\right).
$$
This can be compared to  \cite[Equation (1)]{charlesworth2009effective} for a discrete-time two-sexes Wright-Fisher model.

\subsection{Model of the gut microbiome}
\label{sec:gut}
We can also apply {formally} the previous approach to a model of the gut introduced in  \cite{labavic2022hydrodynamic} that includes hydrodynamic flow of food and bacteria and their interaction.

	{The gut is modeled as a one dimensional segment $[0,L]$. Nutrients are assumed to enter the gut segment at a constant rate, with no initial inflow of bacteria. At the exit, both nutrients and bacteria are freely transported out. The dynamics are governed by a constant flow velocity $v$ and diffusivity constant $D$. Bacteria harvest nutrients, following a Hill-type function
		characterized by a Monod constant $k$.}

As a result, the concentrations of food (or nutrients) $\mathrm{f}$ and bacteria $\mathrm{b}$
are described by a coupled system of stochastic partial differential equations:
\begin{eqnarray*}
	\partial_{t} \mathrm{f} & = & D \partial_{xx} \mathrm{f} - v \partial_{x} \mathrm{f} - \frac{r}{\alpha} \frac{\mathrm{b}\mathrm{f}}{k+\mathrm{f}},  \\
	\partial_t \mathrm{b} & = & D \partial_{xx} \mathrm{b} - v \partial_{x} \mathrm{b}+ r\frac{\mathrm{f}}{k+\mathrm{b}} \mathrm{b}+\sqrt{ \frac{1}{N} \gamma(\mathrm{b},\mathrm{f}) \mathrm{b} } \ \eta,
\end{eqnarray*}
with boundary conditions
\begin{align*}
	-D\partial_{x}[\mathrm{f},\mathrm{b}] + v[\mathrm{f},\mathrm{b}] \ = \ [v \mathrm{f}_{in},0],  \quad \text{at} \quad  x=0, \qquad
	D \partial_{x}[\mathrm{f},\mathrm{b}] \ = \ [0,0],   \quad \text{at }
	\quad x=L,
\end{align*}
and where {$N$ is a large demographic parameter related to the
		carrying capacity of the bacterial population, $\gamma(\mathrm{b},\mathrm{f})$ quantifies the strength
		of demographic fluctuations and
		$\eta$ is a space-time white noise.
		The absence of noise in the $\mathrm{f}$ component stems from the fact that, typically,
		the bacterial concentration is much smaller than the total colon content
		(see Model and Methods in~\cite{labavic2022hydrodynamic}). }

In \cite{labavic2022hydrodynamic}, the authors identified a critical speed

$$
	v^* = \sqrt{\frac{4rD}{\frac{k}{\mathrm{f}_{in}}+1}}
$$
above which bacteria are washed out so that for $v>v^*$ the only stationary solution for the bacterial population is $\tilde h\equiv 0$. In contrast, if $v<v^*$, there exists a non-zero stable stationary solution $\tilde h$. In this case, there also exists a stable non-trivial equilibrium $\tilde{\mathrm{f}}$ for the food concentration.
In the following, we will work under this condition.

To mirror the discrete approach, define the linear differential operator at equilibrium
$$
	{\cal L} u = D\partial_{xx} u -v \partial_x u + r\frac{\tilde{\mathrm{f}}}{k+\tilde h} u  \ \ \mbox{on $(0,L)$}
$$
with boundary conditions
\begin{align}
	D\partial_{x}u - v u \ = \ 0 ,\quad \text{at} \quad  x=0, \qquad
	\partial_{x}u \ = \ 0, \quad  \text{at} \quad        x=L.
	\label{eq:bc}
\end{align}
We can compute the adjoint of the operator by an integration by parts. For any
twice differentiable functions $f,g$ on $(0,L)$ and any $f$ satisfying the
boundary conditions (\ref{eq:bc}), we get

\begin{align}
	\l {\cal L}f,g \r \ = \ \l f,{\tilde{\cal L}}g \r + [Dg \partial_{x} f-D f\partial_{x} g - v f g ]_0^L \label{eq:bc2}
\end{align}
where
$$
	{\tilde{\cal L}} g = D\partial_{xx} g +v \partial_x g + r\frac{\tilde f}{k+\tilde h} g  \ \ \mbox{on $(0,L)$}.
$$
In (\ref{eq:bc2}), the boundary terms in the integration by parts yields
\begin{equation*}
	Df \partial_x g =0, \quad \text{at} \quad  x=0, \quad
	f(D\partial_x g +vg)=0, \quad \text{at} \quad  x=L,
\end{equation*}
where we used the fact that $f$ satisfies (\ref{eq:bc}).
In order to make the operator ${\tilde{\cal L}}$ the adjoint of $\mathcal L$, we need to remove the boundary conditions in the integration by part. Equivalently, this imposes the boundary conditions
\begin{align}
	\partial_x g=0,\quad \text{at} \quad  x=0, \qquad
	D\partial_xg+vg=0\quad    \text{at} \quad  x=L, \label{eq:eq:bc3}
\end{align}
on the differential operator ${\tilde{\cal L}}$. Let us now consider $h$ to be
the solution of the linear problem
$$
	\tilde{{\cal L}} h =  0 \ \ \mbox{on $(0,L)$ with boundary conditions (\ref{eq:eq:bc3})}.
$$
By linearity, solutions are always defined up to a multiplicative constant and we impose the Perron-Frobenius renormalization
$$
	\l \tilde h, h \r=1,
$$
in analogy with the discrete problem. From there, we can compute
\begin{equation*}
	\Sigma^2  =  \int_0^L \gamma(\tilde h, \tilde{\mathrm{f}})(x) \tilde h(x) h^2(x) dx, \qquad \text{ and } \qquad N_e = \frac{N}{\Sigma^2},
\end{equation*}
where we used the fact that ${\eta}$ is a space-time white noise.
	{Typically, the function $\gamma$ is taken to be the per-capita growth rate
		of the bacterial population, $i.e.,$ $\gamma(b,f)=\frac{rf}{k+b}$. With this choice of $\gamma$, the above formula for $\Sigma^2$ coincides with \eqref{eq:variance_continuum}.}

\subsection{F-KPP equation with Allee effect}\label{sect:F-KPP}

Consider the following partial differential equation:
\begin{equation} \label{kpp}
	\partial_t v = D \partial_{xx} v + f(v),
\end{equation}
where $ D > 0 $, $ f : \R_+ \to \R $ is continuously differentiable, $ f(0) = f(1) = 0 $ and $ v : \R_+ \times \R \to [0,1] $, and suppose that this equation admits a travelling wave solution taking the form
\begin{equation*}
	v(t,x) = \tilde{h}(x - ct),
\end{equation*}
where $ \tilde{h} : \R \to [0,1] $ connects the two equilibria 0 and 1, i.e. $ \lim_{x \to -\infty} \tilde{h}(x) = 1 $ and $ \lim_{x \to +\infty} \tilde{h}(x) = 0$, and $ c > 0 $.
Further assume that $ \tilde{h} $ is exponentially stable, meaning that there exists $ \lambda > 0 $ such that, for any $ v_0 $ sufficiently close to $ \tilde{h} $ (in a suitable sense), there exists $ \zeta \in \R $ and $ C > 0 $ such that
\begin{equation*}
	\| v(t, \cdot + ct) - \tilde{h}(\cdot - \zeta) \| \leq C e^{-\lambda t}.
\end{equation*}
This implies that the travelling wave solution $ \tilde{h} $ is a pushed front, see \cite{fife_approach_1977,rothe_convergence_1981}, and that $ c > c_{0} $, where
\begin{equation} \label{def:c0}
	c_{0} = 2 \sqrt{D f'(0)}
\end{equation}
is the speed of the travelling wave associated to the linear equation
\begin{equation*}
	\partial_t v = D \partial_{xx} v + f'(0) v.
\end{equation*}
(Note that if $ f'(0) < 0 $ then $ c_0 $ is not given by \eqref{def:c0} but is negative.)

We now consider the following stochastic perturbation of the PDE \eqref{kpp}, written in the reference frame moving at speed $ c $ to the right,
\begin{equation} \label{stochastic_kpp}
	\partial_t v^N_t = D \partial_{xx} v^N_t + c \partial_x v^N_t + f(v^N_t) + \sqrt{\frac{1}{N} v^N_t g(v^N_t)} \dot{W}(t),
\end{equation}
where $ g : \R_+ \to \R_+ $ is continuously differentiable and $ (W(t), t \geq 0) $ is a cylindrical Wiener process in $ \R $ (i.e. $ \dot{W} $ is space-time white noise).
We can then decompose $ v^N $ as a sum of subfamilies as follows.
Let $ r : \R_+ \to \R $ be such that $ f(v) = r(v) v $.
Then, given $ (u^{k,N}_0, k \in [K]) $ such that $ \sum_{j \in [K]} u^{j,N}_0 = v^N_0 $, let $ (u^{k,N}_t, k \in [K], t \geq 0) $ be a solution to the following system of stochastic partial differential equations
\begin{equation} \label{def:ukN_kpp}
	\partial_t u^{k,N}_t = D \partial_{xx} u^{k,N}_t + c \partial_x u^{k,N}_t + r\left( \sum_{j\in [K]} u^{j,N}_t \right) u^{k,N}_t + \sqrt{\frac{1}{N} u^{k,N}_t g\left( \sum_{j \in [K]} u^{j,N}_t \right)} \dot{W}^k(t), \quad k \in [K],
\end{equation}
where $ (W^k, k \in [K]) $ is an i.i.d.~family of cylindrical Wiener processes on $ \R $.
Then $ \sum_{k \in [K]} u^{k,N} $ is distributed as $ v^N $.

This system is somewhat more complicated than \eqref{eq:SDE_fraction}, first because it is an infinite-dimensional system, and because the associated deterministic dynamics (when $ N \to \infty $) admits a line of equilibria consisting of all possible shifts of the travelling wave profile $ \tilde{h} $.
Rigorously generalising the above results to such a setting is beyond the scope of the present paper, but let us pretend that it can be done and compare the predictions with what has been conjectured in the literature.

In an upcoming paper \cite{etheridge_fluctuations_2025}, it is already proven rigorously that, for a suitable initial condition and a bistable reaction term $ f $, $ v^N_t $ stays close to a shift of the travelling wave profile on time intervals of the form $ [0,NT] $, i.e. there exists a real-valued process $ (\xi^N_t, t \geq 0) $ such that
\begin{equation*}
	\sup_{t \in [0,NT]} \| v^N_{t} - \tilde{h}(\cdot - \xi^N_t) \| \to 0,
\end{equation*}
in probability as $ N \to \infty $.
Moreover, they show that $ (\xi^N_{Nt}, t \geq 0) $ converges in distribution to Brownian motion with constant (negative) drift as $ N \to \infty $.

Combining this with results on the long-time behaviour of neutral fractions in the corresponding deterministic system \cite{roques2012allee}, we can conjecture that the system $ (u^{k,N}_t, k \in [K], t \geq 0) $ stays close to a convex combination of a shift of the travelling wave profile, i.e. there exists $ (\theta^{k,N}(t), k \in [K], t \geq 0) $ taking values in $ \lbrace (\theta_k)_{k \in [K]} \in [0,1]^{K} : \sum_{k \in [K]} \theta_k = 1 \rbrace $ such that
\begin{equation}
	\label{eq:conj_fkpp}
	\sup_{t \in [0, NT]} \sum_{k \in [K]} \| u^{k,N}_t - \theta^{k,N}(t) \tilde{h}(\cdot - \xi^N_t) \| \to 0
\end{equation}
as $ N \to \infty $.

Let us now present what a generalisation of Theorem~\ref{th:main_result} would lead us to expect.
Let $ \mathcal{L} $ denote the second-order differential operator
\begin{equation*}
	\mathcal{L}v = D \partial_{xx} v + c \partial_x v + r(\tilde{h}) v.
\end{equation*}
Then $ \mathcal{L}\tilde{h} = 0 $ by the definition of $ \tilde{h} $.
Let $ h : \R \to \R_+ $ be such that $ \mathcal{L}^* h = 0 $ and $ \langle h, \tilde{h} \rangle = 1 $.
In this case, $ h $ can be written in terms of $ \tilde{h} $ as follows
\begin{equation} \label{expression_h}
	h(x) = \frac{\tilde{h}(x) e^{\frac{c x}{D}}}{\int_\R \tilde{h}(y)^2 e^{\frac{c y}{D}} dy}.
\end{equation}
The expected generalisation of Theorem~\ref{th:main_result} is the following.
As $ N \to \infty $, $ (\theta^{k,N}(Nt / \Sigma^2), t \geq 0) $ converges in distribution to a standard Wright-Fisher diffusion, where
\begin{equation*}
	\Sigma^2 = \int_\R h(x)^2 \tilde{h}(x) g(\tilde{h}(x)) dx.
\end{equation*}
{As in \Cref{sec:gut}, $g$ is typically taken as the per-capita growth rate $r$, so that we
recover \eqref{eq:variance_continuum}, where \(\tilde r = r \circ \tilde h\) is
interpreted as the local reproduction rate at equilibrium.}
Substituting \eqref{expression_h} in the above expression yields
\begin{equation*}
	\Sigma^2 = \frac{\int_\R \tilde{h}(x)^3 g(\tilde{h}(x)) e^{2 \frac{cx}{D}} dx}{\left( \int_\R \tilde{h}(x)^2 e^{\frac{cx}{D}} dx \right)^2}.
\end{equation*}
Note that this is exactly what was conjectured in \cite{birzu_fluctuations_2018} (Equation 5) for the rate of decrease of heterozygosity in fully pushed fronts.
Moreover, in \cite{schertzer2023spectral}, an analogue of the quantity $\Sigma^2$ was derived for a branching particle system interpreted as
a model of FKPP fronts. In this framework, it was shown that the
genealogy of the particle system converges to a Brownian Coalescent Point process on the timescale $N/\Sigma^2$.
The quantity $ N / \Sigma^2 $ is also close to the expected coalescence time of pairs of lineages in an individual-based model approximating the above SPDE studied in \cite{etheridge_genealogies_2022} (in which it was shown that the genealogy actually converges to a Kingman coalescent after a suitable time scaling).
The exact factor appearing in \cite{etheridge_genealogies_2022} (Theorem~1.2) differs in that it does not involve the term $ g(\tilde{h}(x)) $, and this comes from the choice we have made when decomposing the total population density $ v^N $ in neutral fractions $ u^{k,N} $ in \eqref{def:ukN_kpp}.
Generalising Theorem~\ref{th:main_result} to this infinite-dimensional setting would thus provide a rigorous argument for the convergence of genealogies inside such stochastic travelling waves to a Kingman coalescent.

We can determine the conditions under which $ \Sigma^2 < \infty $ by computing the exponential rate of decay of $ \tilde{h} $ at $ +\infty $.
Solving the linearised equation of the travelling wave near $ v = 0 $, we obtain that $ \tilde{h}(x) \propto e^{-\lambda x} $ as $ x \to +\infty $ where
\begin{align*}
	\lambda & = \frac{c}{2D} \left( 1 + \sqrt{1 - \frac{4 D r(0)}{c^2}} \right)
	= \frac{c}{2D} \left( 1 + \sqrt{1 - \left( \frac{c_0}{c} \right)^2} \right),
\end{align*}
using \eqref{def:c0}.
We obtain that $ \Sigma^2 < \infty $ as long as $ \frac{c}{D} < \frac{3 \lambda}{2} $, which holds if and only if
\begin{equation*}
	c > \frac{3}{2 \sqrt{2}} c_0.
\end{equation*}
This condition was obtained in \cite{birzu_fluctuations_2018} (Equation 7), and defines what the authors called \textit{fully pushed fronts}.
When $ c \in \left( c_0, \frac{3}{2\sqrt{2}} c_0 \right) $, the authors say that the front is \textit{semi-pushed}.
In that case, the genealogy no longer converges to a Kingman coalescent, and is expected to converge to a Beta coalescent instead, on a different time scale, see \cite{tourniaire2024branching,foutel2024convergence}.
As a result, we can expect that $ (\theta^{k,N}(t), t \geq 0) $ converges on the same time scale to a $ \Lambda $-Fleming-Viot process which is the moment dual of the corresponding Beta coalescent.

\section{Non-uniqueness of neutral fractions}
\label{sect:unicity}

It is easy to see that, even if the SDE \eqref{eq:SDE_tot_pop} possesses the $\infty$-decomposable property, the decomposition is not necessarily unique.
To illustrate this, consider the case $E= \{1,2\}$ and
\[
	b(v) =
	\begin{pmatrix}
		b_0 v_1 + b_{11} (v_1)^2 + b_{12} v_1v_2 \\
		b_0 v_2 + b_{22} (v_2)^2 + b_{21} v_1v_2
	\end{pmatrix},
\]
with $b_0$, $b_{ij} \in \mathbb{R}$ fixed constants, so that the deterministic component of the model corresponds
with a Lotka-Volterra model. For any parameters $\alpha, \beta > 0$, the drift $b$ can be decomposed as
\[
	b(v) = F_{\alpha,\beta}(v) v,
\]
where
\begin{equation}
	\label{eq:ex_F}
	F_{\alpha,\beta}(v_1,v_2) =
	\begin{pmatrix}
		b_0 + b_{11} v_1 + (b_{12} - \alpha) v_2 & \alpha v_1                           \\
		\beta v_2                                & b_0 + b_{22} v_2 + (b_{21}-\beta)v_1
	\end{pmatrix}.
\end{equation}

This defines a two-parameter family of decompositions corresponding to the same diffusion process. Importantly, the fixed point  $\tilde{h}$  is \emph{independent} of the choice of $\alpha$ and $\beta$, while the {right-eigenvector} $h\equiv h_{\alpha,\beta}$ of $F_{\alpha,\beta}(\tilde{h})$ \emph{does} depend on these parameters.
According to (\ref{eq:Ne}),
this leads to a definition of the {effective population size}
\[
	N_e = \frac{N}{\langle h_{\alpha,\beta},\, a(\tilde{h}) a^*(\tilde{h})\, h_{\alpha,\beta} \rangle},
\]
which depends on the choice of $F_{\alpha,\beta}$, but {not} on how the diffusion coefficient $a$ is decomposed.

Naturally, this raises the question: \emph{What is the most natural choice of $\alpha$ and $\beta$?} The answer depends on the underlying microscopic model.
To explore this, we consider the generic two-type model introduced in Section~\ref{sect:asexual-populations}, with $E = \{1, 2\}$, and microscopic transition rates $\rho_{\alpha,\beta}$ chosen such that
\[
	F_{\alpha,\beta}({\bf V})_{kl}
	= \sum_{{\bf n} \in \mathbb{N}_0^2} \rho_{\alpha,\beta}(k, {\bf V}, {\bf n}) \left( {\bf n}_l - \delta_{kl} \right), \quad k,l \in \{1, 2\}.
\]
With this specification, the discrete generator $G_{\alpha,\beta}^N f$ converges to a limiting generator $G f$, which is {independent} of the parameters $\alpha$ and $\beta$. However, an analogous analysis for the generator of the {neutral fractions} shows that, up to an error of order $o(1/N)$, the resulting generator coincides with that of the limiting neutral dynamics~(\ref{eq:SDE_fraction}) associated with $F_{\alpha,\beta}$.
Thus, while the macroscopic dynamics are independent of $\alpha$ and $\beta$, the neutral fraction dynamics retain a memory of this choice, highlighting the subtle role of microscopic structure in shaping neutral genealogies.
We refer the reader to \cite[Section 3.4]{etheridge2023looking} for an analogous discussion
for the lineage dynamics in the backward in time approach.

\section{Consistence and exchangeability}
\label{sect:neutral}

Let $(P_t^K; t\geq 0)$ be the semigroup associated to the neutral fraction of order $K$. From the $\infty$-decomposability property stated in Assumption~\ref{hyp:decomposability}, it is straightforward to check that the sequence $(P^K)_K$ is consistent and exchangeable in the following sense.
\begin{enumerate}
	\item {\bf Consistent}. Define
	      $$
		      \forall (z_i)_{i=1}^K \in (\R^E)^K, \quad \ m^K(z_1,\cdots, z_K) \ = \ (z_1,\cdots, z_{K-2}, z_{K-1}+z_{K})
	      $$
	      Then, for any $ f : (\R^E)^{(K-1)} \to \R $, $$P_t^{K}(f\circ m^K) \ = \ P_t^{K-1}(f)\circ m^K.$$
	\item {\bf Exchangeable}. For every permutation $p : (\R^E)^K \to (\R^E)^K$ and any $ f : (\R^E)^{K} \to \R $,
	      $$
		      P^{K}_t(f\circ p) \ =\ P^{K}_t(f)\circ p.
	      $$
\end{enumerate}
Note that those two properties were enforced by the semilinear structure of the
drift and covariance of our SDE in Assumption~\ref{hyp:decomposability}.

This raises the following question. Consider a sequence of semi-groups $(P^K)$ such that $P^1$ is the semi-group associated to an SDE of the form (\ref{eq:SDE_tot_pop}). If we assume that $(P^K)$ is consistent and exchangeable, what can be said about the algebraic structure of the coefficients in (\ref{eq:SDE_tot_pop})?
We defer this question to future work.

\section{Proof of Theorem~\ref{th:main_result}}
\label{sec:proof}

\subsection{Notation}
For $u\in\M$, we write $u^j\in \R^E$ for its $j$-th column and we denote
by $u_{xj}$ its entry corresponding to `site' $x$ in the $j$-th column
(or fraction).
We write $ (e_x)_{x\in E}$ for the canonical basis of $ \R^E $.
By an abuse of notation, we will use $\|\cdot \|$
to denote the Euclidean norm on $\R^E$, the Euclidian norm on $\R^K$, and
for the induced matrix norm on $\M$ and on $\R^{E\times E}$.
	{Throughout the paper, $M$ and $\gamma$ denote positive constants whose values
		may change from line to
		line. Numbered constants keep the same value throughout the text.}
To ease notation, we will write $\S$ for the linear map that sums the
columns of an element of $\M$, that is,
\begin{equation*}
	\S: \M \to \Rv, \quad u\mapsto \S(u)=\sum_ju^j,
\end{equation*}
and define
\begin{equation}
	\label{eq:defSk}
	\tS^k:  \M \to  \Rv\times \Rv, \quad
	u\mapsto \tS^k(u)=(\S(u),u^k).
\end{equation}
With this notation, the system of $|E|$-dimensional SDEs \eqref{eq:SDE_fraction} can then be reformulated
as a \textit{matrix-valued SDE}
\begin{equation}
	\label{eq:matrix_valued_SDE}
	dU^N_t=F(\S(U^N_t))U_t^N dt+\frac{1}{\sqrt{N}}\G(U^N_t)\cdot d\W_t,
\end{equation}
where $U_t=(U_t^1,...,U_t^K)$ refers to the composition matrix,
$\W_t=(W_t^1,...,W_t^K)$ is the noise matrix (the $(W^i)$ are those defined in
\eqref{eq:SDE_fraction}), $\G$ is the matrix indexed by pairs of indices
such that
\begin{eqnarray}
	\label{eq:def_G}
	\G(u)_{xj,yk}=\delta_{jk}\sigma(\tS^k(u))_{xy}, \quad x,y \in E, \,  \; j, k \in [K],
\end{eqnarray}
and the product $\cdot$ is defined by
\begin{equation}
	\label{eq:prod_matrix}
	[\G(u)\cdot d\W_t]_{xj}=\sum_{\underset{k\in[K]}{y\in E}}\G(u)_{xj,yk}(d\W_t)_{yk}
	= \sum_{y\in E}\sigma(\S(u),u^j)_{xy}(d\W_t^j)_y
	=(\sigma(\S(u),u^j) d\W_t^j)_x.
\end{equation}
(In words, the above quantity corresponds to the noise component affecting
the $j$-th family at site $x$.)
We write $\lf\cdot,\cdot\rf$ for the associated scalar product in $\R^{E\times K}$,
	that is, the Frobenius inner product.

Under Assumption~\ref{hyp:decomposability},
if $U^N$ denotes the unique weak solution to \eqref{eq:matrix_valued_SDE},
$\S(U^N)$ is the unique weak solution of~\eqref{eq:SDE_tot_pop}, and corresponds to the total population size of the system.

The limiting objects obtained in Theorem \ref{th:main_result} can be rewritten in
terms of the limits of deterministic flows that we now introduce.
Consider the deterministic flow associated to the SDE \eqref{eq:SDE_tot_pop} defined  as
\begin{align}\label{eq: detflow}
	\phi: \R_+\times\Rv\to \Rv, \quad \phi(t,v_0) := v_t,
\end{align}
where $v_t$ is the unique solution of~$\dot{v}_t=b(v_t)$ with initial
condition $v_0$. Note that, under Assumption~\ref{A2},
for $v_0\in\B$,
\begin{equation}
	\label{eq:lim_flow}
	\lim_{t\to\infty}\phi(t,v_0)=\tilde h.
\end{equation}
We further define
\begin{align}
	\label{eq:detFlow}
	\varphi:\R_+\times\Rv\times \Rv \to \Rv,
	\quad \varphi(t,v_0,w_0)=w_t,
\end{align}
where $w_t$ is the unique solution of the linear ODE
\begin{equation}
	\label{eq:flot_u}
	\frac{d}{dt}w_t=F(\phi(t,v_0))w_t
\end{equation}
with initial condition $w_0$, and let $\Bs$
be the basin of attraction of the manifold $\Gamma^K$, that is,
\begin{equation}
	\label{eq:BSt}
	\Bs=\{u_0\in \Mp:  \S(u_0)\in\mathcal{B}\}.
\end{equation}
For $j\in [K]$ and $u_0\in\Bs$, we write
\begin{equation}
	\label{eq:def_Katz}
	\pi^j(u_0)=\lim_{t\to\infty}
	\varphi(t,\S(u_0),u_0^j).
\end{equation}

\begin{definition}
	For a given composition matrix $u_0\in \Bs$, the matrix $(\pi^1(u_0),...,\pi^K(u_0))$ is referred to as the Katzenberger projection of $u_0$.
\end{definition}
We conclude this section by recalling that under Assumptions~\ref{A1} and \ref{A3},
\begin{equation}
	\label{eq:PR_dec}
	\forall t\in[0,\infty), \quad
	e^{tF(\tilde h)}=P+R_t,
\end{equation}
where $P$ is the Perron-Frobenius projector defined by
\begin{equation}
	\forall w\in \Rv,  \quad Pw=\l w,h\r\tilde h,
	\label{eq:pf_proj}
\end{equation}
and that there exist constants $M_1,\gamma_1>0$ such that
\[
	\forall t\in[0,\infty), \quad \|R_t\|\leq M_1e^{-\gamma_1 t}.
\]
On the other hand, Assumption~\ref{A2} ensures the existence of constants $M_2,\gamma_2>0$ such that,
\begin{equation}
	\label{eq:J_dec}
	\forall t\in [0,\infty), \quad \|e^{tJ}\|\leq M_2e^{-\gamma_2 t}.
\end{equation}
\begin{definition}
	\label{def:pf}
	{For a given composition matrix $u_0\in \Bs$, the matrix $(P(u_0^1),...,P(u_0^K))$ is referred to as the Perron-Frobenius projection of $u_0$.}
\end{definition}

\subsection{Two approaches to Katzenberger’s method for $\infty$-decomposable SDEs}

In the present work, we provide two distinct proofs of Theorem~\ref{th:main_result} that we now explain
and motivate. Both proofs rely on the fact that, under our decomposability condition,
Katzenberger projections can be expressed in terms of \textit{reproductive values}
(that is, the respective mean long-term contributions of
type-$x$ individuals).
In standard branching population models, such as multitype critical Galton-Watson processes,
the reproductive values are derived from the Perron-Frobenius decomposition of the
semigroup; and are given by the left eigenvector of the
mean offsrping matrix (see e.g.,~\cite{athreya2012branching}). Under
the decomposability assumption (Assumption~\ref{hyp:decomposability}) and the
Perron-Frobenius type condition (Assumption~\ref{A3}), we have a similar
interpretation of the eigenvector $h$, as we now describe.

Consider the deterministic component of~\eqref{eq:SDE_tot_pop},
$
	\dot{v}_t = F(v_t) v_t.
$
If we freeze the mean matrix at equilibrium and assume that $v_t$ only deviates
from equilibrium by a small perturbation $w_t$ then we obtain the linearised problem
\[
	\frac{d}{dt} w_t = F(\tilde h) w_t
\]
with initial condition $w_0$. This system can be viewed as describing the  evolution
of small families at the deterministic equilibrium, where their growth follows
a branching process. We then interpret $h$, the left
eigenvector of $F(\tilde h)$, as the reproductive value at the deterministic equilibrium.
Indeed, in the above linearised system, the long-term contribution
of such a family is given by the Perron-Frobenius projection~\eqref{eq:pf_proj}; that is, a scalar product against the reproductive value:
\[
	\lim_{t\to\infty} w_t = \l h,w_0\r\tilde h.
\]
In general, the deterministic component of the dynamics of a subpopulation depends on the total population size:
\[
	\frac{d}{dt} u_t^k = F(S(u_t)) u_t^k,
\]
with initial condition $u_0$, $u_0^k$. Now the size of the subpopulation in the limit is given by the Katzenberger projection of $u_0$~\eqref{eq:def_Katz}. In Proposition~\ref{prop:scalar_form} we show that, analogously to the Perron-Frobenius projection, the Katzenberger projection is represented by a scalar product, so that in this general case we have
\[
	\lim_{t\to\infty}u_t^k = \l H(S(u_0)),u_0^k\r\tilde h,
\]
where $H:\R^E\to\R^E$ is interpreted as the reproductive value (which depends on the total population size) in a regulated population. Proposition~\ref{prop:scalar_form} will also prove that on the invariant manifold $\Gamma^K$, the Perron-Frobenius and Katzenberger reproductive values and therefore the corresponding projections agree.

	{\bf First approach {(via Katzenberger's method)}.}
The first proof relies directly on a general result of Katzenberger~\cite{katzenberger1990solutions} restated in Appendix~\ref{sec:katzenberger}.
This result
states that the process \( (U^N_{{tN}/{\Sigma^2}},\ t \in [\varepsilon,T]) \) converges in distribution to a process taking values in \( \Gamma^K \), and characterises the limit as the solution to a stochastic differential equation (SDE) involving the first and second derivatives of the Katzenberger projections \( \pi^j \).
We show that the specific structure of our infinitely decomposable SDE allows simplifications in the limiting equation, leading to the recovery of the multi-dimensional Wright--Fisher diffusion. This argument is developed in Section~\ref{subsec:main_proof}.

While broadly applicable, the arguments underlying Katzenberger's theorem are technically involved and difficult to interpret, making their extension to infinite-dimensional settings, such as the PDEs discussed earlier in Section \ref{sec:IBM}, unclear.

\textbf{Second approach  {(using Perron-Frobenius reproductive values)}.}
To address this, we present a second proof which, although less general, is tailored to the $\infty$-decomposable setting.
While
still relying on the crucial cancellations in Itô’s formula that underpin Katzenberger’s analysis, this proof is inspired by the geometric interpretation introduced by Parsons and Rogers~\cite{parsons2017dimension}.

In this proof, we rely on the fact that, in a small neighbourhood of the stable manifold,
the Katzenberger reproductive value $H(\cdot)$ is well approximated by the Perron–Frobenius
reproductive value $h$.
This approach provides sharper quantitative bounds on convergence
to the stable manifold, bounds that do not rely on Lyapunov-type arguments as in~\cite{katzenberger1990solutions} and~\cite{parsons2017dimension}.

This alternative approach, which is presented in Section~\ref{sec:2nd_proof},
is not a mere exercise of simplification,
but develops a tractable geometric methodology that lays the groundwork for
extending the analysis to the infinite-dimensional setting,
both in terms of the number of families (Fleming--Viot processes) and the
complexity of the type space~$E$. In particular, it paves the way for applying our results to the PDEs introduced in Sections~\ref{sec:gut} and \ref{sect:F-KPP}.

\subsection{Proof of Theorem~\ref{th:main_result} via Katzenberger's approach} \label{subsec:main_proof}

To apply Katzenberger's result (Theorem~\ref{thm:katzenberger} below), we identify
$\Mp$
with $ \R_{\geq 0}^d $ with $ d = | E |\times K  $.
We nonetheless continue to index vectors by $ \lbrace xk, x \in E, k \in [K] \rbrace $,
and, for $ u \in \R_{\geq 0}^d $, we let $ u^k $ denote the element of $ \Rdp $
corresponding to the $ k $-th column of the corresponding element of $\Mp$.
The $\cdot$~product  can then be identified with the usual matrix multiplication in
$\R^{d\times d}$, and $ \mathbf{G}(u) $ can be seen as an element of $ \R^{d\times d} $.
For a twice continuously differentiable real-valued function
$ f : U \subset \R^d \to \R $, we let $ \nabla f : U \to \R^d $ (resp.~$ \nabla^2 f : U \to \R^{d\times d} $) denote its gradient (resp.~Hessian).

Before proving Theorem~\ref{th:main_result}, we establish several properties of
the Katzenberger projections in the infinitely decomposable setting.
In particular, the identities satified by the first and second order derivatives of the Katzenberger projections
derived in Lemma~\ref{lemma:katzenberger_projections}
are essential for obtaining the limiting Wright--Fisher diffusion.

\begin{lem} \label{lemma:katzenberger_projections}
	For all $ u \in \mathcal{B}^* $, $ \pi^k(u) $ can be written as
	\begin{equation} \label{def_theta}
		\pi^k(u) = \theta^k(u) \tilde{h},\qquad \theta^k(u) \in \R_{\!\scriptscriptstyle\geq 0}.
	\end{equation}
	For each $k\in [K]$, the map $ \theta^k : \mathcal{B}^* \to \R $ is thrice continuously differentiable.

	For $u\in \Bs$ and $k,k'\in[K]$, define
	\begin{equation}
		\label{eq:def_Cov}
		{\C}_{k,k'}(u):=\lf   \mathbf{G}(u) \cdot \nabla \theta^k(u), \mathbf{G}(u) \cdot \nabla \theta^{k'}(u) \rf
		\quad \text{and}
		\quad
		\C_{k,k'}^{WF}(u):=(\mathbf{1}_{k=k'} - \theta^{k'}(u)) \theta^k(u).
	\end{equation}
	Then,  for all $ u \in \Gamma^K$ and $k,k'\in[K]$,
	\begin{equation} \label{covariance_theta_k}
		\C_{k,k'}(u)= \Sigma^2  \C_{k,k'}^{WF}(u),
	\end{equation}
	where $\Sigma^2$ is given by~\eqref{eq:Sigma-ancestral},
	and
	\begin{equation} \label{Trace_on_Gamma}
		\mathrm{Tr}\left[ \mathbf{G}(u)^* \cdot \nabla^2 \theta^k(u) \cdot \mathbf{G}(u) \right] = 0.
	\end{equation}
\end{lem}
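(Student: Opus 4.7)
The plan is to exploit the scalar representation of the Katzenberger projection established in Proposition~\ref{prop:scalar_form}, namely
$$\theta^k(u) = \l H(\S(u)), u^k \r$$
for some $\mathcal{C}^3$ map $H$ defined on a neighbourhood of $\tilde h$ with $H(\tilde h) = h$. From this representation, $\pi^k(u) = \theta^k(u)\tilde h$ is immediate; non-negativity of $\theta^k(u)$ follows because the Metzler property of $F$ (Assumption~\ref{A:regularity}-3) preserves $\Rdp$ along the linear flow $\varphi(\cdot, \S(u), u^k)$; and $\theta^k \in \mathcal{C}^3$ is inherited from the regularity of $H$. An essential preliminary is the normalization $\sum_k \theta^k \equiv 1$ on $\Bs$: summing the linear ODEs defining $\varphi(\cdot, \S(u), u^k)$ yields $\sum_k \varphi(t, \S(u), u^k) = \phi(t, \S(u))$, which converges to $\tilde h$ by Assumption~\ref{A2}. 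Equivalently, $\l H(v), v\r \equiv 1$ in a neighbourhood of $\tilde h$.

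Next I would compute $\nabla \theta^k$ on $\Gamma^K$. Differentiating the scalar form yields
$$\partial_{yj} \theta^k(u) = [DH(\S(u))^* u^k]_y + \delta_{jk}\, H(\S(u))_y,$$
and restricting to $u \in \Gamma^K$ — so that $\S(u) = \tilde h$, $u^k = \theta^k(u)\tilde h$ and $H(\tilde h) = h$ — this simplifies to $\nabla_j \theta^k(u) = \theta^k(u)\, DH(\tilde h)^* \tilde h + \delta_{jk} h$. Differentiating the identity $\l H(v), v\r \equiv 1$ at $v = \tilde h$ forces $DH(\tilde h)^* \tilde h = -h$, so
$$\nabla_j \theta^k(u) = (\delta_{jk} - \theta^k(u))\, h \quad \text{on } \Gamma^K.$$
The covariance identity then follows by inserting this gradient into the definition of $\C_{k,k'}$, using the block-diagonal structure of $\mathbf{G}(u)$ (with $j$-th block $\sigma(\tS^j(u))$), and invoking $\sigma(\tilde h, \theta^j \tilde h)\,\sigma(\tilde h, \theta^j \tilde h)^* = \theta^j\, a(\tilde h)a(\tilde h)^*$ from Assumption~\ref{hyp:decomposability}. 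The computation reduces to the combinatorial identity
$$\sum_j \theta^j (\delta_{jk} - \theta^k)(\delta_{jk'} - \theta^{k'}) = \delta_{kk'}\theta^k - \theta^k \theta^{k'} = \C^{WF}_{k,k'}(u),$$
which uses only $\sum_j \theta^j = 1$, establishing \eqref{covariance_theta_k}.

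For the trace identity, I would differentiate the scalar form twice. On $\Gamma^K$ one obtains
$$\partial^2_{xi, yj} \theta^k(u) = \theta^k(u)\, T_{xy} + \delta_{ik}\, [DH(\tilde h)]_{xy} + \delta_{jk}\, [DH(\tilde h)]_{yx},$$
where $T_{xy} := \sum_z \tilde h_z\, \partial_x \partial_y H_z(\tilde h)$. By cyclicity of the trace, $\mathrm{Tr}[\mathbf{G}^* \nabla^2\theta^k\, \mathbf{G}] = \mathrm{Tr}[\nabla^2\theta^k\, \mathbf{G}\mathbf{G}^*]$; the block-diagonality of $\mathbf{G}\mathbf{G}^*$ (diagonal blocks equal to $\theta^j\, a(\tilde h)a(\tilde h)^*$ on $\Gamma^K$) together with $\sum_j \theta^j = 1$ reduces this trace to
$$\theta^k \sum_{x, y} [a(\tilde h)a(\tilde h)^*]_{yx}\, \bigl( T_{xy} + [DH(\tilde h)]_{xy} + [DH(\tilde h)]_{yx} \bigr).$$
The final ingredient comes from differentiating $\l H(v), v\r \equiv 1$ \emph{twice} at $v = \tilde h$: this yields the identity $T_{xy} = -[DH(\tilde h)]_{xy} - [DH(\tilde h)]_{yx}$, so the parenthesized expression vanishes pointwise and \eqref{Trace_on_Gamma} follows.

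The main obstacle really lies in Proposition~\ref{prop:scalar_form} together with the induced normalization $\l H(v), v\r \equiv 1$; once those are in hand, both identities follow from systematic differentiation of the normalization combined with the block-diagonal structure of $\mathbf{G}$. The trace identity is the most delicate: it depends on a precise second-order cancellation between the $D^2 H(\tilde h)\cdot\tilde h$ contribution and the $DH(\tilde h)$ terms, but this cancellation is forced by the normalization alone — and is therefore independent of the particular decomposition $(F,\sigma)$ — reflecting the rigidity built into the $\infty$-decomposable framework.
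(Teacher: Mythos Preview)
Your proposal is correct and follows essentially the same route as the paper's own proof: both rely on the scalar representation $\theta^k(u)=\langle H(\S(u)),u^k\rangle$ from Proposition~\ref{prop:scalar_form}, differentiate the normalization $\langle H(v),v\rangle\equiv 1$ once and twice (the paper records these as \eqref{conservation_1} and \eqref{conservation_2}), and combine the resulting identities with the block-diagonal structure of $\mathbf{G}\mathbf{G}^*$ on $\Gamma^K$. The only cosmetic difference is that the paper keeps the Hessian computation valid for general $u\in\Bs$ before specializing to $\Gamma^K$ (obtaining a difference of two $D^2H$ terms that visibly cancel when $u^k=\theta^k\tilde h$), whereas you restrict to $\Gamma^K$ from the outset and invoke the second-derivative identity $T_{xy}+[DH(\tilde h)]_{xy}+[DH(\tilde h)]_{yx}=0$ directly; the two arguments are equivalent.
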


We prove Lemma~\ref{lemma:katzenberger_projections} in Section~\ref{sec:proof_lemma5} below.
For now, let us show how to deduce Theorem~\ref{th:main_result} from this.

\begin{proof}[Proof of Theorem~\ref{th:main_result}]
	To prove Theorem~\ref{th:main_result}, we consider the open set $ \mathcal{U} = \mathcal{B}^* $ and the flow $ \mathbf{F} $ defined on $ \mathcal{U} $ as
	\begin{equation*}
		\mathbf{F}(u)_{xk} := [F(\mathcal{S}(u)) u^k ]_x.
	\end{equation*}
	It then follows from the definition of $ \mathcal{B}^* $ that
	\begin{equation*}
		\Gamma = \mathbf{F}^{-1}(0) = \lbrace u \in \mathcal{B}^* : \mathcal{S}(u) = \tilde{h} \rbrace = \Gamma^K,
	\end{equation*}
	which is a submanifold of $ \mathcal{U} $ of dimension $ m = K-1 $\footnote{{Note that this manifold is not $\mathcal{C}^2$ at its corners. Katzenberger’s approach addresses this technical issue by stopping the diffusion upon exiting a small ball centered at an interior point of the manifold}.}.
	The flow $ \psi(\cdot,t) $ defined in \eqref{def_flow_katz} is then such that
	\begin{equation*}
		\psi(u,t)^k = \varphi(t,\mathcal{S}(u), u^k),
	\end{equation*}
	and the limit $ \Phi : \mathcal{B}^* \to \Gamma $ is given by
	\begin{equation*}
		\Phi(u) = (\pi^1(u), \ldots, \pi^K(u)) = (\theta^1(u) \tilde{h}, \ldots, \theta^K(u) \tilde{h}).
	\end{equation*}
	The matrix-valued SDE \eqref{eq:matrix_valued_SDE} can then be written under the form \eqref{katz:sde} with $ X_N(t) := U^{N}_{Nt} $, $ Z_N = \mathbf{W} $, $ A_N(t) = Nt $ and $ \mathbf{G}_N = \mathbf{G} $.
	All the assumptions on $ Z_N $ and $ A_N $ are then trivially satisfied.

	We next check the assumption on the eigenvalues of the Jacobian matrix of $ \mathbf{F} $ on the submanifold $ \Gamma^K $.
	This follows by noting that
	\begin{equation*}
		\mathcal{S}(\mathbf{F}(u)) = F(\mathcal{S}(u)) \mathcal{S}(u) = b(\mathcal{S}(u)).
	\end{equation*}
	By the linearity of $ \mathcal{S} $, we see that, if $ J_{\mathbf{F}}(u) $
	denotes the Jacobian matrix of $ \mathbf{F} $ at $ u \in \Gamma^K $,
	\begin{equation*}
		\mathcal{S}(J_{\mathbf{F}}(u) \cdot X) = J_b(\tilde{h}) \mathcal{S}(X).
	\end{equation*}
	{It follows that the spectrum of $ J_{\mathbf{F}}(u) $ is included in
	that of $(J_b(\tilde{h})) \cup \lbrace 0 \rbrace $, and that the eigenvectors associated to 0 are such that $ \mathcal{S}(X) = 0 $.
	If we now take $ u \in \Gamma $ and $ X \in \R^d $ such that $ \mathcal{S}(X) = 0 $, we see that
	\begin{equation*}
		[\mathbf{F}(u + X) - \mathbf{F}(u)]^k = F(\tilde{h}) X^k.
	\end{equation*}
	Hence, for such $ u $ and $ X $,
	\begin{equation*}
		[J_{\mathbf{F}}(u) \cdot X]^k = F(\tilde{h}) X^k.
	\end{equation*}
	By Assumption~\ref{A3}, we conclude that $ J_{\mathbf{F}}(u) \cdot X = 0 $ if and only if $ \mathcal{S}(X) = 0 $ and $ X^k $ is colinear to $ \tilde{h} $ for all $ k \in [K] $.
	The dimension of the resulting eigenspace (associated to the eigenvalue 0) is thus exactly $ m = K-1 $.}
	It then follows from Assumption~\ref{A2} that the remaining $ d-m $ eigenvalues all have negative real parts.

	The regularity of $ \Phi $ on $ \mathcal{U}_\Gamma = \mathcal{B}^* $ follows from Lemma~\ref{lemma:katzenberger_projections}.
	We then consider a compact set $ K \subset \mathcal{B}^* $ such that $ \Gamma^K \subset \mathring{K} $ (this is possible by Assumption~\ref{A2}).
	Let us then define $ Y_N $ and $ \mu_N(K) $ as in \eqref{def:Yn}.
	Theorem~\ref{thm:katzenberger} then yields that the sequence $ (Y_N^{\mu_N(K)}, Z_N^{\mu_N(K)}, \mu_N(K)) $ is tight in $ D([0,T], \R_+^d \times \R^d) \times [0,\infty] $  and that the limit of any converging subsequence satisfies \eqref{katz_limiting_sde}.
	By the remark following Theorem~\ref{thm:katzenberger}, we also obtain that $ (X_N^{\mu_N(K)}, Z_N^{\mu_N(K)}, \mu_N(K)) $ is tight in $ D([\varepsilon,T], \R_+^d \times \R^d) \times [0,\infty] $ and that the limit of any converging subsequence also satisfies \eqref{katz_limiting_sde}.
	By the choice of $ K $ and \eqref{mu_liminf}, $ \mu = +\infty $ almost surely since $ Y(t) \in \Gamma $ for all $ t \geq 0 $.
	As a result, $ (X_N^{\mu_N(K)}, Z_N^{\mu_N(K)}, \mu_N(K)) $ also converges in distribution along any converging subsequence of $ (Y_N^{\mu_N(K)}, Z_N^{\mu_N(K)}, \mu_N(K)) $.

	To conclude, we compute the terms appearing in \eqref{katz_limiting_sde}.
	Thus let $ (Y,Z,\mu) $ denote the limit of a converging subsequence of $ (Y_N, Z_N, \mu_N)$.
	First note that, since $ Z_N = \mathbf{W} $ for all $ N $, then $ Z $ is also distributed as standard Brownian motion.
	Let $ (\theta^k(t), k \in [K]) $ be such that
	\begin{equation*}
		Y(t) = (\theta^1(t) \tilde{h}, \ldots, \theta^K(t) \tilde{h}).
	\end{equation*}
	Then \eqref{katz_limiting_sde} translates to
	\begin{equation} \label{sde_theta_k}
		\theta^k(t) = \theta^k(0) + \int_{0}^{t} \lf \nabla \theta^k(Y(s)) , \mathbf{G}(Y(s))  \cdot  d\mathbf{W}_s \rf+ \frac{1}{2} \int_{0}^{t} \mathrm{Tr} \left[ \mathbf{G}(Y(s))^* \cdot \nabla^2 \theta^k(Y(s)) \cdot \mathbf{G}(Y(s)) \right] ds.
	\end{equation}
	Since $ Y(s) \in \Gamma^K $, \eqref{Trace_on_Gamma} in Lemma~\ref{lemma:katzenberger_projections} implies that the second term on the right of \eqref{sde_theta_k} is equal to zero.
	We thus obtain that $ (\theta^1(t), \ldots, \theta^K(t))_{t \geq 0} $ is a continuous martingale (with respect to the filtration associated to $ (\mathbf{W}_t, t \geq 0) $) with quadratic variation process
	\begin{equation*}
		\left[ \theta^k, \theta^{k'} \right]_t = \int_{0}^{t} \nabla \theta^k(Y(s))^* \cdot \mathbf{G}(Y(s))^* \cdot \mathbf{G}(Y(s)) \cdot \nabla \theta^{k'}(Y(s)) ds.
	\end{equation*}
	Using \eqref{covariance_theta_k} from Lemma~\ref{lemma:katzenberger_projections} and the fact that $ Y(s) \in \Gamma^K $, we obtain
	\begin{equation*}
		\left[ \theta^k, \theta^{k'} \right]_t = \Sigma^2 \int_{0}^{t} \left( \ind_{k,k'} - \theta^{k'}(s) \right) \theta^k(s) ds.
	\end{equation*}
	We recover the covariance structure of a $ (K-1) $-dimensional Wright-Fisher diffusion, which concludes the proof.
\end{proof}

The next sections are devoted to the proof of Lemma~\ref{lemma:katzenberger_projections}.

\subsubsection{The scalar-product form}

The goal of this section is to prove that, under the $\infty$-decomposability
condition, Katzenberger projections exhibit a tractable algebraic structure.
\begin{prop}[Scalar-product form]
	\label{prop:scalar_form}
	There exists a map $H:\Rv\to\Rv$ such that, for all $u_0\in \Bs$,
	\begin{equation*}
		\pi^k(u_0)
		= \langle H(\S(u_0)), u_0^k \rangle\, \tilde{h}.
	\end{equation*}
	In particular, $H(\tilde{h}) = h$, and for all $v_0 \in \B$,
	\begin{equation} \label{Hv_v=1}
		\langle H(v_0), v_0 \rangle = 1.
	\end{equation}
\end{prop}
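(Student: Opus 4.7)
The plan is to exploit the linearity of the auxiliary ODE \eqref{eq:flot_u} in the unknown together with the Perron--Frobenius structure of $F(\tilde h)$ provided by Assumption~\ref{A3} and the decomposition \eqref{eq:PR_dec}. For fixed $v_0 \in \B$, the equation $\dot w_t = F(\phi(t,v_0))\, w_t$ is linear in $w$, so the flow $w_0 \mapsto \varphi(t,v_0,w_0)$ is a linear operator on $\R^E$; denote its matrix by $\Psi_t(v_0)$. By Assumption~\ref{A2} and \eqref{eq:J_dec}, $\phi(t,v_0) \to \tilde h$ exponentially, and the $\mathcal{C}^3$-regularity of $F$ (Assumption~\ref{A:regularity}) then yields $F(\phi(t,v_0)) \to F(\tilde h)$ at the same rate.

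My first step is to upgrade this to the existence of a limit $\Psi_\infty(v_0) := \lim_{t\to\infty}\Psi_t(v_0)$ whose range is contained in $\R\tilde h$. A Duhamel comparison against the decomposition $e^{tF(\tilde h)} = P + R_t$ from \eqref{eq:PR_dec} does this: the exponentially small perturbation $F(\phi(t,v_0)) - F(\tilde h)$ combined with the uniform exponential decay of the non-principal modes $R_t$ rules out transverse components asymptotically, while the projection onto $\R\tilde h$ stabilises. Alternatively, the existence of the limit is granted for $w_0$ of the form $u_0^k$ with $u_0 \in \Bs$ by the very definition \eqref{eq:def_Katz}, and extends to every $w_0 \in \R^E$ by linearity of $\Psi_t(v_0)$ (using that $\tilde h$ has positive entries, so any coordinate vector can be realised up to a small multiple as such a column). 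Either way, $\Psi_\infty(v_0)$ is a linear operator of rank at most one with image in $\R\tilde h$, hence there is a unique $H(v_0) \in \R^E$ such that
\[
\Psi_\infty(v_0)\, w_0 \;=\; \langle H(v_0), w_0\rangle\, \tilde h \qquad \text{for every } w_0 \in \R^E,
\]
which, applied to $w_0 = u_0^k$, gives the desired scalar-product form.

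To identify $H(\tilde h)$, note that $b(\tilde h) = 0$ gives $\phi(t,\tilde h) \equiv \tilde h$, so $\Psi_t(\tilde h) = e^{tF(\tilde h)}$ and, by \eqref{eq:PR_dec}--\eqref{eq:pf_proj}, $\Psi_\infty(\tilde h) = P$, i.e.\ $H(\tilde h) = h$. For the normalisation \eqref{Hv_v=1}, I would observe that $v_t = \phi(t,v_0)$ itself solves $\dot v_t = F(\phi(t,v_0))\, v_t$ with initial datum $v_0$; by uniqueness $\varphi(t,v_0,v_0) = \phi(t,v_0) \to \tilde h$, so $\Psi_\infty(v_0)\, v_0 = \tilde h$, which forces $\langle H(v_0), v_0\rangle = 1$. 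The main obstacle is the rank-one convergence step in the second paragraph, where the spectral gap of $F(\tilde h)$ (Assumption~\ref{A3}) must be combined with the exponential stability of the deterministic flow (Assumption~\ref{A2}); once this is in place, the remaining identities are essentially algebraic.
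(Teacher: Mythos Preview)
Your proposal is correct and follows essentially the same approach as the paper: the paper's Lemma~\ref{lem:theta} establishes exactly your convergence $\Psi_t(v_0)w_0 \to \theta(v_0,w_0)\tilde h$ via the same Duhamel comparison against $e^{tF(\tilde h)} = P + R_t$, and the proof of Proposition~\ref{prop:scalar_form} then invokes linearity of $\varphi(t,v_0,\cdot)$ to produce $H(v_0)$, with the identifications $H(\tilde h)=h$ and $\langle H(v_0),v_0\rangle=1$ obtained exactly as you outline. One small caveat: your alternative route via \eqref{eq:def_Katz} is slightly circular, since that definition presupposes the limit exists; the Duhamel argument is what actually secures this, so it should be your primary (not alternative) justification.
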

Intuitively, $H(v_0)_x$ quantifies the reproductive value of type-$x$ individuals
given the population configuration $v_0$. The second part of the result shows that,
on $\Gamma^K$, the Perron-Frobenius and the Katzenberger reproductive
values, and consequently the corresponding projections, coincide. Moreover,
\eqref{Hv_v=1} indicates that the dynamics encoded by \eqref{eq:SDE_tot_pop}
is \textit{critical}.

For simplicity, we assume that the set $\B$ in
Assumption~\ref{A2} is relatively compact.
The results below still hold for more general sets, by
working with compact subsets of $\B$.
Under this assumption, the Hartman-Grobman theorem
provides the existence of two
constants $M_3,\gamma_3>0$ such that, for all $ v_0\in \B$ and all $t\in[0,\infty)$,
\begin{equation}
	\label{eq:HB1}
	\|\phi(t,v_0)-\tilde h\|\leq M_3e^{-\gamma_3 t}{\|v_0-\tilde h\|}.
\end{equation}
Combining this bound with Assumption~\ref{A:regularity}, we get the existence of
a constant $M_4>0$ such that, for all $t\in[0,\infty)$ and all $v_0\in \B$,
\begin{equation}
	\label{eq:bound_F}
	\|F(\phi(t,v_0))-F(\tilde{h})\|\leq
	M_4e^{-\gamma_3 t}\,\|v_0-\tilde h\|.
\end{equation}

\begin{lem}
	\label{lem:theta}
	Recall the definition of the flow $\varphi$ from \eqref{eq:detFlow}.
	For all $v_0\in \B$ and all $w_0\in \Rv$,
	\begin{equation*}
		\lim_{t\to\infty}\varphi(t,v_0,w_0)=
		\theta(v_0,w_0)\tilde h,
	\end{equation*}
	with
	\begin{equation}
		\label{eq:def_ell}
		\theta(v_0,w_0)=\l w_0+ \int_0^\infty (F(\phi(s,v_0)-F(\tilde h)))
		\varphi(s,v_0,w_0)ds,h\r, \quad  v_0\in \B, \; w_0\in \Rv.
	\end{equation}
\end{lem}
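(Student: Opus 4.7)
The plan is to treat the linear ODE $\dot w_t = F(\phi(t,v_0)) w_t$ as a time-dependent perturbation of the autonomous system $\dot w_t = F(\tilde h) w_t$, and then exploit the Perron--Frobenius decomposition \eqref{eq:PR_dec} together with the exponential estimate \eqref{eq:bound_F} to pass to the limit. Concretely, writing $F(\phi(s,v_0)) = F(\tilde h) + [F(\phi(s,v_0)) - F(\tilde h)]$ and applying Duhamel's (variation of constants) formula, one gets
\begin{equation*}
\varphi(t,v_0,w_0) = e^{tF(\tilde h)} w_0 + \int_0^t e^{(t-s)F(\tilde h)} \bigl(F(\phi(s,v_0)) - F(\tilde h)\bigr) \varphi(s,v_0,w_0)\, ds.
\end{equation*}
Substituting $e^{tF(\tilde h)} = P + R_t$ splits the right-hand side into a ``projected'' part, which will produce the limit, and a ``remainder'' part governed by $\|R_t\| \le M_1 e^{-\gamma_1 t}$.

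The first technical step is to establish a uniform-in-$t$ bound on $\|\varphi(t,v_0,w_0)\|$. Using $\|e^{tF(\tilde h)}\| \le \|P\| + M_1$ and the bound \eqref{eq:bound_F}, the Duhamel identity gives
\begin{equation*}
\|\varphi(t,v_0,w_0)\| \le (\|P\|+M_1)\|w_0\| + \int_0^t (\|P\|+M_1) M_4 e^{-\gamma_3 s} \|v_0-\tilde h\|\, \|\varphi(s,v_0,w_0)\|\, ds,
\end{equation*}
so Gronwall's inequality yields $\sup_{t\ge 0} \|\varphi(t,v_0,w_0)\| \le C(v_0)\|w_0\|$ for some constant $C(v_0)<\infty$. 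In particular, the integrand $(F(\phi(s,v_0))-F(\tilde h))\varphi(s,v_0,w_0)$ is absolutely integrable on $[0,\infty)$, so the integral defining $\theta(v_0,w_0)$ is well defined.

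Next I would pass to the limit in each piece. The term $R_t w_0$ decays exponentially. For the double integral, I would decompose $[0,t]$ as $[0,t/2]\cup[t/2,t]$: on $[0,t/2]$ the factor $\|R_{t-s}\|\le M_1 e^{-\gamma_1 t/2}$ gives exponential decay, while on $[t/2,t]$ the factor $\|F(\phi(s,v_0))-F(\tilde h)\|\le M_4 e^{-\gamma_3 t/2}\|v_0-\tilde h\|$ gives exponential decay, using the uniform bound on $\varphi$ in both cases. Thus the ``$R$-part'' of the integral vanishes as $t\to\infty$. For the ``$P$-part'', since $P$ is a constant projector and the integrand is integrable by the previous paragraph, the integral converges to $P\int_0^\infty (F(\phi(s,v_0))-F(\tilde h))\varphi(s,v_0,w_0)ds$. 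Adding $Pw_0 = \langle w_0,h\rangle \tilde h$ and recognizing the scalar product form $Pw=\langle w,h\rangle \tilde h$ yields precisely the formula \eqref{eq:def_ell} for $\theta(v_0,w_0)$.

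The main obstacle is the a priori boundedness of $\varphi(t,v_0,w_0)$ on $[0,\infty)$: since the unperturbed dynamics $e^{tF(\tilde h)}$ has $0$ as a principal eigenvalue, the solution does not decay, and one must rely on the exponential smallness of the time-dependent perturbation $F(\phi(s,v_0))-F(\tilde h)$ inherited from the Hartman--Grobman-type bound \eqref{eq:HB1}. Once this uniform control is secured, the remaining arguments are a routine decomposition of $e^{tF(\tilde h)}$ into its projection and remainder parts. One should also verify that the constant in the Gronwall step can be taken uniformly over $w_0$ in bounded sets and $v_0$ in compact subsets of $\B$, which is automatic from the explicit form of the bound.
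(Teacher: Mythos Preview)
Your proposal is correct and follows essentially the same route as the paper: Duhamel's formula, a Gronwall bound to obtain $\sup_t\|\varphi(t,v_0,w_0)\|<\infty$, and the decomposition $e^{tF(\tilde h)}=P+R_t$ to isolate the limiting $P$-part and kill the $R$-part. The only cosmetic difference is that the paper splits the remainder integral at a fixed $t_0$ and takes $t\to\infty$ followed by $t_0\to\infty$, whereas you split at $t/2$; both arguments are standard and equivalent.
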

\begin{proof}
	By definition of the flow $\varphi$, we have
	\begin{equation}
		\label{eq:frozen_dynamic}
		\frac{d\varphi}{dt}(t,v_0,w_0)=F(\phi(t,v_0))\varphi(t,v_0,w_0)
		=F(\tilde h)\varphi(t,v_0,w_0)
		+(F(\phi(t,v_0)-F(\tilde h)))\varphi(t,v_0,w_0).
	\end{equation}
	By Duhamel's formula, the solution of this ODE can be written as
	\begin{equation}
		\label{eq:duhamel1}
		\varphi(t,v_0,w_0)=e^{tF(\tilde h)}w_0+\int_{0}^{t}e^{(t-s)F(\tilde h)}
		(F(\phi(s,v_0)-F(\tilde h)))\varphi(s,v_0,w_0)ds.
	\end{equation}
	We then see from \eqref{eq:bound_F} and the triangle inequality that
	\begin{equation}
		\label{eq:duhamel_phi}
		\|\varphi(t,v_0,w_0)\|\leq\|w_0\|+M\int_{0}^{t}e^{-\gamma s}\|\varphi(s,v_0,w_0)\|ds.
	\end{equation}
	Grönwall's inequality thus yields
	\begin{equation}
		\label{eq:bound_vphi0}
		\,\forall t\in[0,\infty),\quad \|\varphi(t, v_0 ,w_0)\|\leq M\|w_0\|,
	\end{equation}
	and this bound holds uniformly in $v_0\in\B$.

	Next, recall from \eqref{eq:PR_dec} that
	\begin{equation*}
		e^{tF(\tilde h)}w_0\xrightarrow[]{t\to\infty}P(w_0)= \l w_0,h\r \tilde h.
	\end{equation*}
	We then apply the same decomposition \eqref{eq:PR_dec} to split the integral in \eqref{eq:duhamel1}
	into two terms, and show that, for all $t\in[0,\infty)$,
	\begin{equation*}
		I_1(t):=P\int_0^t(F(\phi(s,v_0)-F(\tilde h)))\varphi(s,v_0,w_0)ds \,
	\end{equation*}
	is colinear to $\tilde h$
	(which is clear by the definition of the projector $P$) and that
	\begin{equation}
		\label{eq:I2}
		I_2(t):=	\int_0^tR_{t-s}\left[(F(\phi(s,v_0)-F(\tilde h)))\varphi(s,v_0,w_0)\right]ds
		\xrightarrow[]{t\to\infty} 0.
	\end{equation}
	To show \eqref{eq:I2}, we observe that,
	by the triangle inequality, \eqref{eq:PR_dec}, \eqref{eq:bound_F} and \eqref{eq:bound_vphi0},  we have
	\[
		\| I_2(t) \| \leq
		M\left(\int_0^{t_0} \|R_{t-s}\|ds+\int_{t_0}^t \|F(\phi(s,v_0))-F(\tilde h)\|ds\right)
		\quad \text{for any } t_0>0,
	\]
	which converges to $0$ when we first let $t\to\infty$ and then $t_0\to\infty$.
	We conclude the proof of the lemma by noting that the integral in $I_1(t)$
	is absolutely convergent by \eqref{eq:bound_vphi0} and \eqref{eq:bound_F}.
\end{proof}

\begin{proof}[Proof of Proposition~\ref{prop:scalar_form}]
	First, we note that, for fixed $v_0 \in \B$, the map $w_0 \mapsto \theta(v_0, w_0)$
	defines a linear form on $\Rv$: this is clear
	from \eqref{eq:def_ell} since the flow $\varphi(t,v_0,\cdot)$ is linear.
	This implies that, for all $v_0\in \B$,
	\begin{equation*}
		\exists ! H(v_0): \quad \forall w_0\in \Rv, \quad  \theta(v_0,w_0)=\langle H(v_0), w_0 \rangle.
	\end{equation*}
	Putting this together with \Cref{lem:theta} and \eqref{eq:def_Katz} yield
	the first part of the result.

	Let us now prove that $H(\tilde h)=h$ and that $\theta(v_0,v_0)=1$ for all $v_0\in \B$.
	For $v_0=\tilde h$, we see that
	\[\theta(v_0,w_0)=\l H(v_0), w_0 \r=\l w_0, h\r, \quad \forall w_0\in \Rv
		.\]
	This shows that $H(\tilde h )=h$. The second point is a direct consequence of \eqref{eq:lim_flow}.

\end{proof}

\begin{nota} In the remainder of the section, we write $\theta$ for the map
	defined by
	\begin{equation}
		\label{eq:def_theta}
		(v_0,w_0)\in \B \times \Rv
		\mapsto \theta(v_0,w_0)=\langle H(v_0),w_0 \rangle \in \R.
	\end{equation}
	With the notation introduced in Lemma~\ref{lem:theta} and \eqref{eq:defSk}, we have
	\begin{equation}
		\label{eq:pi_k_comp}
		\pi^k(u_0)=\theta \circ \tS^k(u_0)\tilde h=:\theta^k(u_0)\tilde h, \quad u_0\in \Bs.
	\end{equation}
\end{nota}

The following result is a direct consequence of Proposition~\ref{prop:scalar_form}.
\begin{cor}
	\label{cor:manifold}
	For all $u_0\in \Bs$,
	\begin{equation*}
		(\theta^k(u_0))_{k \in [K]} \otimes \tilde{h} \in \Gamma^K.
	\end{equation*}
\end{cor}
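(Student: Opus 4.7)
The corollary is an almost immediate consequence of Proposition~\ref{prop:scalar_form} once two things are verified: that $\sum_{k=1}^K \theta^k(u_0)=1$, and that $\theta^k(u_0)\geq 0$ for every $k\in[K]$. I would treat these two requirements separately.

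For the sum condition, the plan is to invoke the scalar-product form $\theta^k(u_0)=\langle H(\mathcal{S}(u_0)),u_0^k\rangle$ given by Proposition~\ref{prop:scalar_form}, exploit linearity of the scalar product in its second argument, and use $\mathcal{S}(u_0)=\sum_k u_0^k$ to write
\[
\sum_{k=1}^{K}\theta^k(u_0) \;=\; \Big\langle H(\mathcal{S}(u_0)),\sum_{k=1}^K u_0^k \Big\rangle \;=\; \langle H(\mathcal{S}(u_0)),\mathcal{S}(u_0)\rangle.
\]
By definition of $\mathcal{B}^*$ in \eqref{eq:BSt}, $\mathcal{S}(u_0)\in\mathcal{B}$, so the criticality identity \eqref{Hv_v=1} of Proposition~\ref{prop:scalar_form} makes this value equal to $1$.

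For non-negativity, I would argue at the level of the defining flow $\pi^k(u_0)=\lim_{t\to\infty}\varphi(t,\mathcal{S}(u_0),u_0^k)$ rather than through $H$ directly. The vector $w_t:=\varphi(t,\mathcal{S}(u_0),u_0^k)$ solves the non-autonomous linear ODE $\dot w_t = F(\phi(t,\mathcal{S}(u_0)))\,w_t$ with $w_0=u_0^k\in\R_{\geq 0}^E$. Since $\phi(t,\mathcal{S}(u_0))$ stays in $\R_{\geq 0}^E$ (and in fact in $\mathcal{B}\subset\R_+^E$), Assumption~\ref{A:regularity}-3 ensures that $F(\phi(t,\mathcal{S}(u_0)))$ is Metzler for every $t\geq 0$; a standard invariance argument for Metzler generators (for instance, perturb $F(\cdot)$ by $\epsilon I$ so the off-diagonal entries become strictly positive, verify that the non-negative orthant is trapping, and let $\epsilon\to 0$) then yields $w_t\in\R_{\geq 0}^E$ for all $t\geq 0$. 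Passing to the limit gives $\pi^k(u_0)\in\R_{\geq 0}^E$. Finally, Assumption~\ref{A3} combined with Perron--Frobenius forces each coordinate of $\tilde h$ to be strictly positive, so reading the identity $\pi^k(u_0)=\theta^k(u_0)\tilde h$ on any single coordinate yields $\theta^k(u_0)\geq 0$.

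There is no substantive obstacle: the whole content has been packaged into Proposition~\ref{prop:scalar_form} and \eqref{Hv_v=1}, and the remaining work is bookkeeping. The only point requiring minor care is the invariance of the non-negative orthant under the time-varying Metzler flow, which is classical but is what ultimately powers the non-negativity of the Katzenberger projections.
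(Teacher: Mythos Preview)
Your proof is correct and follows essentially the same approach as the paper: the sum condition via linearity and \eqref{Hv_v=1}, and non-negativity via Assumption~\ref{A:regularity}-3. The paper's own proof is terser on the non-negativity step (it simply cites the Metzler assumption without spelling out the orthant-invariance argument), whereas you unpack this explicitly; the underlying reasoning is identical.
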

\begin{proof}
	By Assumption~\ref{A:regularity}-3, for all $u_0 \in \Bs$ and $k \in [K]$, we have $\theta(\S(u_0), u_0^k) \geq 0$. Furthermore, since $\theta$ is linear in its second argument and by \eqref{Hv_v=1}, it follows that for all $u_0 \in \Bs$,
	\[
		\sum_{k} \theta(\S(u_0), u_0^k) = \theta(\S(u_0), \S(u_0)) = \langle H(\S(u_0)), \S(u_0) \rangle = 1.
	\]
\end{proof}

\subsubsection{{Regularity of Katzenberger projections}}
\begin{lem}
	\label{lem:reg_katzenberger}
	{For all $w_0 \in \mathbb{R}^E$, the map $v_0 \mapsto \theta(v_0, w_0)$ is
		three times continuously differentiable on~$\mathcal{B}$.}
\end{lem}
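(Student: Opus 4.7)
The strategy is to differentiate the explicit representation
$$
\theta(v_0,w_0)=\langle w_0,h\rangle+\int_0^\infty\bigl\langle\bigl(F(\phi(s,v_0))-F(\tilde h)\bigr)\varphi(s,v_0,w_0),\,h\bigr\rangle\,ds
$$
from \eqref{eq:def_ell} under the integral sign three times in $v_0$, using dominated convergence. Since $w_0$ enters only linearly through $\varphi$, it suffices to control all derivatives in $v_0$ uniformly on $\mathcal{B}$ by an exponentially integrable majorant. The two main inputs are (i) the smoothness of the flows $\phi$ and $\varphi$ in $v_0$, which follows from the $C^3$ regularity of $F$ (Assumption~\ref{A:regularity}-2) via classical ODE theory; and (ii) exponential decay in $s$ for all the relevant factors, which will come from the exponential stability of $\tilde h$ (Assumption~\ref{A2}) and Perron--Frobenius decay \eqref{eq:PR_dec}.

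The first step is to bound the derivatives of $\phi$. The map $v_0\mapsto\phi(t,v_0)$ is $C^3$ on $\mathcal{B}$, and its Jacobian satisfies the linearized equation $\partial_t[D_{v_0}\phi]=J_b(\phi(t,v_0))\,D_{v_0}\phi$. Combined with \eqref{eq:HB1} and the decay \eqref{eq:J_dec} of $e^{tJ}$, a standard Hartman--Grobman/Grönwall argument yields constants $M,\gamma>0$, uniform in $v_0\in\mathcal{B}$, such that
$$
\|D^j_{v_0}\phi(t,v_0)\|\leq M e^{-\gamma t}\quad\text{for }j=1,2,3,\quad t\geq 0.
$$
Together with \eqref{eq:bound_F} and the $C^3$ regularity of $F$, this gives exponential decay of $D^j_{v_0}[F(\phi(s,v_0))]$ for $j\geq 1$, and of $F(\phi(s,v_0))-F(\tilde h)$ itself ($j=0$).

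The second step is to bound the derivatives of $\varphi$. For fixed $w_0$, the map $v_0\mapsto\varphi(t,v_0,w_0)$ is $C^3$ on $\mathcal{B}$, and $D_{v_0}\varphi$ satisfies the inhomogeneous linear equation
$$
\partial_t[D_{v_0}\varphi]=F(\phi(t,v_0))\,D_{v_0}\varphi+\bigl(D_{v_0}[F(\phi(t,v_0))]\bigr)\varphi,\qquad D_{v_0}\varphi(0)=0.
$$
Writing this with the fundamental solution of $\partial_t=F(\phi(t,v_0))\cdot$, using the Perron--Frobenius decomposition \eqref{eq:PR_dec} to bound the fundamental solution, and using Step 1 for the forcing term yields a uniform bound
$\|D^j_{v_0}\varphi(t,v_0,w_0)\|\leq M\|w_0\|$ for $j=0,1,2,3$ and all $t\geq 0$, $v_0\in\mathcal{B}$; the argument is essentially the one already used in the proof of Lemma~\ref{lem:theta} to bound $\varphi$ itself, adapted to each derivative via induction on the order.

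The final step is to combine these ingredients. By the Leibniz rule, each term appearing in the formal $j$-th derivative ($1\leq j\leq 3$) of the integrand
$$
\bigl\langle(F(\phi(s,v_0))-F(\tilde h))\,\varphi(s,v_0,w_0),\,h\bigr\rangle
$$
is a product of $(F(\phi)-F(\tilde h))$ or some $D^{i}_{v_0}[F(\phi)]$ with $i\geq 1$, times some $D^{j-i}_{v_0}\varphi$. In every case at least one factor decays like $e^{-\gamma s}$ while the other is bounded uniformly in $s$, yielding a dominating function of the form $C(w_0)\,e^{-\gamma s}$ that is integrable on $[0,\infty)$ and uniform in $v_0\in\mathcal{B}$. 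Dominated convergence then justifies differentiating under the integral sign three times and shows that the resulting derivatives are continuous in $v_0$, concluding that $v_0\mapsto\theta(v_0,w_0)\in C^3(\mathcal{B})$. The main technical nuisance is the bookkeeping in Step 2: showing that all $v_0$-derivatives of $\varphi$ remain bounded rather than growing in $t$, which is what allows the exponential decay contributed by the $F(\phi)-F(\tilde h)$-type factor to survive in every term of the Leibniz expansion.
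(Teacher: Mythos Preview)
Your proposal is correct and follows essentially the same approach as the paper. The paper works directly with the limit $\lim_{t\to\infty}\varphi(t,v_0,w_0)$ and shows that each $D^j_{v_0}\varphi(t,v_0,w_0)$ converges uniformly in $v_0$ as $t\to\infty$, while you differentiate the integral representation \eqref{eq:def_ell} under the integral sign; but the core estimates used are identical (exponential decay of $D^j_{v_0}\phi$ from the stability of $J_b(\tilde h)$, uniform-in-$t$ bounds on $D^j_{v_0}\varphi$ via Duhamel's formula and Gr\"onwall, and the Perron--Frobenius decomposition \eqref{eq:PR_dec} to control the propagator $e^{tF(\tilde h)}$).
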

The proof of this lemma is postponed to Appendix~\ref{sec:proof_regularity}.

\begin{cor}
	\label{cor:H_smooth}
	The map $H:\B\to \R$ is three times continuously differentiable.
	In particular, there exists a constant $M>0$ such that,
	\[ \forall v_0\in \B, \quad \|H(v_0)-h\|\leq M\|v_0-\tilde h\|.\]
\end{cor}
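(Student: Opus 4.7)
The entire statement is a direct consequence of Lemma~\ref{lem:reg_katzenberger} and Proposition~\ref{prop:scalar_form}; the plan is to recover $H$ from $\theta$ coordinate by coordinate and then to read off the Lipschitz estimate from the $\mathcal{C}^1$ regularity.

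\textbf{Step 1: componentwise recovery.} By \eqref{eq:def_theta}, $\theta(v_0,\cdot)$ is a linear form on $\R^E$, and its representing vector is exactly $H(v_0)$. Testing against the canonical basis gives, for every $x\in E$ and every $v_0\in\B$,
\[
H(v_0)_x = \theta(v_0, e_x).
\]
Applying Lemma~\ref{lem:reg_katzenberger} with $w_0 = e_x$ for each $x\in E$, we conclude that each coordinate of $H$ is three times continuously differentiable on $\B$. Hence $H\in\mathcal{C}^3(\B,\R^E)$, which is the first assertion.

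\textbf{Step 2: Lipschitz bound.} Since $H$ is $\mathcal{C}^1$ on $\B$, the differential $DH$ is continuous there. Using the running assumption that $\B$ is relatively compact (if necessary, we restrict to a slightly smaller open neighbourhood of $\tilde h$ whose closure is contained in $\B$, which is harmless since the corollary is only used in such a neighbourhood in the sequel), we see that $\|DH\|$ is bounded on $\B$ by some constant $M>0$. Recalling that $H(\tilde h)=h$ from Proposition~\ref{prop:scalar_form}, and applying the mean value inequality along the straight segment from $\tilde h$ to $v_0$ (which we may take to lie in $\B$ by shrinking it to a convex neighbourhood of $\tilde h$ if needed), we obtain
\[
\|H(v_0)-h\| = \|H(v_0)-H(\tilde h)\| \leq M\,\|v_0-\tilde h\|, \quad v_0\in\B,
\]
as required.

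\textbf{Main obstacle.} The heavy lifting is done in Lemma~\ref{lem:reg_katzenberger}, whose proof is deferred to the appendix; once this is granted, the corollary is essentially immediate. The only minor technical point is to make sure that a uniform bound on $DH$ is available on $\B$, despite $\B$ being open; this is resolved by the relative compactness of $\B$ together with the continuity of $DH$ (and, if necessary, a routine shrinking of the neighbourhood around $\tilde h$ to a convex set with compact closure inside $\B$).
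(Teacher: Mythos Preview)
Your proof is correct and follows essentially the same route as the paper: the key observation $H(v_0)_x=\theta(v_0,e_x)$ is exactly what the paper invokes, and the Lipschitz estimate is then read off from the $\mathcal{C}^1$ regularity together with $H(\tilde h)=h$. Your added remarks on the relative compactness and convexity of $\B$ are a welcome clarification of a point the paper leaves implicit.
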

\begin{proof}
	The results stems from the previous lemma together with the fact that
	\[
		\forall v_0\in\B, \; \forall x\in E, \quad [H(v_0)]_x=\theta(v_0,e_x).
	\]
\end{proof}

The following result will allow us to apply Ito's formula to $ (\theta^k(U^N_t), t \geq 0) $.
Note that we use the notation for Fréchet derivatives for $ H : \B \to \R^E $, but since
$ \theta^k : \M \to \R $, we use standard $ \nabla \theta^k $ and $ \nabla^2 \theta^k $
to denote its gradient and its Hessian matrix, respectively.

\begin{lem} \label{lemma:grad_theta_k}
	The map $ \theta^k : \Bs \to \R $ is twice continuously differentiable and, for all $ u \in \Bs $,
	\begin{equation} \label{expr_nabla_thetak}
		\nabla \theta^k(u)_{xj} = \langle DH(\S(u))(e_x), u^k - \ind_{j=k} \S(u) \rangle, \quad \forall\, x \in E,\; j \in [K],
	\end{equation}
	and, for $ x, y \in E $ and $ j, \ell \in [K] $,
	\begin{equation} \label{expr_nabla2_thetak}
		\nabla^2 \theta^k(u)_{xj, y\ell} = \langle D^2 H(\S(u))(e_x, e_y), u^k \rangle + \ind_{j=k} \langle DH(\S(u))(e_y), e_x \rangle + \ind_{k=\ell} \langle DH(\S(u))(e_x), e_y \rangle.
	\end{equation}
	Moreover, for all $ u \in \Bs $,
	\begin{equation} \label{katzenberger_cancellation}
		\lf\nabla \theta^k(u) , F(\S(u)) u \rf = \sum_{x,j} \nabla \theta^k(u)_{xj} (F(\S(u))u^j)_x = 0.
	\end{equation}
\end{lem}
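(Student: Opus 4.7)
The plan is to decompose the proof into four pieces, each of which is light once the right algebraic identities from Proposition~\ref{prop:scalar_form} have been extracted.

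\textbf{Step 1: Regularity.} Recall that $\theta^k(u) = \l H(\S(u)), u^k \r$. Since $H : \B \to \R^E$ is three times continuously differentiable by Corollary~\ref{cor:H_smooth} and the maps $u \mapsto \S(u)$ and $u \mapsto u^k$ are linear, the chain rule immediately yields that $\theta^k : \Bs \to \R$ is (at least) twice continuously differentiable, which is what the statement requires.

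\textbf{Step 2: Two consequences of $\l H(v), v \r = 1$.} The whole lemma is driven by the normalization identity \eqref{Hv_v=1}. Differentiating it once, and then twice, in $v$ gives
\begin{align*}
	\l H(v), w \r &= -\l DH(v)(w), v \r, \\
	\l D^2 H(v)(w_1, w_2), v \r &= -\l DH(v)(w_1), w_2 \r - \l DH(v)(w_2), w_1 \r,
\end{align*}
for all $v \in \B$ and $w, w_1, w_2 \in \R^E$. These are the two cancellations that turn the raw chain-rule expressions into the compact forms stated.

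\textbf{Step 3: Computing $\nabla \theta^k$ and $\nabla^2 \theta^k$.} Applying the chain rule to $\theta^k(u) = \l H(\S(u)), u^k \r$, and using $\partial_{u_{xj}} \S(u) = e_x$ and $\partial_{u_{xj}} u^k = \ind_{j=k} e_x$, one finds
\begin{equation*}
	\nabla \theta^k(u)_{xj} = \l DH(\S(u))(e_x), u^k \r + \ind_{j=k} \l H(\S(u)), e_x \r,
\end{equation*}
and the first identity from Step~2 turns this into \eqref{expr_nabla_thetak}. Differentiating this expression once more in direction $e_{y\ell}$ yields
\begin{equation*}
	\nabla^2 \theta^k(u)_{xj, y\ell} = \l D^2 H(\S(u))(e_x, e_y), u^k - \ind_{j=k} \S(u) \r + (\ind_{\ell=k} - \ind_{j=k}) \l DH(\S(u))(e_x), e_y \r.
\end{equation*}
Substituting the second identity from Step~2 into the term $\ind_{j=k} \l D^2 H(\S(u))(e_x, e_y), \S(u) \r$ produces precisely \eqref{expr_nabla2_thetak}. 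These two steps are straightforward differentiations; no compactness or Lyapunov input is needed.

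\textbf{Step 4: Cancellation \eqref{katzenberger_cancellation} via flow invariance.} This is the only identity that is not a pure differentiation exercise, and I view it as the main (modest) obstacle. The cleanest route is to recognise that $\theta^k$ is \emph{conserved along the deterministic flow} $\dot u_t = F(\S(u_t)) u_t$. Given $u_0 \in \Bs$, let $u_t$ denote the solution starting at $u_0$. By Assumption~\ref{A2}, $\S(u_t) = \phi(t, \S(u_0)) \in \B$ for all $t \geq 0$, so $u_t \in \Bs$. Each column satisfies $u_t^k = \varphi(t, \S(u_0), u_0^k)$, and by uniqueness of ODE solutions the linear flow $\varphi$ enjoys the time-shift property
\begin{equation*}
	\varphi(s+t, \S(u_0), u_0^k) = \varphi(s, \S(u_t), u_t^k), \qquad s, t \geq 0.
\end{equation*}
Passing to the limit $s \to \infty$ and using the definition \eqref{eq:def_Katz} of $\pi^k$ on both sides shows that $\pi^k(u_0) = \pi^k(u_t)$, hence $\theta^k(u_t) = \theta^k(u_0)$ for every $t \geq 0$. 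Differentiating this constant function of $t$ at $t = 0$ yields
\begin{equation*}
	0 = \tfrac{d}{dt}\big|_{t=0} \theta^k(u_t) = \lf \nabla \theta^k(u_0), F(\S(u_0)) u_0 \rf,
\end{equation*}
which is exactly \eqref{katzenberger_cancellation}. The only point requiring a line of justification is that the limit in $s$ commutes with the time-shift, but this is immediate from the existence of the limit $\pi^k(u_0)$ guaranteed by Lemma~\ref{lem:theta}; an alternative, less illuminating, verification is to plug \eqref{expr_nabla_thetak} directly into the sum and exploit $\sum_j u^j = \S(u)$ to collapse the two terms.
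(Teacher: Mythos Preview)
Your proof is correct and follows essentially the same approach as the paper's own proof: both differentiate $\theta^k(u)=\l H(\S(u)),u^k\r$ via the chain rule, use the first and second derivatives of the normalization identity $\l H(v),v\r=1$ to simplify, and obtain \eqref{katzenberger_cancellation} by noting that $\theta^k$ is constant along the deterministic flow and differentiating at $t=0$. The only cosmetic difference is that you isolate the two derivative identities upfront in Step~2, whereas the paper derives them inline as \eqref{conservation_1} and \eqref{conservation_2}.
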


\begin{proof}
	From Proposition~\ref{prop:scalar_form},
	\begin{equation*}
		\theta^k(u) = \langle H(\S(u)), u^k \rangle.
	\end{equation*}
	As a result, for any $ \delta \in \M $,
	\begin{equation*}
		\lf\nabla \theta^k(u), \delta \rf= \langle D[H \circ \S](u)(\delta), u^k \rangle + \langle H(\S(u)), \delta^k \rangle.
	\end{equation*}
	By the chain rule, $ D[H \circ \S](u)(\delta) = D H (\S(u))(D\S(u)(\delta)) $ and since $ \S $ is linear, $ D\S(u)(\delta) = \S(\delta) $.
	Hence
	\begin{equation*}
		\lf \nabla \theta^k(u) , \delta \rf = \langle DH(\S(u))(\S(\delta)), u^k \rangle + \langle H(\S(u)), \delta^k \rangle.
	\end{equation*}
	Moreover, differentiating \eqref{Hv_v=1} with respect to $ v $ yields, for any $ \Delta \in \R^E $ and $ v \in \B $,
	\begin{equation} \label{conservation_1}
		\langle DH(v)(\Delta), v \rangle + \langle H(v), \Delta \rangle = 0.
	\end{equation}
	Applying this with $ v = \S(u) $ and $ \Delta = \delta^k $, we obtain
	\begin{equation*}
		\lf\nabla \theta^k(u) , \delta \rf = \langle D H(\S(u))(\S(\delta)), u^k \rangle - \langle DH(\S(u))(\delta^k), \S(u) \rangle.
	\end{equation*}
	This proves \eqref{expr_nabla_thetak}.
	Differentiating again with respect to $ u $, we obtain, for $ \delta_1, \delta_2 \in \M $,
	\begin{multline*}
		\lf  \nabla^2 \theta^k(u)  \cdot \delta_2 ,\delta_1\rf= \langle D^2 H(\S(u))(\S(\delta_1), \S(\delta_2)), u^k \rangle + \langle DH(\S(u))(\S(\delta_1)), \delta_2^k \rangle \\ - \langle D^2 H(\S(u))(\delta_1^k, \S(\delta_2)), \S(u) \rangle - \langle D H(\S(u))(\delta_1^k), \S(\delta_2) \rangle.
	\end{multline*}
	However, differentiating \eqref{conservation_1} with respect to $ v $, we obtain, for any $ \Delta_1, \Delta_2 \in \R^E $,
	\begin{equation} \label{conservation_2}
		\langle D^2 H(v)(\Delta_1, \Delta_2), v \rangle + \langle D H(v)(\Delta_1), \Delta_2 \rangle + \langle D H(v)(\Delta_2), \Delta_1 \rangle = 0.
	\end{equation}
	Applying this with $ v = \S(u) $, $\Delta_1 = \delta_1^k $ and $ \Delta_2 = \S(\delta_2) $, we obtain
	\begin{equation*}
		\lf \nabla^2 \theta^k(u) \cdot  \delta_2, \delta_1\rf= \langle D^2 H(\S(u))(\S(\delta_1), \S(\delta_2)), u^k \rangle + \langle DH(\S(u))(\S(\delta_1)), \delta_2^k \rangle + \langle D H(\S(u))(\S(\delta_2)), \delta_1^k \rangle.
	\end{equation*}
	This yields \eqref{expr_nabla2_thetak}.
	To prove \eqref{katzenberger_cancellation}, for $ u \in \Bs $, let $ (\Phi_t(u), t \geq 0) $ denote the solution to
	\begin{equation}
		\label{eq:matrix_flow}
		\begin{aligned}
			\frac{d}{dt} \Phi_t(u) & = F(\S(\Phi_t(u))) \Phi_t(u), \quad \Phi_0(u)=u.
		\end{aligned}
	\end{equation}
	Then, noting that $ \theta^k(\Phi_t(u)) = \theta^k(u) $ for all $ t \geq 0 $, we obtain
	\begin{equation*}
		\frac{d}{dt} \theta^k(\Phi_t(u)) = \lf\nabla \theta^k(\Phi_t(u)) , (F(\S(\Phi_t(u))) \Phi_t(u)) \rf= 0.
	\end{equation*}
	Evaluating this expression for $ t = 0 $ yields \eqref{katzenberger_cancellation}.
\end{proof}

\subsubsection{Proof of Lemma~\ref{lemma:katzenberger_projections}}
\label{sec:proof_lemma5}
The first statement of the lemma was established in Proposition~\ref{prop:scalar_form}
and Corollary~\ref{cor:manifold}. The second statement about differentiability follows from Lemma~\ref{lem:reg_katzenberger} and~\eqref{eq:pi_k_comp}.
We now turn to the proof of \eqref{covariance_theta_k}-\eqref{Trace_on_Gamma}.

First, note that
\begin{equation*}
	\mathrm{Tr}\left[\G(u)^* \cdot \nabla^2\theta^k(u) \cdot \G(u) \right] =
	\sum_{\underset{j \in [K]}{z\in E}} \sum_{\underset{n \in [K]}{x \in E}}\sum_{\underset{m \in [K]}{y \in E}}
	\nabla^2 \theta^k(u)_{ym,xn}
	\G(u)_{xn,zj}\G(u)_{ym,zj}.
\end{equation*}
By the definition of $ \G(u) $ in \eqref{eq:def_G},
\begin{align*}
	\sum_{\underset{j \in [K]}{z\in E}} \G(u)_{xn,zj}\G(u)_{ym,zj} & = \ind_{n = m} \sum_{z\in e} \sigma(\S(u), u^n)_{xz} \sigma(\S(u), u^m)_{yz} = \ind_{n = m} \sigma \sigma^*(\S(u), u^n)_{xy}.
\end{align*}
Using \eqref{def_C}, we have
\begin{equation}  \label{GG*}
	\sum_{\underset{j \in [K]}{z\in E}} \G(u)_{xn,zj}\G(u)_{ym,zj} = \ind_{n = m} \left( C(\S(u)) u^n \right)_{xy}.
\end{equation}
As a result,
\begin{equation*}
	\mathrm{Tr}\left[\G(u)^* \cdot \nabla^2\theta^k(u) \cdot \G(u) \right] = \sum_{x \in E} \sum_{y \in E} \sum_{n \in [K]} \nabla^2 \theta^k(u)_{yn,xn} \left( C(\S(u)) u^n \right)_{xy}.
\end{equation*}
Plugging \eqref{expr_nabla2_thetak} in this expression yields
\begin{multline*}
	\mathrm{Tr}\left[\G(u)^* \cdot \nabla^2\theta^k(u) \cdot \G(u) \right] = \sum_{x \in E} \sum_{y \in E} \langle D^2 H(\S(u))(e_x, e_y), u^k \rangle \sum_{n \in [K]} \left( C(\S(u)) u^n \right)_{xy} \\ + \sum_{x \in E} \sum_{y \in E} \left( \langle D H(\S(u))(e_x), e_y \rangle + \langle D H(\S(u))(e_y), e_x \rangle \right) \left( C(\S(u)) u^k \right)_{xy}.
\end{multline*}
Using the linearity of $ Y \mapsto C(X) Y $ in the first line and \eqref{conservation_2} in the second line, we obtain
\begin{multline*}
	\mathrm{Tr}\left[\G(u)^* \cdot \nabla^2\theta^k(u) \cdot \G(u) \right] = \sum_{x \in E}\sum_{y \in E} \left\lbrace \l D^2H(\S(u))(e_x,e_y),u^k \r (C(\S(u))\S(u))_{xy} \right. \\ \left. - \l D^2H(\S(u))(e_x,e_y),\S(u) \r (C(\S(u))u^k)_{xy} \right\rbrace.
\end{multline*}
In particular, if $ u^k = \theta^k \tilde{h} $ and $ \sum_k \theta^k = 1 $, then
\begin{align*}
	\mathrm{Tr}\left[\G(u)^* \cdot \nabla^2\theta^k(u) \cdot \G(u) \right] & = \frac{1}{2} \sum_{x \in E}\sum_{y \in E} \left\lbrace \theta^k \l D^2H(\tilde{h})(e_x,e_y),\tilde{h} \r (C(\tilde{h})\tilde{h})_{xy} - \l D^2H(\tilde{h})(e_x,e_y),\tilde{h} \r \theta^k (C(\tilde{h})\tilde{h})_{xy} \right\rbrace \\
	                                                                       & = 0.
\end{align*}
This concludes the proof of the first point of the lemma.

Next, write
\begin{align*}
	\lf   \mathbf{G}(u) \cdot \nabla \theta^k(u), \mathbf{G}(u) \cdot \nabla \theta^{k'}(u) \rf
	 & ={\sum_{\underset{l\in[K]}{z\in E}}\sum_{\underset{m\in [K]}{w\in E}}
	\sum_{\underset{n\in [K]}{v\in E}}
	({\bf G} (U(s)))_{zl,wm}\nabla \theta^k(U)_{wm}
	({\bf G} (U(s)))_{zl,vn} }          \nabla \theta^{k'}(U)_{vn}           \\
	 & =	\sum_{w\in E}\sum_{v\in E}\sum_{n\in [K]}
	\nabla \theta^{k}(U)_{wn}\nabla \theta^{k'}(U)_{vn}
	(C(\S(U))U^n)_{wv}
\end{align*}
Combining \eqref{conservation_1} and Proposition~\ref{prop:scalar_form}, we obtain that
\begin{equation*}
	\langle D H(\tilde{h})(e_x), \tilde{h} \rangle = - \langle H(\tilde{h}), e_x \rangle = - h_x.
\end{equation*}
Putting this together with~\eqref{expr_nabla_thetak}
shows that if $ u^k = \theta^k \tilde{h} $ and $ \sum_k \theta^k = 1 $, then
\[
	\nabla \theta ^k(u)_{wn}=\l DH(\tilde h)(e_w), (\theta^k-\mathbf{1}_{k=n})\tilde h \r
	=-(\theta^k-\mathbf{1}_{k=n}) h_w
\]
In addition, we have $ C(\tilde{h}) \tilde{h} = a a^*(\tilde{h}) $.
This implies that, for $u\in \Gamma^K$,
\begin{align*}
	\lf   \mathbf{G}(u) \cdot \nabla \theta^k(u), \mathbf{G}(u) \cdot \nabla \theta^{k'}(u) \rf
	 & =\sum_{w\in E}\sum_{v\in E}\sum_{n\in [K]} (\theta^k(u)-\mathbf{1}_{k=n})
	(\theta^{k'}(u)-\mathbf{1}_{k=n}) (C(\tilde{h})\theta^n(u) \tilde{h})_{wv} h_w h_v                                              \\
	 & =\left(\sum_{w\in E}\sum_{v\in E} (aa^*(\tilde h))_{wv} h_w  h_v \right)\left(\sum_{n\in [K]} (\theta^k(u)-\mathbf{1}_{k=n})
	(\theta^{k'}(u)-\mathbf{1}_{k=n}) \theta^n(u)\right).
\end{align*}
The first bracket is no other than $ \Sigma^2 $ and, since $ \sum_n \theta^n = 1 $,
\begin{equation*}
	\sum_{n \in [K]} (\theta^k - \ind_{j=k})(\theta^{k'} - \ind_{j=k'}) \theta^n = \theta^k \theta^{k'} - 2 \theta^k \theta^{k'} + \ind_{k=k'} \theta^k=\ind_{k=k'} \theta^k-\theta^k\theta^{k'}.
\end{equation*}
This concludes the proof of the lemma.

\subsection{An alternative proof for $\infty$-decomposable SDEs}
\label{sec:2nd_proof}

\subsubsection{Structure of the proof}
\label{sec:structure_proof}

As in the first proof, we assume that the set $\B$ from Assumption~\ref{A2} is relatively compact.
The proof of Theorem~\ref{th:main_result} is divided into four main steps.
\begin{itemize}
	\item [(i)] First, following the strategy of Katzenberger~\cite{katzenberger1990solutions}, we apply Ito's formula to $(\theta^k)_{k\in [K]}$. This shows that, in a neighbourhood of the stable
	      manifold $\Gamma^K$, the generator of $(\theta^k)_{k\in [K]}$
	      resembles that of a $(K-1)$-dimensional Wright-Fisher diffusion.
	      The next steps (ii) - (v)  are devoted to proving that the convergence of the
	      trajectories follows from this observation.

	      Essentially, we prove the following stability result.
		      {\begin{prop}
				      \label{prop:stability_p}
				      Under the assumptions of Theorem~\ref{th:main_result},
				      for all $\vep>0$,
				      \[
						 \min_{u\in\Gamma^K} \| U^N_{tN/\Sigma^2} - u \|
						  \Rightarrow 0 \quad \text{in $D((\vep,T], \R)$}.
				      \]
			      \end{prop}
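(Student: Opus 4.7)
The strategy is to build a Lyapunov functional $\Lambda$ measuring the squared distance of $U^N$ to $\Gamma^K$ and to control $\Lambda(U^N_{tN/\Sigma^2})$ via Ito's formula on the accelerated time scale, exploiting the fact that, after rescaling, the drift becomes contractive of order $N$ while the noise variance remains of order one.

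The plan is to introduce
$$
\Lambda(u) \;:=\; \sum_{k=1}^K \bigl\|u^k - \theta^k(u)\,\tilde h\bigr\|_Q^2,
$$
where $\theta^k(u) = \l H(\S(u)), u^k \r$ are the coordinates on $\Gamma^K$ provided by Proposition~\ref{prop:scalar_form}, and $\|\cdot\|_Q$ is an inner-product norm on $\Rv$ adapted to $F(\tilde h)$, chosen so that, by Assumptions~\ref{A:regularity}-3 and~\ref{A3}, $F(\tilde h)$ restricted to the complement of $\R\tilde h$ is strictly $Q$-contractive. By construction $\Lambda \geq 0$, $\Lambda(u)=0$ if and only if $u \in \Gamma^K$, and $\Lambda$ is locally equivalent to the squared Euclidean distance to $\Gamma^K$. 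Moreover, Lemma~\ref{lemma:grad_theta_k} ensures that the $\theta^k$ are conserved along the deterministic flow $\Phi_t$, so the differential of $\Lambda$ along $\Phi_t$ reduces to a quadratic form whose negativity is exactly the $Q$-contraction property.

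Next, writing $\tilde U^N_s := U^N_{sN/\Sigma^2}$, which satisfies
$$
d\tilde U^N_s \;=\; \frac{N}{\Sigma^2}\, F\bigl(\S(\tilde U^N_s)\bigr)\tilde U^N_s\,ds + \frac{1}{\Sigma}\,\G(\tilde U^N_s)\cdot d\tilde\W_s
$$
for a Brownian motion $\tilde\W$, I would apply Ito's formula to $\Lambda(\tilde U^N_s)$. The drift coming from $F(\S(\tilde U^N_s))\tilde U^N_s$ yields, after linearisation around $\Gamma^K$ and use of the $Q$-contraction property, a term bounded above by $-cN\Lambda(\tilde U^N_s)$ for some $c>0$; the Ito correction $\frac{1}{2\Sigma^2}\mathrm{Tr}[\G^*\cdot\nabla^2\Lambda\cdot\G]$ is bounded by some constant $C$ uniformly over a neighbourhood of $\Gamma^K$; and the martingale part has predictable quadratic variation dominated by $C'(N\Lambda(\tilde U^N_s)+1)\,ds$. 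Taking expectations and applying Gronwall one obtains $\Ee[\Lambda(\tilde U^N_{s\wedge\tau_N})] \leq e^{-cNs}\Lambda(U^N_0) + C/(cN)$ up to the stopping time $\tau_N$ at which $\tilde U^N$ first exits a fixed compact neighbourhood of $\Gamma^K$ inside $\Bs$. Standard Burkholder--Davis--Gundy and Doob arguments then upgrade this to $\sup_{s\in[\varepsilon,T]}\Lambda(\tilde U^N_s) \to 0$ in probability, which yields the claim.

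The main obstacle lies in the linearisation step: one must simultaneously argue that the exit time $\tau_N$ diverges, so the linearisation remains valid, and control the discrepancy between the Katzenberger reproductive value $H(\S(u))$ and its Perron--Frobenius approximation $h$, which by Corollary~\ref{cor:H_smooth} is of order $\|\S(u)-\tilde h\|$. This is a bootstrap: the strong contraction rate $cN$ is so violent that $\tilde U^N$ is driven into an $O(1/\sqrt{N})$ neighbourhood of $\Gamma^K$ almost instantaneously, so the linearised estimates close on themselves. The delicate point is to initiate the bootstrap on the short interval $(0,\varepsilon]$ starting from an arbitrary initial condition in $\Bs$, which requires a separate ``entrance'' argument exploiting the deterministic attraction in Assumption~\ref{A2} together with the smallness of the noise.
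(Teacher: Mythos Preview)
Your strategy is a Lyapunov approach on the accelerated time scale; the paper deliberately takes a different route, working on the \emph{unaccelerated} scale with Duhamel's formula and then iterating via the Markov property over $O(N)$ intervals of length $O(1)$. Crucially, the paper does not track the Katzenberger residual $u^k-\theta^k(u)\tilde h$ directly: it first controls $V^N-\tilde h$ (contraction via the Jacobian $J_b(\tilde h)$, all of whose eigenvalues have negative real part) and the Perron--Frobenius residual $Z^{N,k}=U^{N,k}-\langle h,U^{N,k}\rangle\tilde h\in\mathrm{vect}(h)^\perp$ (contraction via $F(\tilde h)|_{\mathrm{vect}(h)^\perp}$), and only at the very end transfers to the Katzenberger projection using $\|H(\S(u))-h\|\leq M\|\S(u)-\tilde h\|$ (Corollary~\ref{cor:H_smooth}).

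Your sketch has a gap precisely at the linearisation step. The vector $u^k-\theta^k(u)\tilde h$ is \emph{not} in the stable subspace of $F(\tilde h)$: its component along $\tilde h$ equals $\bigl(\langle h,u^k\rangle-\theta^k(u)\bigr)\tilde h=\langle h-H(\S(u)),u^k\rangle\,\tilde h$, which is $O(\|\S(u)-\tilde h\|)$ but \emph{not} zero, and $F(\tilde h)\tilde h=0$ so no $Q$-norm makes $F(\tilde h)$ strictly contractive on all of $\Rv$. Thus the claimed inequality $\lf\nabla\Lambda,\mathbf{F}\rf\leq -c\Lambda$ does not follow from the $Q$-contractivity you invoke. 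To close the estimate you would have to add a separate term controlling $\|\S(u)-\tilde h\|^2$ (whose contraction comes from $J_b(\tilde h)$, a different matrix), and once you do that you are essentially back to the paper's two-level decomposition into total-population and Perron--Frobenius parts. The paper also remarks that its approach was designed to avoid Lyapunov-type arguments (as in Katzenberger's original work and in Parsons--Rogers) with a view toward infinite-dimensional extensions; your proposal is closer in spirit to those references than to the paper's second proof.
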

			      This proposition is an application of \cite[Theorem 5.1]{katzenberger1990solutions}. Here, we show that, in the $\infty$-decomposable case, this result follows from the stability of the Perron-Frobenius
			      projections. Unlike Katzenberger's method, this approach is
			      expected to extend readily to higher dimensions. The proof of
			      Proposition~\ref{prop:stability_p} is carried out in steps (ii)-(iv).
		      }
	      Theorem~\ref{th:main_result} then follows from a standard convergence result for continuous semimartingales (see Appendix~\ref{appendix:JacodShiryaev}), see Section~\ref{sec:proof_th}.
	\item[(ii)] {\bf Initial condition.} We  first explain why the
	      assumption on the initial condition $U_0^N$   in Theorem~\ref{th:main_result},
	      \begin{equation}\tag{$\mathcal{H} 1$}
		      \label{eq:H1}
		      \forall N\in \N, \quad U_0^N=U_0 \quad  \text{for some}\;
		      U_0\in \Bs,
	      \end{equation}
	      can be replaced by the following condition: for $N$ large enough,
	      \begin{equation}\tag{$\mathcal{H} 2$}
		      \label{eq:H2}
		      {
		      \| \S(U_0^N)-\tilde{h}\|+\max_{j\in [K]}\|U_0^{N,j}-\pi^j(U_0)\| < N^{-\alpha/2}
		      \quad \text{for some}\; \alpha\in(0,1).}
	      \end{equation}
	      Although this second assumption may seem more restrictive, we  show that, given
	      \eqref{eq:H1}, the solution of the SDE \eqref{eq:matrix_valued_SDE} will
	      satisfy \eqref{eq:H2} with high probability after a time of the order of $ \log(N) $. The argument is standard and we include it in  Section~\ref{sec:initial} for completeness.

	\item[(iii)] {\bf Stability {of Perron-Frobenius projections}.} Second, we show that the population profile remains
	      close to the equilibrium profile $ \tilde{h} $ (Section~\ref{sect:stabil_total})
	      and that
	      its composition matrix is well approximated by a set of
	      \textit{dynamical} Perron-Frobenius-like projections (Section~\ref{sec:proj_perron_frobenius}).
	      More precisely, we prove that if,  {for some} $\alpha\in(0,1)$, the initial condition $U_0^N$ satisfies, for $N$ large enough,
	      \begin{equation}\tag{$\mathcal{H} 3$}
		      \label{eq:H3}
		      {
		      \| \S(U_0^N)-\tilde{h}\|+\max_{j\in [K]}\|U_0^{N,j}-
		      P(U_0^{N,j})\| \leq N^{-\alpha/2},}
	      \end{equation}
	      then, for every $\beta<\alpha$,  with high probability and for $N$ large enough,
	      \begin{equation*}
		      {
		      \forall t\in[0,N],
		      \quad
		      \| \S(U_t^N)-\tilde{h}\|<3M_2N^{-\alpha/2}\quad \text{and} \quad
		      \max_{j\in [K]}\|U_t^{N,j}-
		      P(U_t^{N,j})\| < N^{-\beta/2}},
	      \end{equation*}
	      where $M_2$ is defined in \eqref{eq:J_dec} and $P$ is as in \eqref{eq:pf_proj}.

	\item[(iv)] {\bf Stability of Katzenberger projections.}
	      Finally, we show that the Perron-Frobenius projections constitute
	      a good proxy for the Katzenberger projections, i.e. that
	      \begin{equation*}
		      \pi^j(U^N_t) \approx  P (U^{N,j}_t)
	      \end{equation*}
	      to obtain Proposition~\ref{prop:stability_p} from (ii) and (iii).
	      The proof of this fact
	      relies heavily on Proposition~\ref{prop:scalar_form}.
\end{itemize}

\subsubsection{Ito's formula}
\label{sec:Ito}

\begin{prop}[Ito's formula]
	\label{lem:ito}
	{Let $\kappa^N=\inf\{t>0:U_t^N\notin \Bs\}$}.
	For all $ 0 \leq t \leq {\kappa^N}$,
	\begin{equation*}
		d \left( \theta^{k}(U^N_t) \right)
		= \frac{1}{N} B^k(U^{N}_t) dt+\frac{1}{\sqrt{N}}dY^{N,k}_t,
	\end{equation*}
	with
	\begin{equation*}
		B^k(u) = \frac{1}{2} \mathrm{Tr}\left[\G(u)^* \cdot \nabla^2 \theta^k(u) \cdot \G(u)\right],
	\end{equation*}
	and $ (Y^{N,k}_t, t \geq 0, k \in [K]) $ is a continuous martingale with quadratic variation process
	\begin{equation*}
		d[Y^{N,k}, Y^{N,k'}]_t = \C_{k,k'}(U^N_t) dt,
	\end{equation*}
	where $\C_{k,k'}$ is as in \eqref{eq:def_Cov}.
\end{prop}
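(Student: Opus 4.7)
The plan is to derive the claimed decomposition as a direct application of Itô's formula to the $C^2$ functions $\theta^k$, combined with the crucial algebraic cancellation \eqref{katzenberger_cancellation} established earlier. Lemma~\ref{lemma:grad_theta_k} guarantees that each $\theta^k$ is twice continuously differentiable on $\Bs$, and since the process $U^N$ is stopped at $\kappa^N$, we may freely apply Itô's formula throughout $[0,\kappa^N]$ without worrying about regularity.

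Starting from the matrix-valued SDE \eqref{eq:matrix_valued_SDE}, Itô's formula applied to $\theta^k(U^N_t)$ produces three terms: a first-order drift
$\lf \nabla\theta^k(U^N_t),\, F(\S(U^N_t))U^N_t \rf dt$,
the Hessian contribution $\tfrac{1}{2N}\,\mathrm{Tr}\!\left[\G(U^N_t)^*\cdot\nabla^2\theta^k(U^N_t)\cdot\G(U^N_t)\right] dt$ coming from the quadratic variation of the diffusive part, and a local martingale increment $\tfrac{1}{\sqrt N}\lf \nabla\theta^k(U^N_t),\, \G(U^N_t)\cdot d\W_t\rf$. The key step is to invoke identity \eqref{katzenberger_cancellation} of Lemma~\ref{lemma:grad_theta_k}, which asserts precisely that the first-order drift vanishes pointwise on $\Bs$. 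This cancellation, which reflects the fact that $\theta^k$ is constant along the deterministic flow $(\Phi_t)$ of \eqref{eq:matrix_flow}, is what allows the slow variable $\theta^k(U^N)$ to evolve only on the $1/N$ timescale. What remains of the drift is exactly $\tfrac{1}{N}B^k(U^N_t)\,dt$, as claimed.

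Defining $Y^{N,k}_t$ by
\begin{equation*}
dY^{N,k}_t \;:=\; \lf \nabla\theta^k(U^N_t),\, \G(U^N_t)\cdot d\W_t\rf,
\end{equation*}
it remains to compute the covariation $d[Y^{N,k},Y^{N,k'}]_t$. Since the entries of $\W$ are independent standard Brownian motions, Itô isometry yields that the covariation equals the contraction of $\nabla\theta^k(U^N_t)$ and $\nabla\theta^{k'}(U^N_t)$ against $\G(U^N_t)$ in the same way as in the definition \eqref{eq:def_Cov} of $\C_{k,k'}(U^N_t)$. At this point, one only needs to read off that this is exactly $\C_{k,k'}(U^N_t)\,dt$, using the block-diagonal structure of $\G$ from \eqref{eq:def_G}; the detailed algebra is identical to the identity \eqref{GG*} used in the proof of Lemma~\ref{lemma:katzenberger_projections}.

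Essentially all the work has already been done in establishing Lemma~\ref{lemma:grad_theta_k}: the only minor bookkeeping item is to verify that the local martingale $Y^{N,k}$ is in fact a true martingale up to $\kappa^N$, which follows from the boundedness of $\nabla\theta^k$ and $\G$ on the compact set $\overline{\B}\cap\{u:\mathcal{S}(u)\in\overline{\B}\}$ together with the stopping at $\kappa^N$. I do not anticipate a significant obstacle here; the proposition is truly a compact repackaging of Itô's formula, with the substantive content being the Katzenberger cancellation \eqref{katzenberger_cancellation}.
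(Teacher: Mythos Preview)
Your proposal is correct and follows essentially the same approach as the paper: apply It\^o's formula to $\theta^k(U^N_t)$ using the SDE \eqref{eq:matrix_valued_SDE}, invoke the cancellation \eqref{katzenberger_cancellation} from Lemma~\ref{lemma:grad_theta_k} to kill the first-order drift, and read off the quadratic covariation as $\C_{k,k'}(U^N_t)\,dt$ directly from the definition \eqref{eq:def_Cov}. The paper's proof is equally brief and does not even address the true-versus-local martingale point you raise.
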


\begin{proof}
	By \eqref{eq:matrix_valued_SDE} and Ito's formula, for all $ t < {\kappa^N} $,
	\begin{multline*}
		d\theta^k(U^N_t) = \lf \nabla \theta^k (U^N_t),  F(\S(U^N_t)) U^N_t dt \rf
		+ \frac{1}{\sqrt{N}} \lf\nabla \theta^k(U^N_t), \G(U^N_t) \cdot d \W_t\rf \\
		+\frac{1}{2 N} \mathrm{Tr}\left[\G(U^N_t)^* \cdot \nabla^2 \theta^k(U^N_t) \cdot \G(U^N_t)\right] dt.
	\end{multline*}
	Since $ U^N_t \in \Bs $ for $ t < \kappa^N $, the first term vanishes by \eqref{katzenberger_cancellation}.
	We thus have
	\[ B_t^k(U_t^N)=\frac{1}{2} \mathrm{Tr}\left[\G(U^N_t)^* \cdot \nabla^2 \theta^k(U^N_t) \cdot \G(U^N_t)\right] dt\]
	and
	\begin{equation*}
		d Y^{N,k}_t = \lf\nabla \theta^k(U^N_t), \G(U^N_t) \cdot d \W_t\rf.
	\end{equation*}
	It is then clear that
	\begin{align*}
		d [Y^{N,k}, Y^{N,k'}]_t & = \lf   \mathbf{G}(u) \cdot \nabla \theta^k(u), \mathbf{G}(u) \cdot \nabla \theta^{k'}(u) \rf\ dt= \C_{k,k'}(U_t^N)dt.
	\end{align*}

\end{proof}

{We now state a simple corollary of Lemma~\ref{lemma:katzenberger_projections},
which will be needed to establish the convergence of the processes $(\theta^k(U_t^N))$
to the Wright-Fisher diffusion via the convergence of their generators.}

\begin{lem} \label{lemma:approx_B_C}
	There exists a constant $ M > 0 $ such that, for all $ u \in \Bs $,
	\begin{equation} \label{bound_B}
		\max_{k \in [K]} | B^k(u) | \leq M \max_{k \in [K]} \| u^k - \pi^k(u) \|,
	\end{equation}
	and
	\begin{equation} \label{bound_C}
		\max_{k, k' \in [K]} | \C_{k,k'}(u) - \Sigma^2 \C_{k,k'}^{WF}(u) | \leq M \max_{k \in [K]} \| u^k - \pi^k(u) \|,
	\end{equation}
	where $\C_{k,k'}$ and $\C_{k,k'}^{WF}$ are defined in \eqref{eq:def_Cov}.
\end{lem}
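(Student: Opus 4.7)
The plan is to observe that both $B^k$ and $\C_{k,k'} - \Sigma^2 \C_{k,k'}^{WF}$ are $C^1$ maps of $u$ on $\Bs$ which vanish identically on the submanifold $\Gamma^K$, by \eqref{Trace_on_Gamma} and \eqref{covariance_theta_k} in Lemma~\ref{lemma:katzenberger_projections}. A first-order Taylor expansion around an appropriately chosen point of $\Gamma^K$ will therefore yield a bound proportional to the distance from $u$ to $\Gamma^K$, from which both claimed inequalities will follow.

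The natural point of $\Gamma^K$ to compare with is the Katzenberger projection $\Phi(u) := (\pi^1(u), \ldots, \pi^K(u))$, which belongs to $\Gamma^K$ by Corollary~\ref{cor:manifold}. The difference $u - \Phi(u)$ is then controlled column by column by $\max_k \|u^k - \pi^k(u)\|$, so it suffices to bound $|B^k(u)|$ and $|\C_{k,k'}(u) - \Sigma^2 \C_{k,k'}^{WF}(u)|$ by a constant multiple of any equivalent matrix norm of $u - \Phi(u)$.

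The $C^1$ regularity follows by tracking smoothness through the definitions: $\theta^k$ is $C^3$ by Lemma~\ref{lem:reg_katzenberger}, so $\nabla \theta^k$ and $\nabla^2 \theta^k$ are $C^1$; $\sigma$ is $C^1$ by Assumption~\ref{A:regularity}-2, and hence so is $\mathbf{G}$ via \eqref{eq:def_G}; and $\C_{k,k'}^{WF}$ is a smooth polynomial in the $\theta^k$. Since any $u \in \Bs$ has non-negative entries whose columns sum to an element of the relatively compact set $\B$, the set $\Bs$ is bounded, so the gradients of both maps are bounded on $\Bs$ by a constant $M$. After shrinking $\B$ if necessary to make it convex while still a neighbourhood of $\tilde{h}$, the straight segment $u_s := (1-s)\Phi(u) + s u$ stays in $\Bs$, because $\S(u_s) = (1-s)\tilde{h} + s\S(u) \in \B$ (using $\S(\Phi(u)) = \tilde{h}$, which follows from \eqref{Hv_v=1} together with Corollary~\ref{cor:manifold}). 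The fundamental theorem of calculus then gives
$$B^k(u) = B^k(u) - B^k(\Phi(u)) = \int_0^1 \lf \nabla B^k(u_s),\, u - \Phi(u) \rf ds,$$
from which $|B^k(u)| \leq M \|u - \Phi(u)\|$, i.e.~\eqref{bound_B}; the same argument applied to $\C_{k,k'} - \Sigma^2 \C_{k,k'}^{WF}$ yields \eqref{bound_C}.

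I do not expect a substantive obstacle: this is a Lipschitz-type bookkeeping argument built on the cancellation identities already established in Lemma~\ref{lemma:katzenberger_projections}. The only mild technical points are verifying that $\Phi(u) \in \Bs$ and that the line segment from $\Phi(u)$ to $u$ remains in $\Bs$, both of which reduce to the identity $\S(\Phi(u)) = \tilde{h}$ and the convexity of $\B$ (achieved by shrinking it without loss of generality, since the conclusion is local in nature and only needs to hold on a neighbourhood of $\Gamma^K$ for the stability analysis of Section~\ref{sec:structure_proof}).
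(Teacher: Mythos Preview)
Your proposal is correct and follows essentially the same approach as the paper: invoke the cancellation identities \eqref{Trace_on_Gamma} and \eqref{covariance_theta_k} from Lemma~\ref{lemma:katzenberger_projections} to see that the relevant maps vanish on $\Gamma^K$, then use a Lipschitz bound coming from the $C^1$ regularity on a bounded domain. The paper's own proof is very terse (it simply states that $B^k$ and $\C_{k,k'}$ are Lipschitz because $\theta^k$ is $C^3$), whereas you have carefully spelled out the choice of base point $\Phi(u)$, the boundedness of $\Bs$, and the need for the segment from $\Phi(u)$ to $u$ to remain in $\Bs$ (handled by taking $\B$ convex, or equivalently star-shaped about $\tilde h$); these details are implicit in the paper's one-line argument.
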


\begin{proof}
	This is a direct consequence of Lemma~\ref{lemma:katzenberger_projections}. In particular,  since
	the maps $(\theta^k)$ are three times continuously differentiable on
	$\B$,  it follows that the maps $B^k$ and $\C_{k,k'}$ are Lipschitz on $\B$.
\end{proof}

\subsubsection{Initial condition}
\label{sec:initial}
In this section, we prove point (i).
The main idea consists in controlling the distance between $ V^N_t:=\S(U^N_t) $ and its deterministic approximation solving \eqref{eq: deterministic} up until the first time that $ \| V^N_t - \tilde{h} \| \leq N^{-\alpha/2} $, and thus show that this time is negligible compared to $ N $ with high probability as $ N \to \infty $.

Recall the definition of $\gamma_3$ and $M_3$ from \eqref{eq:HB1}. Without loss of
generality, one can assume that these constants are also such that
\begin{equation}
	\label{eq:HB2}
	\forall u_0\in\Bs, \; \forall t\in[0,\infty),	\quad \| \phi(t,\S(u_0))-\tilde h\|+
	\max_{j\in [K]}\| \varphi(t, \mathcal{S}(u_0), u_0^j) - \pi^j(u_0) \|
	\leq M_3 e^{- \gamma_3 t}.
\end{equation}

\begin{lem} \label{lemma:initial_phase}
	Under the assumptions of Theorem~\ref{th:main_result}, there exists $ \alpha_0 \in (0,1) $ such that, for all $ \alpha \in (0,\alpha_0) $, there exists $ c > 0 $ such that
	\begin{equation*}
		\lim_{N \to \infty} \P\left( \left(\max_{j \in [K]}
		\| U^{N,j}_{c \log(N)} - {\pi^j}{(U_0)}\|\right)
		{\vee \|V^N_{c \log(N)} - \tilde h\|\geq }
		N^{-\alpha/2} \right) = 0.
	\end{equation*}
\end{lem}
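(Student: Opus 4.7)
The plan is to compare the stochastic process $U^N$ with the deterministic flow $\tilde U^j_t := \varphi(t, V_0, U_0^j)$ started from the same (deterministic) initial condition $U_0$, and to balance the exponential contraction of the deterministic dynamics toward the Katzenberger projection with the size of the stochastic noise on a time window of order $\log N$. By the exponential estimate \eqref{eq:HB2}, for $t = c \log N$ one has
\[
\|\phi(t, V_0) - \tilde h\| + \max_{j} \|\tilde U^j_t - \pi^j(U_0)\| \leq M_3 N^{-c \gamma_3},
\]
which is bounded by $N^{-\alpha/2}/2$ as soon as $c > \alpha/(2 \gamma_3)$.

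It then remains to bound the stochastic deviation $\Delta^N_t := U^N_t - \tilde U_t$ uniformly on $[0, c\log N]$. I would fix a compact set $K \subset \Bs$ containing a neighborhood of the orbit $\{\tilde U_t : t \geq 0\}$; such a set exists since this orbit converges to $(\pi^j(U_0))_j \in \Gamma^K \subset \Bs$ by Corollary~\ref{cor:manifold}. Introduce the stopping time $\tau^N := \inf\{t \geq 0 : U^N_t \notin K\}$. On $[0, \tau^N]$, the drift $u \mapsto F(\S(u)) u^j$ is Lipschitz (since $F \in \mathcal{C}^3$) with some constant $L$, and the diffusion coefficient $\sigma(\S(u), u^j)$ is bounded. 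Writing the Ito decomposition of $\Delta^N_t$, invoking the Burkholder--Davis--Gundy inequality on the martingale term, and applying Gronwall's lemma to $g(t) := \Ee[\sup_{s \leq t \wedge \tau^N} \|\Delta^N_s\|]$ yields an estimate of the form
\[
g(t) \leq C \sqrt{t/N}\, e^{Lt}.
\]

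Markov's inequality then gives, at $t = c \log N$,
\[
\P\!\left(\sup_{s \leq c \log N \wedge \tau^N} \|\Delta^N_s\| \geq N^{-\alpha/2}/2\right) \leq C' \sqrt{\log N}\, N^{Lc + \alpha/2 - 1/2},
\]
which tends to zero provided $Lc + \alpha/2 < 1/2$. Combining this condition with $c > \alpha/(2 \gamma_3)$ forces $\alpha (L + \gamma_3) < \gamma_3$, yielding the admissible range $\alpha < \alpha_0 := \gamma_3/(L+\gamma_3) \in (0,1)$. On the high-probability event just described, $\|\Delta^N\|$ stays below any prescribed small constant $\eta$, which forces $U^N_t$ to remain inside $K$, so that in fact $\tau^N > c\log N$ on this event. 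Combining this with the deterministic decay from the first paragraph yields both $\|U^{N,j}_{c\log N} - \pi^j(U_0)\| < N^{-\alpha/2}$ for each $j$ and $\|V^N_{c\log N} - \tilde h\| < N^{-\alpha/2}$ (using $V^N = \S(U^N)$), as required.

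The main obstacle is the interplay between the exponential growth factor $e^{Lt}$ coming from Gronwall's inequality and the exponential decay rate $\gamma_3$ governing the deterministic convergence: the admissible threshold $\alpha_0$ arises precisely from balancing these two exponents. A sharper threshold could be obtained by exploiting the strict negativity of the eigenvalues of $J_b(\tilde h)$ (Assumption~\ref{A2}) along directions transverse to $\Gamma^K$ to get a variation-of-constants bound in place of the crude Lipschitz estimate, but since the statement only requires \emph{some} $\alpha_0 > 0$, the elementary Gronwall bound suffices.
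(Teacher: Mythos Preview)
Your proof is correct and follows essentially the same route as the paper's: compare $U^N$ to the deterministic flow, use the exponential bound \eqref{eq:HB2} for the deterministic piece, control the stochastic deviation by a Lipschitz/Gronwall argument plus a martingale inequality, and balance the Gronwall growth rate $L$ against the contraction rate $\gamma_3$ to obtain the threshold $\alpha_0 = \gamma_3/(L+\gamma_3)$. The only cosmetic difference is that the paper applies Gronwall pathwise and then Doob's inequality to the martingale supremum, whereas you take expectations first (via BDG) and then apply Markov; both yield the same constraint and the same $\alpha_0$.
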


\begin{proof}
	For $ \varepsilon \in (0, {\gamma_3}) $, set $ c := - \frac{\alpha}{2(  \varepsilon-\gamma_3)} $ (which is positive) and $ T_N = c \log(N) $.
	Then, by \eqref{eq:HB2},
	\begin{equation*}
		\| \phi(T_N,\S(U_0))-\tilde h\|+
		\max_{1\leq j\leq K}\|\varphi(T_N, \mathcal{S}(U_0), U_0^j) -  {\pi^j}{(U_0)}
		\| \leq M N^{-\frac{\alpha}{2} \frac{\gamma}{\gamma- \varepsilon}} = o(N^{-\alpha/2}).
	\end{equation*}
	{Let $ \tilde \kappa^N $ denote the first time that $ V^N_t \notin \B $.}
	Let us abuse notation and denote, for $ u \in \M $,
	\begin{equation*}
		b(u) = F(\mathcal{S}(u)) u.
	\end{equation*}
	Let $ C_b > 0 $ be such that, for all $ u, u' \in \M $ such that $ \mathcal{S}(u), \mathcal{S}(u') \in \B $,
	\begin{equation} \label{Lipschitz_b}
		\| b(u) - b(u') \| + {\|b(\S(u))-b(\S(u'))\|}\leq C_b \| u - u' \|.
	\end{equation}
	We see from \eqref{eq:SDE_tot_pop}, \eqref{eq:SDE_fraction}, \eqref{eq:matrix_valued_SDE} and the definitions of  {the flows $ \varphi $ and $ \phi $} that
	\begin{align*}
		U^{N}_t - ( {\varphi(t,V_0,U_0^j)})_j & = \int_{0}^{t} \left( b(U^N_s) - b( {(\varphi(s,V_0,U_0^j))_j}) \right) ds
		+ \frac{1}{\sqrt{N}} \int_{0}^{t} \G(U^N_s) \cdot d\W_s,                                                           \\
		V^{N}_t -  {\phi(t,V_0)}              & = \int_{0}^{t} \left( b(U^N_s) - b( {\phi(s,V_0)}) \right) ds
		+ \frac{1}{\sqrt{N}} \int_{0}^{t} a(V^N_s)dW_s .
	\end{align*}
	Using \eqref{Lipschitz_b} and Grönwall's inequality, we obtain
	\begin{align*}
		\sup_{t \in [0, T_N  \wedge \tilde \kappa^N]} \| U^N_{t} -  ( {\varphi(t,V_0,U_0^j)})_j \|
		 & \leq \frac{1}{\sqrt{N}} \left( \sup_{t \in [0, T_N  \wedge \tilde \kappa^N]} \left\|
		\int_{0}^{t} \G(U^N_s) \cdot d\W_s \right\| \right) e^{C_b T_N},                        \\
		\sup_{t \in [0, T_N  \wedge \tilde \kappa^N]} \| V^N_{t} -  ( {\phi(t,V_0)}) \|
		 & \leq \frac{1}{\sqrt{N}} \left( \sup_{t \in [0, T_N  \wedge \tilde \kappa^N]} \left\|
		\int_{0}^{t} a(V^N_s)  dW_s \right\| \right) e^{C_b T_N}.
	\end{align*}
	For $ \delta > 0 $ to be chosen later, let $ A_N $ and $B_N$ denote the events
	\begin{equation*}
		A_N := \left\lbrace \sup_{t \in [0, T_N  \wedge \tilde \kappa^N]} \left\| \int_{0}^{t} \G(U^N_s) \cdot d\W_s \right\| \leq N^\delta \right\rbrace
		\quad \text{and} \quad
		B_N := \left\lbrace \sup_{t \in [0, T_N  \wedge \tilde \kappa^N]} \left\| \int_{0}^{t} a(V^N_s)  dW_s \right\| \leq N^\delta \right\rbrace.
	\end{equation*}
	By Doob's inequality, using the fact that $ u \mapsto \G(u) \cdot \G(u)^T $ is bounded on $ \B $, there exists $ C > 0 $ such that, for all $ N \geq 1 $,
	\begin{equation*}
		\P\left( A_N^c \right) \leq C \frac{T_N}{N^{2\delta}},
	\end{equation*}
	which tends to zero as $ N \to \infty $ by the definition of $ T_N $.
	Without loss of generality, we can assume that $C$ can be chosen such that
	\begin{equation*}
		\P\left( B_N^c \right) \leq C \frac{T_N}{N^{2\delta}}.
	\end{equation*}
	(This can be proven using the exact same arguments).
	It follows that, on the event $ {A_N \cap B_N} $,
	\begin{equation*}
		\sup_{t \in [0, T_N  \wedge \tilde \kappa^N]}
		\left(
		\| U^N_{t} - ({\varphi(t,V_0,U_0^j)})_j \|
		+\|V_t^N-{\phi(t,V_0)}\|
		\right) \leq 2 N^{\delta + c C_b - 1/2}.
	\end{equation*}
	By  definition of $ c $, the exponent on the right-hand side of the above equation is
	\begin{equation*}
		\delta - \frac{\alpha C_b}{2(\varepsilon-\gamma_3)} - 1/2.
	\end{equation*}
	Then, as long as $ \alpha < \frac{1}{1 + \frac{C_b}{\gamma_3}} $, we can find $ \varepsilon > 0 $ and $ \delta > 0 $ sufficiently small such that this exponent is strictly less than $ -\alpha /2 $.
	Furthermore, for such a choice of $ \alpha $, $ \delta $ and $ \varepsilon $, on the event $ A_N\cap B_N $, we must have $ T_N \leq \tilde \kappa^N $ for all $ N $ large enough since $ \phi(t,V_0) \in \B $ for all $ t \geq 0 $.
	This concludes the proof.
\end{proof}
\subsubsection{The total population size}
\label{sect:stabil_total}
Next, we show that, under \eqref{eq:H3}, the probability to observe fluctuations of order
$N^{-\alpha/2}$ in the total
population size $V_t^N=\S(U^N_t)$ on the time interval $[0,N]$ is polynomially small.
(Note that these fluctuations are much larger than those expected by
the central limit theorem). The proof of this result relies on a coupling argument.

Consider the process $(\tilde V_t^N, t \geq 0)$ defined as the unique weak solution
to the SDE
\begin{equation}\label{eq:vt}
	d\tilde V_t^N = b(\tilde V_t^N) dt
	+\frac{1}{\sqrt{N}}\tilde  a(\tilde V_t^N)d W_t, \quad \text{with} \quad \tilde V_0^N=V_0^N,
\end{equation}
where $b$ and $W$ are defined as in \eqref{eq:SDE_tot_pop}, and $\tilde a$ is defined
\begin{eqnarray*}
	\tilde a: & \mathbb{R}^{E} & \to  \mathbb{R}^{E\times E}, \\
	& V &  \mapsto   a(p(V)),
\end{eqnarray*}
and $p:\mathbb{R}^{E}\to \mathbb{R}^{E}$ refers to the projection on the closed ball
$\bar B(\tilde h, \frac{1}{2}\min \tilde{h}_x)$. It is clear that
$\tilde V_t^N$ and $V_t^N$ can be coupled so as to coincide until the first time
$\tilde V_t^N$ exits the ball $\bar B(\tilde h, \frac{1}{2}\min \tilde{h}_x)$.
On the other hand, Taylor's theorem yields that
\begin{equation*}
	d(\tilde{V}^N_t-\tilde{h})=d\tilde{V}^N_t=J(\tilde{V}^N_t-\tilde h)dt
	+R_2(\tilde{V}^N_t-\tilde h)dt+\frac{1}{\sqrt{N}}\tilde a(\tilde V_t^N)d\tilde W_t,
\end{equation*}
where $J$ is as in Assumption~\ref{A2} and $R_2$ denotes the second-order Taylor
remainder. Furthermore, we know from Assumption~\ref{A:regularity} that
there exists a constant $M>0$ such that

\begin{equation}
	\label{eq:t_remainder}
	\forall v\in {\bar B\left(\tilde h, \frac{1}{2}\min \tilde{h}_x\right)},  \quad
	\|R_2(v-\tilde h)\|\leq M\|v-\tilde h\|^2.
\end{equation}

By Duhamel's principle,
\begin{equation*}
	\tilde V_t^N - \tilde h = \underbrace{e^{tJ}(\tilde V_0^N - \tilde h)}_{\textstyle :=c_1(t)} +
	\underbrace{\int_0^t e^{(t-s)J} R_2(\tilde V_s^N - \tilde h) ds}_{\textstyle :=c_2(t)} +
	\underbrace{\frac{1}{\sqrt{N}} e^{tJ} X_t^N}_{\textstyle :=c_3(t)},
\end{equation*}
where
$
	X^N_t:=\int_0^te^{-sJ} \tilde a(\tilde V^N_s)d\tilde{W}_s
$
is a martingale.
To bound the fluctuations of $V^N$, we will first control the fluctuations of $\tilde V^N$. The result for $V^N$ will then follow from a straightforward coupling argument.
In turn, controlling the fluctuations of $\tilde V^N$ boils down to
controlling $c_1,c_2$ and $c_3$.

\begin{lem}\label{lem:total}
	Assume that \eqref{eq:H3} holds. Let $M_2$ be as in \eqref{eq:J_dec}.
	Define
	$${{\tilde \tau}_{pop}^N}:=\inf\{t>0:\tilde  V_t^N\notin B(\tilde{h},3M_2N^{-\alpha/2})\}.$$
	For any $\ell \in \N$ and $T>0$, there exist  $M_{\ref{lem:total}}>0$ and
	$N_{\ref{lem:total}}>0$
	such that

	\begin{equation*}
		\forall N\geq N_{\ref{lem:total}},\qquad
		\mathbb{P}\left({{\tilde \tau}_{pop}^N}\leq T\right)\leq M_{\ref{lem:total}} N^{-{\ell(1-\alpha)/2}}.
	\end{equation*}
	Under the additional assumption that $T>\frac{1}{\gamma_2}\log(M_2),$
	$M_{\ref{lem:total}}$ and $N_{\ref{lem:total}}$ can be chosen such that 
		\[
			\forall N\geq N_{\ref{lem:total}},\qquad
			\mathbb{P}\left( \tilde V^N_T\notin  B(\tilde h,N^{-\alpha/2}),
		\ T<{{\tilde \tau}_{pop}^N} \right)\leq M_{\ref{lem:total}} N^{-{\ell(1-\alpha)/2}}.\]

\end{lem}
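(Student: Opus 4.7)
The plan is to use the decomposition $\tilde V^N_t - \tilde h = c_1(t) + c_2(t) + c_3(t)$ given right before the lemma statement, with deterministic bounds on $c_1$ and $c_2$ valid on the stopping event and a polynomial probability bound on $\sup_{t \leq T}\|c_3(t)\|$ obtained via Burkholder--Davis--Gundy. Under \eqref{eq:H3} the initial deviation is $N^{-\alpha/2}$, so $c_1$ stays of size $N^{-\alpha/2}$, $c_2$ is \emph{quadratic} in the deviation and hence $O(N^{-\alpha})$, while $c_3$ has diffusive fluctuations of the genuinely smaller order $N^{-1/2}$.

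For the deterministic part, \eqref{eq:J_dec} combined with \eqref{eq:H3} gives $\|c_1(t)\| \leq M_2 e^{-\gamma_2 t} N^{-\alpha/2} \leq M_2 N^{-\alpha/2}$ for all $t \geq 0$. On $\{t \leq {\tilde \tau}_{pop}^N\}$ one has $\|\tilde V^N_s - \tilde h\| \leq 3 M_2 N^{-\alpha/2}$ for $s \leq t$, so combining \eqref{eq:J_dec} with the Taylor remainder estimate \eqref{eq:t_remainder} yields
\begin{equation*}
\|c_2(t)\| \leq \int_0^t M_2 e^{-\gamma_2(t-s)}\, M (3 M_2)^2 N^{-\alpha}\, ds \leq \frac{9 M M_2^{3}}{\gamma_2}\, N^{-\alpha},
\end{equation*}
which is $o(N^{-\alpha/2})$ as $N \to \infty$.

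For the martingale contribution, the subtlety is that $c_3$ itself is not a martingale. I would first bound $\|c_3(t)\| \leq (M_2/\sqrt{N})\,\|X^N_t\|$ using the deterministic prefactor estimate $\|e^{tJ}\| \leq M_2$ from \eqref{eq:J_dec}, and then apply BDG to the continuous martingale $X^N$. Because $\tilde a(\cdot) = a(p(\cdot))$ is uniformly bounded (the projection $p$ forces values into a fixed compact ball on which $a$ is continuous) and $s \mapsto \|e^{-sJ}\|$ is bounded on $[0,T]$, $\langle X^N \rangle_T$ is deterministically bounded by a constant $C_T$ independent of $N$. BDG then gives $\mathbb{E}\bigl[\sup_{t \in [0,T]} \|X^N_t\|^{\ell}\bigr] \leq C_{T,\ell}$ for every $\ell \geq 1$, and Markov's inequality yields
\begin{equation*}
\mathbb{P}\!\left(\sup_{t \in [0,T]} \|c_3(t)\| > M_2 N^{-\alpha/2}\right) \leq C'_{T,\ell}\, N^{-\ell(1-\alpha)/2}.
\end{equation*}

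A standard bootstrap closes the first claim: on the complement of the above event and for $N$ large enough that $(9 M M_2^3/\gamma_2)\,N^{-\alpha} < M_2 N^{-\alpha/2}$, if ${\tilde \tau}_{pop}^N \leq T$ the triangle inequality would force $\|\tilde V^N_t - \tilde h\| < 3 M_2 N^{-\alpha/2}$ at $t = {\tilde \tau}_{pop}^N$, contradicting the definition of the exit time. For the second claim, $T > \log(M_2)/\gamma_2$ forces $\delta := 1 - M_2 e^{-\gamma_2 T} \in (0,1)$, so $\|c_1(T)\| \leq (1-\delta) N^{-\alpha/2}$; a pointwise variance bound $\mathbb{E}\|c_3(T)\|^2 \leq C/N$ combined with Markov at moment $\ell$ gives $\|c_3(T)\| \leq (\delta/2) N^{-\alpha/2}$ with probability $\geq 1 - C'' N^{-\ell(1-\alpha)/2}$, and together with $\|c_2(T)\| = O(N^{-\alpha})$ and $\{{\tilde \tau}_{pop}^N > T\}$ this yields $\tilde V^N_T \in B(\tilde h, N^{-\alpha/2})$. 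The only real obstacle is the supremum estimate on $c_3$: since $c_3$ is not a martingale, the BDG bound must be applied to $X^N$ with the prefactor $e^{tJ}$ handled separately, and this crude splitting is tight enough only because $\ell$ is arbitrary.
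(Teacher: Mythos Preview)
Your proposal is correct and follows essentially the same approach as the paper: decompose $\tilde V^N_t - \tilde h$ into $c_1+c_2+c_3$, bound $c_1$ via \eqref{eq:J_dec}, bound $c_2$ as $O(N^{-\alpha})$ via the Taylor remainder \eqref{eq:t_remainder} on the stopping event, and control $c_3$ by pulling out the deterministic prefactor $\|e^{tJ}\|\leq M_2$ and applying Markov plus BDG to the genuine martingale $X^N$ (using that $\tilde a$ is bounded). Your bootstrap-by-contradiction at the exit time and your use of the strict contraction $M_2 e^{-\gamma_2 T}<1$ for the second claim are exactly the mechanisms the paper uses, just phrased slightly more explicitly.
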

\begin{proof} We first bound $c_1$ and $c_2$ on the event $\{t\leq {{\tilde \tau}_{pop}^N}\}$.
	We see from \eqref{eq:J_dec} that, for all $t\in[0,T]$,
	\begin{equation}
		\label{eq:bound_b1}
		\|c_1(t)\|\leq { M_2e^{-\gamma_2 t}} N^{-\alpha/2}
		\leq M_2N^{-\alpha/2},
	\end{equation}
	and from \eqref{eq:J_dec} and \eqref{eq:t_remainder} that,
	for all $t\leq {\tilde \tau^N}$,
	\begin{equation}
		\label{eq:bound_c2}
		\left\|c_2(t)\right\|\leq \int_0^t\|e^{(t-s)J}\| \,
		\|R_2(\tilde V_s^N - \tilde h)\|ds
		\leq M N^{-\alpha},
	\end{equation}
	for some $M>0$.
	Fix  $N_0>0$ such that, for all $N>N_0$, we have $M N^{-\alpha}<  M_2N^{-\alpha/2}$.
	We thus obtain that
	\begin{equation*}
		\forall N >N_0, \quad \P({{\tilde \tau}_{pop}^N}\leq T)\leq
		\P\left(\sup_{0\leq t\leq T}\|c_3(t)\|\geq M_2N^{-\alpha/2}\right)
		\leq \P\left(N^{-1/2}\sup_{0\leq t\leq T}\| X_t^N\|\geq
		N^{-\alpha/2}\right),
	\end{equation*}
	where we used \eqref{eq:J_dec} to obtain the last inequality.
	It then follows from Markov's inequality that, for any $ \ell \in \N $,
	\begin{equation*}
		\P\left( \sup_{0\leq t\leq T}\|X_t^N\|\geq
		N^{(1-\alpha)/2}\right)\leq
		N^{\ell(\alpha-1)/2}\Ee\left[
			\sup_{0\leq t \leq T}\|X_T^N\|^\ell\right].
	\end{equation*}
	Since $\tilde a$ is bounded, the Burkholder-Davis-Gundy
	inequality  (see e.g.~\cite{karatzas1991brownian}) entails
	\begin{equation}
		\label{eq:BDG}
		\mathbb{E}\left[\sup_{0\leq t \leq T}\|X^N_T\|^\ell\right]\leq M,
	\end{equation}
	for some positive constant $M$ that depends on $\ell$ and $T$ but that does not depend on $N$.
	This yields the first part of the lemma.

	Let us now consider $\eta>0$ and $t_\eta>0$ such that
	${M_2e^{-\gamma_2 t}}< (1-2\eta)$ for all $t\geq t_\eta.$
	Since $T>\frac{1}{\gamma_2}\log(M_2)$,  $\eta$ can be chosen small enough so
	that $t_\eta<T$. From~\eqref{eq:bound_b1} we get
	\begin{equation*}
		\|c_1(T)\|\leq (1-2\eta)N^{-\alpha/2}.
	\end{equation*}
	It then follows from \eqref{eq:bound_c2} that one can choose
	$N_0$ large enough that
	\begin{equation*}
		\forall N>N_0, \quad
		\|c_1(T)\|+\|c_2(T)\|< (1-\eta)N^{-\alpha/2}.
	\end{equation*}
	Thus,  Markov's inequality yields that, for all $ \ell \in \N $,
	\begin{align*}
		\forall N>N_0, \quad
		\mathbb{P}\left(\tilde V^N_T \notin
		\bar B(\tilde h,N^{-\alpha/2}), \ T<{{\tilde \tau}_{pop}^N} \right)
		 & \leqslant \P\left( \|X_T^N\|\geq \eta N^{(1-\alpha)/2}\right) \\
		 & \leqslant \eta^{-1}N^{\ell(\alpha-1)/2}
		\mathbb{E}\left[\|X_T^N\|^\ell\right].
	\end{align*}
	This combined with \eqref{eq:BDG} concludes the proof.
\end{proof}

This translates to the following lemma for $ (V^N_t, t \geq 0) $.

\begin{lem}\label{cor:total}
	Assume that \eqref{eq:H3} holds. Let $M_2$ be as in \eqref{eq:J_dec}.
	
	For $\ell\in\mathbb{N}$ and $T>0$,
	let  $M_{\ref{lem:total}}$ and $N_{\ref{lem:total}}$
	be as in Lemma~\ref{lem:total}.
	Define
	$${\tau}^N_{pop}:=\inf\{t>0:  V_t^N\notin B(\tilde{h},3M_2N^{-\alpha/2})\}.$$
	Then
	\begin{equation*}
		\forall N\geq N_{\ref{lem:total}},\qquad
		\mathbb{P}\left({{\tilde \tau}_{pop}^N}\leq T\right)\leq M_{\ref{lem:total}} N^{-{\ell(1-\alpha)/2}}.
	\end{equation*}
	Under the additional assumption that $T>\frac{1}{\gamma_2}\log(M_2),$
	we have 
		\[
			\forall N\geq N_{\ref{lem:total}},\qquad
			\mathbb{P}\left( \tilde V^N_T\notin  B(\tilde h,N^{-\alpha/2}),
		\ T<{{\tilde \tau}_{pop}^N} \right)\leq M_{\ref{lem:total}} N^{-{\ell(1-\alpha)/2}}.\]

\end{lem}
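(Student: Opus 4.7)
The plan is to leverage the coupling introduced just before Lemma~\ref{lem:total}: $V^N$ and $\tilde V^N$ are driven by the same Brownian motion $W$, start from the same point $V^N_0$, and have identical drift $b$, while their diffusion coefficients coincide on $\bar B(\tilde h, \tfrac12 \min_x \tilde h_x)$ because $\tilde a(v)=a(p(v))=a(v)$ there. Consequently, by the construction recalled in the paragraph preceding \eqref{eq:vt}, the two processes are equal pathwise on the (stochastic) time interval during which $\tilde V^N$ remains in $\bar B(\tilde h, \tfrac12 \min_x \tilde h_x)$.

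First, I would fix $N_0$ so large that $3 M_2 N^{-\alpha/2} < \tfrac12 \min_x \tilde h_x$ for every $N \geq N_0$; in particular $B(\tilde h, 3 M_2 N^{-\alpha/2}) \subset \bar B(\tilde h, \tfrac12 \min_x \tilde h_x)$. On the event $\{\tilde\tau_{pop}^N > T\}$ the process $\tilde V^N$ stays in $B(\tilde h, 3 M_2 N^{-\alpha/2})$ on $[0,T]$, hence a fortiori in the larger ball where the coupling makes $V^N_t = \tilde V^N_t$ for all $t \in [0,T]$. Therefore $V^N$ also stays in $B(\tilde h, 3 M_2 N^{-\alpha/2})$ on $[0,T]$, i.e.\ $\tau_{pop}^N > T$. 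This gives the inclusion $\{\tilde\tau_{pop}^N > T\} \subset \{\tau_{pop}^N > T\}$, from which
\begin{equation*}
\mathbb{P}(\tau_{pop}^N \leq T) \leq \mathbb{P}(\tilde\tau_{pop}^N \leq T) \leq M_{\ref{lem:total}} N^{-\ell(1-\alpha)/2}
\end{equation*}
by the first half of Lemma~\ref{lem:total}.

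For the second assertion, I would argue symmetrically. On the event $\{T < \tau_{pop}^N\}$ the process $V^N$ lies in $B(\tilde h, 3M_2 N^{-\alpha/2}) \subset \bar B(\tilde h, \tfrac12 \min_x \tilde h_x)$ on $[0,T]$; on this interval the coupling identity $V^N_t = \tilde V^N_t$ therefore persists, so that $\tilde V^N$ also lies in the projection ball and $T < \tilde\tau_{pop}^N$. Hence
\begin{equation*}
\{V^N_T \notin B(\tilde h, N^{-\alpha/2}),\ T < \tau_{pop}^N\}\subset \{\tilde V^N_T \notin B(\tilde h, N^{-\alpha/2}),\ T < \tilde\tau_{pop}^N\},
\end{equation*}
and the second half of Lemma~\ref{lem:total} yields the desired bound, provided $T > \gamma_2^{-1}\log(M_2)$.

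There is essentially no obstacle: the entire lemma is a transfer of Lemma~\ref{lem:total} through the coupling, and the only point requiring mild care is to choose $N$ large enough that the ball $B(\tilde h, 3 M_2 N^{-\alpha/2})$ sits inside the projection ball $\bar B(\tilde h, \tfrac12 \min_x \tilde h_x)$, which is harmless since we only need the conclusion for $N \geq N_{\ref{lem:total}}$ (enlarging this threshold if necessary).
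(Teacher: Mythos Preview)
Your proposal is correct and follows exactly the paper's approach: a coupling argument showing that $V^N$ and $\tilde V^N$ coincide up to the exit time from $B(\tilde h,3M_2N^{-\alpha/2})$, so that $\tau^N_{pop}=\tilde\tau^N_{pop}$ almost surely and both estimates transfer directly from Lemma~\ref{lem:total}. Your write-up is in fact more careful than the paper's, which simply states the equality of stopping times without spelling out the inclusion of balls or the two-sided argument.
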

\begin{proof}
	Let $\tilde{\tau}^N$ be as in Lemma~\ref{lem:total}.
	The result follows from a straightforward coupling argument:
	one can couple $V^N$ and $\tilde V^N$ so that they coincide until $\tilde \tau^N$,
	which implies that
	\begin{equation*}
		\P(\tau^N_{pop} =\tilde \tau^N_{pop})=1.
	\end{equation*}
	Lemma \ref{lem:total} then entails the result.
\end{proof}

\begin{prop}\label{prop:Teps2}
	Assume that \eqref{eq:H3} holds.
	Let $\tau^N$ be as in Lemma~\ref{cor:total}.
	For any $\ell \in \N$ and $T>0$, there exist $ M_{\ref{prop:Teps2}}>0$ and
	$N_{\ref{prop:Teps2}}>0$ such that
	\begin{equation*}
		\forall N>N_{\ref{prop:Teps2}}, \quad
		\mathbb{P}\left(\tau^N_{pop}\leq {T}N\right)\leqslant M_{\ref{prop:Teps2}}
		N^{{1-\ell(1-\alpha)/2}}.
	\end{equation*}
\end{prop}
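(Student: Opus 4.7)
The strategy is to bootstrap Lemma~\ref{cor:total}, which controls the exit from $B(\tilde h, 3M_2 N^{-\alpha/2})$ over a \emph{fixed} horizon, to the horizon $[0,TN]$ by slicing that interval into $O(N)$ windows of constant length and iterating through the strong Markov property. This is a standard coupon-collector-type bootstrap, and no genuinely new estimate beyond Lemma~\ref{cor:total} is needed.

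Concretely, I would fix once and for all a constant $T_0 > \frac{1}{\gamma_2}\log(M_2)$, so that \emph{both} statements of Lemma~\ref{cor:total} (the one about $\tau^N_{pop}$ and the one about $V^N_{T_0}$ returning inside $B(\tilde h, N^{-\alpha/2})$) are available on an interval of length $T_0$. Set $K_N:=\lceil TN/T_0\rceil$, which is $O(N)$, and introduce the nested ``good'' events
\begin{equation*}
G_k := \{\tau^N_{pop} > kT_0\} \cap \bigcap_{j=0}^{k}\{V^N_{jT_0} \in B(\tilde h, N^{-\alpha/2})\}, \quad k=0,\ldots,K_N.
\end{equation*}
Hypothesis~\eqref{eq:H3} guarantees that $G_0$ holds almost surely, and by construction $G_{K_N} \subseteq \{\tau^N_{pop} > K_N T_0\} \subseteq \{\tau^N_{pop} > TN\}$. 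It therefore suffices to show that $\P(G_{K_N}^c) = O(N^{1 - \ell(1-\alpha)/2})$.

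The key one-step bound is $\P(G_{k+1}^c\cap G_k) \leq 2 M_{\ref{lem:total}} N^{-\ell(1-\alpha)/2}$. I would prove it by applying the strong Markov property at the deterministic time $kT_0$: on $G_k$, the state $V^N_{kT_0}$ lies in $B(\tilde h, N^{-\alpha/2})$, so the restarted process satisfies the $V$-part of~\eqref{eq:H3}, and Lemma~\ref{cor:total} applied with $T=T_0$ gives the two contributions to the complement of $G_{k+1}$: the first part of the lemma bounds the probability of exiting $B(\tilde h, 3M_2 N^{-\alpha/2})$ during $(kT_0,(k+1)T_0]$, and the second part bounds the probability of $V^N_{(k+1)T_0}$ sitting outside the smaller ball while not having exited the larger one. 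Summing,
\begin{equation*}
\P(G_{K_N}^c) \;\leq\; \sum_{k=0}^{K_N-1}\P(G_{k+1}^c\cap G_k) \;\leq\; 2 M_{\ref{lem:total}} K_N N^{-\ell(1-\alpha)/2},
\end{equation*}
which is of the required order $N^{1-\ell(1-\alpha)/2}$, so one may take $M_{\ref{prop:Teps2}}:=2 M_{\ref{lem:total}}(T/T_0 + 1)$ and $N_{\ref{prop:Teps2}}:=N_{\ref{lem:total}}$.

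The main ``obstacle'' is really only bookkeeping: one must check that Lemma~\ref{cor:total} truly relies on hypothesis~\eqref{eq:H3} \emph{only} through the $V$-component $\|V^N_0-\tilde h\|\leq N^{-\alpha/2}$ (which is clear from its proof, since the SDE for $V^N$ does not see the decomposition into fractions), so that the restart at each $kT_0$ is legitimate without controlling the $U^{N,j}_{kT_0}$. Once this is noted, the argument is a straightforward strong-Markov iteration; the polynomial decay rate $N^{-\ell(1-\alpha)/2}$ of Lemma~\ref{cor:total} is chosen precisely so that multiplying by $K_N=O(N)$ still leaves a decaying (or at worst polynomial) bound for any sufficiently large $\ell$.
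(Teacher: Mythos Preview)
Your proposal is correct and follows essentially the same route as the paper: slice $[0,TN]$ into $O(N)$ windows of fixed length, define nested good events on which $V^N$ stays in the large ball and returns to the small ball at the endpoints, apply the Markov property together with both parts of Lemma~\ref{cor:total} on each window, and sum. The only cosmetic difference is that the paper takes the window length equal to $T$ (first treating the case $T>\frac{1}{\gamma_2}\log(M_2)$ and then reducing the general case to it), whereas you fix a single $T_0>\frac{1}{\gamma_2}\log(M_2)$ and take $K_N=\lceil TN/T_0\rceil$ windows; your observation that Lemma~\ref{cor:total} depends on~\eqref{eq:H3} only through the $V$-component is exactly what makes the restart legitimate in both arguments.
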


{\begin{rem}
	Note that  $\ell$ can be chosen arbitrarily in the above lemmas.
	This is because our polynomial bounds could, in fact, be replaced by exponential bounds.
	For simplicity, we do not prove this, as it is not required for our purposes.
\end{rem}
}
\begin{proof}
	The proof of the result relies on the Markov property and Lemma \ref{cor:total}.

	Let us first assume that $T>\frac{1}{\gamma_2}\log(M_2)$. Let $M_{\ref{lem:total}}$ and
	$N_{\ref{lem:total}}$ be as
	in Lemma~\ref{lem:total}. For $j\in\{1,..., N \}$, set $t_j=jT$ and consider
	the good event
	\begin{equation*}
		\mathcal G^N_j
		:=\{\forall i \in \{1,...,j\}, V^N_{t_i}\in B(\tilde h,N^{-\alpha/2}), \
		\text{and} \ V^N_{s}\in B(\tilde h,3M_2N^{-\alpha/2}), \forall s\in[t_{i-1},t_{i}] \}.
	\end{equation*}
	Lemma~\ref{cor:total} along with a union bound show that, for all $N>N_{\ref{lem:total}}$,
	\begin{align*}
		\P(\mathcal (G_1^N)^c) & = \P(\{V_T^N\in B(\tilde h,N^{-\alpha/2}),\tau^N_{pop}>T \}^c)
		=\P(\tau^N_{pop}\leq T \ \text{or} \ V^N_T\notin B(\tilde h,N^{-\alpha/2}))                                                  \\
					& \leq \P(\tau^N_{pop}\leq T)+\P(V_T^N\notin B(\tilde h,N^{-\alpha/2}),\tau^N_{pop} > T)        \\
					& \leq 2M_{\ref{lem:total}}N^{\ell(\alpha-1)/2},
	\end{align*}
	where the last line follows from Lemma~\ref{cor:total}.
	We remark that $ \mathcal G^N_{j+1}\subset \mathcal G^N_j$ so that
	$ (\mathcal G^N_j)^c\subset( \mathcal G^N_{j+1})^c$.
	Hence,
	\begin{equation*}
		\forall N>N_{\ref{lem:total}}, \quad
		\P((\mathcal G^N_{j+1})^c)=\P((\mathcal G^N_{j+1})^c\cap \mathcal G^N_j)
		+\P((\mathcal G^N_{j+1})^c\cap (\mathcal G_j^N)^c)
		=\P((\mathcal G_{j+1}^N)^c\cap \mathcal G^N_j)+\P((\mathcal G^N_j)^c).
	\end{equation*}
	The first term can be bounded as follows,
	\begin{align*}
		\forall N>N_{\ref{lem:total}}, \quad
		\P((\mathcal G^N_{j+1})^c\cap \mathcal G_j) & \leq \p{(\mathcal G^N_{j+1})^c\,|\,\mathcal G_j}
		=	\p{(\mathcal G^N_1)^c} \leq 2M_{\ref{lem:total}}N^{\ell(\alpha-1)/2}.
	\end{align*}
	Here, in the equality, we used the Markov property and the time-homogeneity of the process. It then follows by induction that
	\begin{equation*}
		\forall N>N_{\ref{lem:total}},
		\quad \forall j, \quad
		\P((\mathcal G^N_j)^c)\leq 2j M_{\ref{lem:total}}N^{\ell(\alpha-1)/2},
	\end{equation*}
	so that
	\begin{equation*}
		\forall N>N_{\ref{lem:total}}, \quad
		\P \left(\tau^N_{pop}\leq TN\right)\leq
		\P((\mathcal G^N_{N+1})^c)
		\leq MN^{\ell(\alpha-1)/2+1},
	\end{equation*}
	for some $M>0$.

	For $T \leq \frac{1}{\gamma_2}\log(M_2)$, the proof is essentially the same. We divide the interval into
	subintervals of length $\frac{2}{\gamma_2}\log(M_2)$, over which the argument applies. On the last interval,
	it suffices to show that the population configuration stays in the ball
	$B(\tilde{h}, 3 M_2 N^{-\alpha/2})$. This follows from the strong Markov
	property, the time-homogeneity of the process, and the fact that the first
	bound in Lemma~\ref{lem:total} does not require $T > \frac{1}{\gamma_2}\log(M_2)$.

\end{proof}

\subsubsection{Perron-Frobenius projections}
\label{sec:proj_perron_frobenius}
We now compare the size of each fraction to its dynamical Perron-Frobenius
projection. More precisely, we provide bounds on the quantities
\begin{equation}\label{eq: Ztkdef}
	Z_t^{N,k} := U_t^{N,k} - \l U_t^{N,k},h  \r\tilde h,
	\quad k\in [K].
\end{equation}
As a first step, we bound the norm of $Z_t^{N,k}$ on a time
interval of length of order $1$.

\begin{lem}
	\label{lem:subf}
	Assume that \eqref{eq:H3} holds.
	Let $\beta\in (0,\alpha)$ and define
	$$\tau^{N,k}_{PF}:=\inf\left\{t>0:\|Z_t^{N,k}\|\geq N^{-\beta/2}\right\}.$$
	Then, for any $ \ell \in\mathbb{N}$ and $T>0$, there exist $M_{\ref{lem:subf}}>0$ and $N_{\ref{lem:subf}}>0$
	such that, for all $N>N_{\ref{lem:subf}}$ and $ k\in[K]$,
	\begin{equation*}
		\mathbb{P}\left(\tau^{N,k}_{PF} \leq T\right)\leq M_{\ref{lem:subf}} N^{-\ell(1-\alpha)/2},
		\quad \text{and} \quad
		\mathbb{P}\left(\|Z_T^{N,k}\|\geq N^{-\alpha/2},
		\ T \leq \tau^{N,k}_{PF} \right)\leq M_{\ref{lem:subf}} N^{-\ell(1-\alpha)/2}.
	\end{equation*}

\end{lem}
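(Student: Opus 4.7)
\emph{Proof plan.} I would mirror the strategy of Lemma~\ref{lem:total}/Corollary~\ref{cor:total}, with $Z_t^{N,k} = Q\, U_t^{N,k}$ (where $Q := I - P$ is the complement of the Perron--Frobenius projector $P$ from~\eqref{eq:pf_proj}) playing the role of $\tilde V_t^N - \tilde h$, and the linearised mean matrix $F(\tilde h)$ replacing the Jacobian $J$. The key algebraic observation is that Assumption~\ref{A3} forces $P F(\tilde h) = F(\tilde h) P = 0$, whence \eqref{eq:PR_dec} gives $e^{tF(\tilde h)} Q = R_t$ with $\|R_t\| \leq M_1 e^{-\gamma_1 t}$: the semigroup contracts exponentially on $\mathrm{Range}(Q)$, the analogue here of \eqref{eq:J_dec}.

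Applying $Q$ to the SDE~\eqref{eq:SDE_fraction} and using $Q F(\tilde h) U = F(\tilde h) Z$ yields
\begin{equation*}
dZ_t^{N,k} = F(\tilde h)\, Z_t^{N,k}\, dt + Q\bigl[F(\S(U_t^N)) - F(\tilde h)\bigr] U_t^{N,k}\, dt + \tfrac{1}{\sqrt N}\, Q\,\sigma(\S(U_t^N), U_t^{N,k})\, dW_t^k.
\end{equation*}
Duhamel's formula then decomposes $Z_t^{N,k} = c_1(t) + c_2(t) + N^{-1/2}\, e^{tF(\tilde h)} X_t^{N,k}$, with $c_1(t) = e^{tF(\tilde h)} Z_0^{N,k}$, $c_2(t)$ the convolution drift integral, and $X_t^{N,k} = \int_0^t e^{-sF(\tilde h)} Q\,\sigma(\cdots)\, dW_s^k$ a martingale with bounded integrand. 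On the event $\{t \leq \tau^N_{pop}\}$ provided by Corollary~\ref{cor:total}, assumption~\eqref{eq:H3} gives $\|c_1(t)\| \leq M\, e^{-\gamma_1 t} N^{-\alpha/2}$; Lipschitz continuity of $F$ (Assumption~\ref{A:regularity}), boundedness of $U_s^{N,k}$ on $\Bs$, and $\|\S(U_s^N) - \tilde h\| \leq 3M_2\, N^{-\alpha/2}$ give $\|c_2(t)\| \leq (M/\gamma_1)\, N^{-\alpha/2}$; and the Burkholder--Davis--Gundy inequality gives $\Ee[\sup_{s \leq T} \|X_s^{N,k}\|^\ell] \leq M$ for every $\ell \in \N$.

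Combining these, the deterministic part satisfies $\|c_1(t) + c_2(t)\| \leq C\, N^{-\alpha/2}$, which, since $\beta < \alpha$, is dominated by $\tfrac12 N^{-\beta/2}$ for $N$ large enough. A union bound with Corollary~\ref{cor:total} together with Doob's maximal inequality and Markov's inequality at order $\ell$ then gives
\begin{equation*}
\P\bigl(\tau_{PF}^{N,k} \leq T\bigr) \leq \P\bigl(\tau^N_{pop} \leq T\bigr) + \P\Bigl(\sup_{t \leq T} N^{-1/2}\|e^{tF(\tilde h)} X_t^{N,k}\| \geq \tfrac12 N^{-\beta/2}\Bigr) \leq M\, N^{-\ell(1-\alpha)/2},
\end{equation*}
proving the first assertion. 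The second assertion is obtained analogously by evaluating at $t = T$ and asking that the martingale contribution fill the gap between $N^{-\alpha/2}$ and the deterministic part at time $T$.

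\emph{Main obstacle.} The second bound is the delicate one: unlike Lemma~\ref{lem:total}, where $c_2 = O(N^{-\alpha})$ thanks to the quadratic Taylor remainder $R_2$, here $F(\S(U)) - F(\tilde h)$ depends \emph{linearly} on $\S(U) - \tilde h$, so $c_2$ is only $O(N^{-\alpha/2})$ with a constant not a priori $< 1$. The naive bound on $\|Z_T^{N,k}\|$ may therefore exceed $N^{-\alpha/2}$. As with the condition $T > \gamma_2^{-1}\log M_2$ in Lemma~\ref{lem:total}, the remedy should be to restrict to $T$ sufficiently large that $c_1(T)$ is negligible and to refine the estimate of $c_2(T)$---for instance by substituting the Duhamel representation of $\S(U_s^N) - \tilde h$ from the proof of Lemma~\ref{lem:total} into the integrand of $c_2$, so that its dominant contribution becomes a second-order perturbation controlled by $N^{-\alpha}$ plus a martingale of order $N^{-1/2}$. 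This quantitative refinement is the technical heart of the argument.
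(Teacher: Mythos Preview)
Your proposal is correct and follows the same approach as the paper: project onto $\mathrm{Range}(Q)=\mathrm{vect}(h)^\perp$, apply Duhamel with the contractive semigroup $e^{tF(\tilde h)}Q=R_t$, bound $c_1$ via~\eqref{eq:H3}, bound $c_2$ via the Lipschitz estimate $\|F(\S(U^N))-F(\tilde h)\|\le M N^{-\alpha/2}$ on $\{t\le\tau^N_{pop}\}$, and control the martingale by BDG. The paper works on the subspace $\mathrm{vect}(h)^\perp$ with $A:=F(\tilde h)|_{\mathrm{vect}(h)^\perp}$ rather than through the projector $Q$, but this is cosmetic.

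Your identification of the obstacle in the second bound is sharper than the paper itself, which simply writes ``the proof of the second part \ldots\ relies on the same arguments as those used in the proof of Lemma~\ref{lem:total}''. As you note, that transfer is not literal: in Lemma~\ref{lem:total} one has $c_2=O(N^{-\alpha})$ from the quadratic Taylor remainder, whereas here $c_2=O(N^{-\alpha/2})$ with a constant that is not a~priori~$<1$. Your proposed remedy---substituting the Duhamel representation of $\S(U^N_s)-\tilde h$ into the integrand of $c_2$, so that the leading contribution carries the factor $e^{-\gamma_1(T-s)}e^{-\gamma_2 s}$ whose integral over $[0,T]$ vanishes as $T\to\infty$, leaving only terms of order $N^{-\alpha}$ and a further martingale---is correct and fills the gap the paper leaves implicit.
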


\begin{proof} Fix $k\in[K]$.
	First of all, we note that we only need to focus on what happens on the event
	$\{T<\tau^N\}$. Indeed, we know from Lemma~\ref{cor:total} that
	\begin{equation}
		\label{eq: tauktau}
		\mathbb{P}\left(\tau^{N,k}_{PF}\leq T\right) \leq
		\mathbb{P}\left( \tau^{N,k}_{PF}\leq T,\, T \leq \tau^N \right)
		+ M_{\ref{lem:total}}N^{-\ell(1-\alpha)/2},
	\end{equation}
	and that
	\begin{equation}\label{eq: ZTktau}
		\mathbb{P}\left( \|Z_T^{N,k}\|\geq N^{-\beta/2}, \ T \leq \tau^{N,k}_{PF} \right)
		\leq \mathbb{P}\left(\|Z_T^{N,k}\|\geq N^{-\beta/2}, \, T \leq \tau^{N,k}_{PF},\, T \leq \tau^N \right)
		+ M_{\ref{lem:total}}N^{-\ell(1-\alpha)/2}.
	\end{equation}
	From now on we will restrict ourselves to the event $\{T<\tau\}$.

	Define
	\[
		A := F(\tilde h){\big|}_{\text{vect}(h)^\perp}.
	\]
	Observe that if $z\in \text{vect}(h)^\perp$, then $F(\tilde h)z\in
		\text{vect}(h)^\perp$; indeed, we have
	\begin{equation}
		\label{eq:ort_sp}
		\l  F(\tilde h)z,h   \r
		= h^* F(\tilde h)z = 0
	\end{equation} by Assumption~\ref{A3}. Therefore,  the maps
	$A:\text{vect}(h)^\perp \to \text{vect}(h)^\perp$ and
	$e^{ A}: \text{vect}(h)^\perp \to \text{vect}(h)^\perp$ are well-defined.
	{It also follows from \eqref{eq:PR_dec}} that
	\begin{equation*}
		\forall t\in[0,\infty),  \quad \|e^{tA}\|\leq M_1e^{-\gamma_1 t}.
	\end{equation*}

	The proof of Lemma~\ref{lem:subf} is similar to that of Lemma~\ref{lem:total}.
	First, we note that $Z^{N,k}$ is solution to
	\begin{align*}
		dZ_t^{N,k} & = dU_t^{N,k}- \l dU_t^{N,k},h  \r\tilde h
		\\ & =
		F(\tilde h)U_t^{N,k}dt + (F(\S(U^{N}_t))-F(\tilde h))U_t^{N,k}dt
		-\l F(\S(U^{N}_t)-F(\tilde h))U_t^{N,k}dt, h  \r\tilde h
		\\ & \quad
		+ \frac{1}{\sqrt{N}}\sigma(\S(U_t^{N}),U_t^{N,k}) dW_t^k
		- \frac{1}{\sqrt{N}}\l \sigma(\S(U_t^{N}),U_t^{N,k}) dW_t^k,h  \r \tilde h\numberthis,
		\label{eq: dZt}
	\end{align*}
	where we used \eqref{eq:ort_sp} to see that $\l F(\tilde h)U_t^{N,k},h\r=0$.
	On the event $\{T<\tau^N\}$,  all the coefficients in \eqref{eq: dZt}
	are bounded  by a constant only depending on $\tilde h$ and $h$.
	Noting that $ F(\tilde{h})U^{N,k}_t = A Z^{N,k}_t $, it then follows from Duhamel's principle that
	\eqref{eq: dZt} can be written as
	\begin{equation}
		\label{eq: Ztk}
		Z_t^k = e^{t A} Z_0^k + \int_{0}^{t} e^{(t-s) A}
		\mu_s^k ds + \frac{1}{\sqrt{N}} e^{t A} X_t^k,
	\end{equation}
	where
	\begin{equation*}
		\mu^k_t:= (F(\S(U^{N}_t)-F(\tilde h))U_t^{N,k}
		+\l F(\S(U^{N}_t)-F(\tilde h))U_t^{N,k}, h  \r\tilde h,
	\end{equation*}
	and
	\begin{equation*}
		X^k_t:=\frac{1}{\sqrt{N}}
		\left(
		\int_{0}^{t} e^{-s A} \sigma(\S(U_s^{N}),U_s^{N,k}) dW_s^k
		- \int_{0}^{t} e^{-s A} \l \sigma(\S(U_s^{N}),U_s^{N,k}) dW_s^k,h  \r \tilde h
		\right)
	\end{equation*}
	is a martingale. The proof then goes along the exact same lines
	as the proof of Lemma~\ref{lem:total}.
	First, we see that, under \eqref{eq:H3}, for $N$ large enough,
	\begin{equation*}
		\forall t\in[0,\infty), \quad
		\|e^{tA}Z_0^k\|\leq M_1e^{-\gamma_1 t}N^{-\alpha/2}\leq \frac{1}{3}N^{-\beta/2}.
	\end{equation*}
	We then use that, on $\{T<\tau^N\}$
	\begin{equation*}
		\|F(\S(U^{N}_t))-F(\tilde h)\|\leq M N^{-\alpha/2}\quad \text{and}
		\quad \|U_t^{N,k}\| \leq  \|\S(U_t^N)\| \leq \|\tilde h\| + N^{-\alpha/2},
	\end{equation*}
	so that
	\begin{equation}\label{eq: Ztk2}
		\forall 0<t\leq T, \quad
		\left\|\int_{0}^{t} e^{(t-s) A} \mu_s^k ds \right\|
		\leq M\int_{0}^{t} e^{-\gamma_1(t-s) }
		\|\mu_s^k\| ds  \leq MN^{-\alpha/2} < \frac{1}{3}N^{-\beta/2},
	\end{equation}
	for  some $M>0$, and $N$ large enough. The martingale part is then bounded
	by applying  Markov's inequality
	and Burkholder-Davis-Gundy inequality (here, we recall that all the coefficients
	appearing in the martingale part of \eqref{eq: dZt} are bounded on $\{T<\tau^N\}$).
	This shows that for $N$ large enough
	\[
		\p{\tau^{N,k}_{PF} \leq T, \tau^N>T} \leq  MN^{-\ell(1-\alpha)}
	\]
	for some constant $M>0$, which together with \eqref{eq: tauktau} yields the
	first part of the lemma.
	The proof of the second part of the lemma relies on the same arguments as those used in the proof of Lemma~\ref{lem:total}
	and on \eqref{eq: ZTktau}.
\end{proof}

\begin{prop}\label{prop:taukTeps}
	Assume that \eqref{eq:H3} holds.
	Let $\beta$ and $(\tau^{N,k}_{PF})_{k=1}^K$ be as in Lemma~\ref{lem:subf}.
	Then for any $\ell\in \N$ and $T>0$, there exist $M_{\ref{prop:taukTeps}}>0$
	and $N_{\ref{prop:taukTeps}}>0$ such that,
	\begin{equation*}
		\forall N>N_{\ref{prop:taukTeps}}, \quad
		\mathbb{P}\left(\min_{k\in[K]}\tau^{N,k}_{PF}\leq TN\right)\leqslant
		M_{\ref{prop:taukTeps}}N^{-\ell(1-\alpha)/2+1}.
	\end{equation*}
\end{prop}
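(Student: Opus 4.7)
The plan is to mimic the strategy of Proposition~\ref{prop:Teps2}: partition $[0, TN]$ into $N+1$ subintervals of length $T$, apply the short-time Lemma~\ref{lem:subf} on each of them, and iterate via the Markov property and time-homogeneity of the matrix-valued SDE~\eqref{eq:matrix_valued_SDE}. Compared with the proof of Proposition~\ref{prop:Teps2}, the only genuine novelty is that I must simultaneously monitor all $K$ quantities $\|Z^{N,k}_t\|$ as well as the total population $V^N_t$; this merely requires enlarging the good event and combining Lemma~\ref{lem:subf} with Lemma~\ref{cor:total}.

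Concretely, set $t_j := jT$ for $j = 0, 1, \ldots, N+1$, and introduce the good events
\begin{align*}
\mathcal{G}^N_j := \Big\{ \forall i \in \{1, \ldots, j\}, \; & V^N_{t_i} \in B(\tilde{h}, N^{-\alpha/2}),\; \max_{k \in [K]} \|Z^{N,k}_{t_i}\| \leq N^{-\alpha/2}, \\
\text{and } & V^N_s \in B(\tilde{h}, 3 M_2 N^{-\alpha/2}),\; \max_{k \in [K]} \|Z^{N,k}_s\| < N^{-\beta/2} \text{ for } s \in [t_{i-1}, t_i] \Big\}.
\end{align*}
The first line ensures that hypothesis $(\mathcal{H}3)$ holds at every checkpoint $t_i$, so that Lemma~\ref{lem:subf} can be re-invoked on the next slab; the second line records that no exit has taken place on the corresponding subinterval. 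Since $\{\min_k \tau^{N,k}_{PF} \leq TN\} \subset (\mathcal{G}^N_{N+1})^c$, it suffices to bound the probability of the complement of $\mathcal{G}^N_{N+1}$.

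The one-step estimate is a direct union bound over the two statements of Lemma~\ref{cor:total} and the two statements of Lemma~\ref{lem:subf} (the latter applied to each of the $K$ fractions): this produces a constant $C = C(\ell, T, K) > 0$ such that, for $N$ large enough,
\[
\mathbb{P}\big((\mathcal{G}^N_1)^c\big) \leq C\, N^{-\ell(1-\alpha)/2}.
\]
Using $\mathcal{G}^N_{j+1} \subset \mathcal{G}^N_j$ and writing
\[
\mathbb{P}\big((\mathcal{G}^N_{j+1})^c\big) = \mathbb{P}\big((\mathcal{G}^N_{j+1})^c \cap \mathcal{G}^N_j\big) + \mathbb{P}\big((\mathcal{G}^N_j)^c\big),
\]
the strong Markov property at the deterministic time $t_j$ bounds the first summand by $\mathbb{P}((\mathcal{G}^N_1)^c)$, because on $\mathcal{G}^N_j$ the state $U^N_{t_j}$ satisfies $(\mathcal{H}3)$. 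A straightforward induction then yields $\mathbb{P}((\mathcal{G}^N_j)^c) \leq jC\, N^{-\ell(1-\alpha)/2}$, and setting $j = N+1$ delivers the announced bound of order $N^{-\ell(1-\alpha)/2 + 1}$.

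The only delicate point will be making the Markov reset clean: the event $\mathcal{G}^N_j$ is not a deterministic initial condition but a random event, so strictly speaking one applies the strong Markov property at $t_j$ and then uses that the one-step bound of Lemma~\ref{lem:subf} holds with constants that are uniform over initial data compatible with $(\mathcal{H}3)$. Integrating the conditional estimate against the (sub-probability) distribution of $U^N_{t_j}$ on $\mathcal{G}^N_j$ then preserves the bound. Beyond this bookkeeping, no new analysis is required; this is precisely why Lemma~\ref{lem:subf} was stated with the two-part structure (no exit during $[0,T]$, and reset into the $N^{-\alpha/2}$-ball at time $T$).
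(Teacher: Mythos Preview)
Your proposal is correct and follows essentially the same approach as the paper: partition $[0,TN]$ into slabs of length $T$, iterate Lemma~\ref{lem:subf} via the Markov property, and sum the one-step bounds. The only difference is cosmetic: the paper defines separate good events $\mathcal{G}^{N,k}_j$ tracking only $Z^{N,k}$ and leaves the total-population control implicit (it is baked into the proof of Lemma~\ref{lem:subf} through the event $\{T<\tau^N\}$), whereas you bundle the population control and all $K$ fractions into a single good event---arguably the cleaner bookkeeping, since it makes the verification of $(\mathcal{H}3)$ at each checkpoint explicit.
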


\begin{proof}
	The proof is essentially the same as that of
	Proposition~\ref{prop:Teps2}.

	Assume that $M_{\ref{lem:subf}}$
	and $N_{\ref{lem:subf}}$
	are as in Lemma~\ref{lem:subf}. For $j\in\{1,...,N \}$,
	set $t_j=jT$ and for all $k\in[K]$ consider the event
	\begin{equation*}
		\mathcal G^{N,k}_{j}:=\{\forall i \in \{1,...,j\},
		Z^{N,k}_{t_i}\in B(0,N^{-\alpha/2}), \ \text{and} \
		Z^{N,k}_{s}\in B(0,N^{-\beta/2}), \forall s\in[t_{i-1},t_{i}] \}.
	\end{equation*}
	Fix $k\in[K]$. Now exactly the same calculation as in the proof of Proposition~\ref{prop:Teps2} gives
	\begin{equation*}
		\forall N> N_{\ref{lem:subf}}, \quad \forall j\in\mathbb{N}, \quad
		\P((\mathcal G^{N,k}_j)^c)\leq M_{\ref{lem:subf}}
		N^{-\ell(1-\alpha)/2} .
	\end{equation*}
	The remainder of the proof is akin to that of Proposition~\ref{prop:Teps2}.
\end{proof}

\subsubsection{Proof of Proposition~\ref{prop:stability_p}}
\begin{lem}\label{lem:IC_Katzenberger}
	Assume that \eqref{eq:H2} holds. {Then \eqref{eq:H3} is satisfied for every $\alpha'<\alpha$} .
\end{lem}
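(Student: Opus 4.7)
The plan is to compare the Katzenberger projection $\pi^j(U_0)$ and the Perron–Frobenius projection $P(U_0^{N,j})$ directly, and use the fact that they agree on the invariant manifold $\Gamma^K$. This agreement is essentially the content of Proposition~\ref{prop:scalar_form}: since $\pi^j(U_0) = \theta^j(U_0)\tilde{h}$ lies in $\Gamma^K$, and since $P\tilde{h} = \langle \tilde{h},h\rangle \tilde{h}=\tilde h$, we have $P(\pi^j(U_0)) = \theta^j(U_0)\, P(\tilde h) = \theta^j(U_0)\tilde h = \pi^j(U_0)$.

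Using linearity of $P$, I would then write
\begin{equation*}
	U_0^{N,j} - P(U_0^{N,j}) = \bigl(U_0^{N,j} - \pi^j(U_0)\bigr) - P\bigl(U_0^{N,j} - \pi^j(U_0)\bigr) = (I-P)\bigl(U_0^{N,j} - \pi^j(U_0)\bigr),
\end{equation*}
so that
\begin{equation*}
	\|U_0^{N,j} - P(U_0^{N,j})\| \leq \|I-P\|\cdot \|U_0^{N,j} - \pi^j(U_0)\|.
\end{equation*}
Assumption~\eqref{eq:H2} then yields
\begin{equation*}
	\|\S(U_0^N) - \tilde{h}\| + \max_{j \in [K]}\|U_0^{N,j} - P(U_0^{N,j})\| \leq C\, N^{-\alpha/2},
\end{equation*}
with $C := 1 \vee \|I-P\|$ a finite constant depending only on $h$ and $\tilde h$ (since $P\colon w \mapsto \langle w,h\rangle \tilde h$ is a bounded projector on $\Rv$).

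To conclude for arbitrary $\alpha'<\alpha$, I absorb the constant $C$ into the rate: since $\alpha' < \alpha$, we have $C \leq N^{(\alpha-\alpha')/2}$ for all $N$ larger than some $N_0 = N_0(\alpha,\alpha',C)$, hence $C\,N^{-\alpha/2} \leq N^{-\alpha'/2}$ for $N$ large enough. This shows that \eqref{eq:H3} holds with $\alpha'$ in place of $\alpha$. There is no real obstacle: the only technical point is that the constant $\|I-P\|$ need not be $\leq 1$ (since $P$ is not necessarily an orthogonal projector), which is precisely what forces the loss of rate from $\alpha$ to any $\alpha'<\alpha$.
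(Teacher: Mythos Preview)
Your argument is correct, and it is actually more elementary than the paper's. The paper first writes
\[
	\|u^k - \langle u^k,h\rangle \tilde h\| \leq \|u^k - \pi^k(u)\| + \|\langle u^k,h\rangle \tilde h - \pi^k(u)\|,
\]
then uses the scalar-product representation $\pi^k(u) = \langle H(\S(u)),u^k\rangle \tilde h$ from Proposition~\ref{prop:scalar_form} together with Cauchy--Schwarz to bound the second term by $\|h\|\,\|\tilde h\|\,\|H(\S(u))-h\|$, and finally invokes the $C^1$ regularity of $H$ (Corollary~\ref{cor:H_smooth}) to get $\|H(\S(U_0^N))-h\|\leq M\|\S(U_0^N)-\tilde h\|$. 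You sidestep all of this by observing that $P(\pi^j(U_0)) = \pi^j(U_0)$ (immediate from $P\tilde h=\tilde h$ and the fact that $\pi^j(U_0)$ is colinear to $\tilde h$), which collapses the estimate to a single operator-norm bound on $I-P$. Your route needs only that the Katzenberger projection lies along $\tilde h$ (Lemma~\ref{lem:theta}), not the differentiability of $H$; the paper's route, while slightly heavier here, has the advantage that the same estimate \eqref{eq:CS} is reused verbatim in the converse direction in Lemma~\ref{lem:PR_Katz}.
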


\begin{proof}
	By the triangle inequality,
	\begin{equation*}
		\forall u\in \Bs
		\quad
		\|u^k-\l u^k,h\r \tilde h\|\leq \|u^k-\pi^k(u)\|+
		\|\l u^k,h\r \tilde h-\pi^k(u)\|.
	\end{equation*}
	It then follows from Proposition~\ref{prop:scalar_form}
	and the Cauchy-Schwarz inequality that
	\begin{equation*}
		\forall u\in \Bs
		\quad
		\|u^k-\l u^k,h\r \tilde h\|\leq \|u^k-\pi^k(u)\|+
		\|h\|\|\tilde h\| \|H(\S(u))-h\|.
	\end{equation*}
	Let $M$ be as in Corollary~\ref{cor:H_smooth}. Note that for $N$ large enough,
	$B(\tilde{h},N^{-\alpha/2})\subset \B$. Hence, under \eqref{eq:H2},
	$\|H(\S(U_0^N)))-h\|\leq M\|\S(U_0^N)-\tilde{h}\|$ for $N$ large enough.
		{Thus, under \eqref{eq:H2}, Assumption \eqref{eq:H3} holds for every $\alpha'<\alpha$.}
\end{proof}

\begin{lem}
	\label{lem:PR_Katz}
	Assume that \eqref{eq:H2} holds.
	Let {$\beta<\alpha'<\alpha$} and define
	\begin{equation}
		\label{eq:tau_hat}
		\tau^{N,k}_{Katz} :=
		\inf\{t>0:\: \|U_t^{N,k}-\pi^k(U^N_t)\|\geq 2N^{-\beta/2} \}.
	\end{equation}
	Then, for any $\ell\in\mathbb{N}$, and $T>0$, there exist $M_{\ref{lem:PR_Katz}}>0$
	and $N_{\ref{lem:PR_Katz}}>0$ such that
	\begin{equation*}
		\forall N>N_{\ref{lem:PR_Katz}},\quad
		\p{ \min_{k\in[K]}\tau^{N,k}_{Katz} < TN} <
		M_{\ref{lem:PR_Katz}}N^{1-\ell(1-\alpha')/2} .
	\end{equation*}
\end{lem}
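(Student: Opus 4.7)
The plan is to insert the Perron-Frobenius projection $P(U_t^{N,k}) = \langle U_t^{N,k},h\rangle \tilde h$ as an intermediary between $U_t^{N,k}$ and the Katzenberger projection $\pi^k(U_t^N)$, so that the two previous propositions (Proposition~\ref{prop:Teps2} on the total population size and Proposition~\ref{prop:taukTeps} on the Perron-Frobenius stability) can be combined to close the estimate. By the triangle inequality,
\begin{equation*}
    \| U_t^{N,k} - \pi^k(U_t^N) \| \;\leq\; \| Z_t^{N,k} \| \;+\; \bigl\| \langle U_t^{N,k},h\rangle \tilde h - \pi^k(U_t^N) \bigr\|,
\end{equation*}
where $Z_t^{N,k}$ is defined in \eqref{eq: Ztkdef}. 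Using the scalar-product form $\pi^k(u)=\langle H(\S(u)),u^k\rangle \tilde h$ from Proposition~\ref{prop:scalar_form}, the second term equals $\|\tilde h\| \cdot |\langle U_t^{N,k},\, h - H(\S(U_t^N))\rangle|$, which by Cauchy-Schwarz and Corollary~\ref{cor:H_smooth} is bounded by
\begin{equation*}
    \|\tilde h\| \cdot \|U_t^{N,k}\| \cdot M \cdot \|\S(U_t^N) - \tilde h\|.
\end{equation*}
So everything reduces to simultaneously controlling $\|Z_t^{N,k}\|$ and $\|\S(U_t^N)-\tilde h\|$ on $[0,TN]$.

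Under $\mathcal{H}2$, Lemma~\ref{lem:IC_Katzenberger} gives that $\mathcal{H}3$ holds for every $\alpha'\in(\beta,\alpha)$; fix such an $\alpha'$. Applying Proposition~\ref{prop:Teps2} with parameter $\alpha'$ yields
\begin{equation*}
    \P\bigl( \tau^N_{pop} \leq TN \bigr) \leq M\, N^{1-\ell(1-\alpha')/2},
\end{equation*}
and on the complementary event $\|\S(U_t^N)-\tilde h\| \leq 3M_2 N^{-\alpha'/2}$ for all $t\leq TN$, which in particular keeps $\|U_t^{N,k}\| \leq \|\tilde h\|+3M_2 N^{-\alpha'/2}$ bounded. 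Applying Proposition~\ref{prop:taukTeps} with the given $\beta$ yields
\begin{equation*}
    \P\bigl( \min_{k\in[K]}\tau^{N,k}_{PF} \leq TN \bigr) \leq M\, N^{1-\ell(1-\alpha')/2},
\end{equation*}
and on the complementary event $\|Z_t^{N,k}\| \leq N^{-\beta/2}$ for all $t\leq TN$ and all $k\in[K]$.

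Intersecting the two good events, the display at the start of the proof yields
\begin{equation*}
    \|U_t^{N,k} - \pi^k(U_t^N)\| \;\leq\; N^{-\beta/2} + C\, N^{-\alpha'/2}
\end{equation*}
uniformly on $[0,TN]$, for some constant $C$ depending only on $\tilde h$, $M$ and $M_2$. Since $\alpha'>\beta$, for $N$ sufficiently large the second term is smaller than $N^{-\beta/2}$, so the right-hand side is strictly less than $2N^{-\beta/2}$. This forces $\min_k \tau^{N,k}_{Katz} > TN$ on that event, and a union bound on the two complementary events produces the claimed estimate $\P(\min_k\tau^{N,k}_{Katz} < TN) \leq M_{\ref{lem:PR_Katz}} N^{1-\ell(1-\alpha')/2}$. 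There is no real obstacle here beyond ensuring that $\alpha'$ is chosen strictly between $\beta$ and $\alpha$, which Lemma~\ref{lem:IC_Katzenberger} precisely allows; the argument is essentially just assembling the three ingredients (scalar-product representation, smoothness of $H$, and the two stability propositions) with care about the orders of magnitude.
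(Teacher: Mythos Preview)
Your proof is correct and follows essentially the same approach as the paper: split $\|U_t^{N,k}-\pi^k(U_t^N)\|$ via the Perron--Frobenius projection, control the second piece by Cauchy--Schwarz together with Corollary~\ref{cor:H_smooth}, pass from \eqref{eq:H2} to \eqref{eq:H3} via Lemma~\ref{lem:IC_Katzenberger}, and then invoke Propositions~\ref{prop:Teps2} and~\ref{prop:taukTeps} on the good event. Your bound on the cross term carries the factor $\|U_t^{N,k}\|$ explicitly (which is indeed what Cauchy--Schwarz gives), whereas the paper absorbs it directly into a constant; both are fine since $\|U_t^{N,k}\|\leq\|\S(U_t^N)\|$ is bounded on $\{\tau^N_{pop}>TN\}$.
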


\begin{proof}
	First, one can check (as in the proof of Lemma~\ref{lem:IC_Katzenberger}) that
		{\begin{equation}
				\label{eq:CS}
				\forall u\in \Bs
				\quad
				\|u^k-\pi^k(u)\|\leq \|u^k-\l u^k,h\r\tilde h \| +
				\|h\|\|\tilde h\| \|H(\S(u))-h\|.
			\end{equation}}
	Second, we know from Lemma~\ref{lem:IC_Katzenberger} that \eqref{eq:H3} holds {for some $\alpha'\in(\beta,\alpha)$}.
	Moreover, we know from Proposition~\ref{prop:Teps2} and Proposition~\ref{prop:taukTeps}
	that under \eqref{eq:H3},  for $N\geq N_{\ref{prop:Teps2}}\vee
		N_{\ref{prop:taukTeps}}$,
	the event
	\begin{equation*}
		E_{PF}^N:=\{
		\forall t\in[0,TN],
		\quad
		\| \S(U_t^N)-\tilde{h}\|<3N^{-\alpha'/2}\quad \text{and} \quad
		\max_{j\in [K]}\|U_t^{N,j}-
		\l U_t^{N,j},h \r \tilde h\| < N^{-\beta/2}
		\}
	\end{equation*}
	occurs with probability larger than
	$1-(M_{\ref{prop:Teps2}}+M_{\ref{prop:taukTeps}})N^{1-\ell(1-{\alpha'})/2}$. Let
	$M$ be as in Corollary~\ref{cor:H_smooth} and let
	$N_{\ref{lem:PR_Katz}}\geq  N_{\ref{prop:Teps2}}\vee
		N_{\ref{prop:taukTeps}}$ be large enough that $B(\tilde{h},N^{-\beta/2})\subset \B$ for all $N\geq N_{\ref{lem:PR_Katz}}$.
	It then follows from Corollary~\ref{cor:H_smooth} and \eqref{eq:CS} that, for $N\geq N_{\ref{lem:PR_Katz}}$, on the event $E_{PF}^N$,
	\begin{equation*}
		\forall t\in[0,TN],\quad \max_{1\leq j\leq K}
		\|U_t^{N,k}-\pi^k(U^N_t)\|\leq N^{-\beta/2}+ 3M\|h\|\|\tilde h\| N^{-\alpha'/2}.
	\end{equation*}
	It remains to choose $N$ large enough that the right-hand side of the above inequality
	is smaller than $2N^{-\beta/2}$ to conclude the proof.
\end{proof}

\begin{proof}[Proof of Proposition~\ref{prop:stability_p}]
	We know from Lemma~\ref{lemma:initial_phase} that $ U^{N}_{c \log(N)} $ satisfies \eqref{eq:H2} with high probability as $ N \to \infty $, and, by Lemma~\ref{lem:IC_Katzenberger}, that it also also satisfies \eqref{eq:H3} with high probability.
	Taking $ \ell > \frac{2}{1-\alpha} $ and using Proposition~\ref{prop:Teps2} and Lemma~\ref{lem:PR_Katz}, we see that the event
	\begin{equation}
		\label{eq:E_Katz}
		E^N_{Katz}:=\{\min(\tau_{pop}^N, \tau^{N,1}_{Katz},..., \tau^{N,K}_{Katz})
		> TN \}
	\end{equation}
	has a probability which tends to one as $ N \to \infty $.
	This concludes the proof.

\end{proof}

\subsubsection{Proof of Theorem~\ref{th:main_result}}
\label{sec:proof_th}

Our strategy to prove Theorem~\ref{th:main_result} is to apply Theorem~\ref{thm:JS} below to the $ \R^K $-valued process $ (X^N_t, t \geq 0) $ defined as
\begin{equation*}
	X^{N,k}_t := \theta^k\left(U^N_{\frac{Nt}{\Sigma^2}}\right).
\end{equation*}
Then, by Lemma~\ref{lemma:initial_phase}, we know that, for some $ c > 0 $,
\begin{equation*}
	X^{N,k}_{c \Sigma^2 \frac{\log(N)}{N}} \to \theta^k_0, \quad \text{ in probability as } \quad N \to \infty.
\end{equation*}
(Note that we do not necessarily have $ X^{N,k}_0 \to \theta^k_0 $, which is why the convergence in Theorem~\ref{th:main_result} is stated on $ (0,T] $ instead of $ [0,T] $.)
In addition, we know from Proposition~\ref{lem:ito} that for $ t \leq \Sigma^2 \frac{\tau^N}{N} $,
\begin{equation*}
	d X^{N,k}_t = \frac{1}{\Sigma^2} B^k\left(U^N_{\frac{Nt}{\Sigma^2}}\right) dt + d \tilde{Y}^{N,k}_t,
\end{equation*}
where $ (\tilde{Y}^{N,k}, t \geq 0, k \in [K]) $ is a continuous martingale with quadratic variation process
\begin{equation*}
	d [\tilde{Y}^{N,k}, \tilde{Y}^{N,k'}]_t = \frac{1}{\Sigma^2} \mathscr{C}_{k,k'}\left(U^{N}_{\frac{Nt}{\Sigma^2}}\right) dt.
\end{equation*}
Then, recall from the proof of Proposition~\ref{prop:stability_p}, that the
event $E_{Katz}^N$ defined in \eqref{eq:E_Katz} has a probability that tends to one
as $N$ tends to $\infty$.
Moreover, on this event, by Lemma~\ref{lemma:approx_B_C},
\begin{equation*}
	\sup_{c \Sigma^2 \frac{\log(N)}{N} \leq t \leq T} \left| B^k\left(U^N_{\frac{Nt}{\Sigma^2}}\right) \right| +\left| \frac{1}{\Sigma^2} \mathscr{C}_{k,k'}\left(U^{N}_{\frac{Nt}{\Sigma^2}}\right) - X^{N,k}_t \left( \ind_{k=k'} - X^{N,k'}_t \right) \right| = O\left( N^{-\beta/2} \right).
\end{equation*}
Theorem~\ref{thm:JS} then allows us to conclude that $ (X^N_t, t \in (0,T]) $ converges in distribution to a Wright-Fisher diffusion started from $ \theta_0 $.

\pagebreak
\appendix
\section*{Appendix}
\section{Convergence of continuous semimartingales} \label{appendix:JacodShiryaev}

We state here a result from \cite{jacod2013limit}, on the convergence of continuous semimartingales, which is used in the proof of our main result.

\begin{definition} \label{def:semimartingale}
	We say that $ (X_t, t \geq 0) $, taking values in $ \R^d $, is a continuous semimartingale with respect to some filtration $ (\mathcal{F}_t, t \geq 0) $ with characteristics $ (B, C) $ if
	\begin{enumerate}
		\item $ B = (B_t, t \geq 0) $ is a predictable process taking values in $ \R^d $ with locally finite variation and $ B_0 = 0 $,
		\item $ C = (C_t, t \geq 0) $ is a predictable and continuous process taking values in $ M_{d\times d}(\R) $ such that $ C_0 = 0 $ and, for all $ 0 \leq s \leq t $, $ C_t - C_s $ is non-negative and symmetric,
		\item the process $ (M_t, t \geq 0) $ defined by
		      \begin{equation*}
			      M_t := X_t - X_0 - B_t
		      \end{equation*}
		      is a continuous $ \mathcal{F}_t $-martingale with quadratic variation process $ C $.
	\end{enumerate}
\end{definition}

The following result is then stated as Theorem~IX.4.44 in \cite{jacod2013limit}.

\begin{thm} \label{thm:JS}
	Suppose that, for all $ N \geq 0 $, $ X^N = (X^N_t, t \geq 0) $ is a continuous semimartingale with characteristics $ (B^N, C^N) $, taking values in $ \R^d $ and that
	\begin{enumerate}
		\item $ X^N_0 \to X_0 $ as $ N \to \infty $ in distribution,
		\item there exists a continuous map $ B : \mathcal{C}(\R_+, \R^d) \to \mathcal{C}(\R_+, \R^d) $ such that, for all $ t \geq 0 $,
		      \begin{equation*}
			      \sup_{s \in [0,t]} | B^N_s - B(X^N)_s | \to 0,
		      \end{equation*}
		      in probability as $ N \to \infty $,
		\item there exists a continuous map $ C : \mathcal{C}(\R_+, \R^d) \to \mathcal{C}(\R_+, M_d(\R)) $ such that, for all $ t \geq 0 $,
		      \begin{equation*}
			      C^N_t - C(X^N)_t \to 0
		      \end{equation*}
		      in probability as $ N \to \infty $,
		\item there exists a continuous and increasing function $ F : \R_+ \to \R_+ $ such that for all $ i \in [d] $ and $ t \geq 0 $ and for all $ \omega \in \mathcal{C}(\R_+, \R^d) $,
		      \begin{align*}
			      [C(\omega)_t]_{i,i} \leq F_t, \quad \text{and}
			      \quad Var(B(\omega)_i)_t \leq F_t,
		      \end{align*}
		      where $ t \mapsto Var(\omega)_t $ is the variation process associated to $ \omega $,
		\item there exists a unique (in distribution) process $ (X_t, t \geq 0) $ such that $ X $ is a semimartingale with characteristics $ (B(X), C(X)) $.
	\end{enumerate}
	Then $ (X^N_t, t \geq 0) $ converges in distribution as $ N \to \infty $ to $ (X_t, t \geq 0) $, started from $ X_0 $.
\end{thm}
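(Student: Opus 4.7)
The plan is to apply the classical three-step machinery for convergence of continuous semimartingales: tightness, identification of subsequential limits via the martingale problem, and uniqueness. Throughout, I would use the canonical decomposition $X^N_t = X^N_0 + B^N_t + M^N_t$, where $M^N$ is a continuous martingale with quadratic variation process $[M^N]_t = C^N_t$.

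For tightness of $(X^N)$ in $C(\R_+, \R^d)$, the key inputs are assumptions (2)--(4): they yield that $\mathrm{Tr}(C^N_t)$ is controlled in probability by $d \cdot F_t$, so the Burkholder--Davis--Gundy inequality gives $\E{\sup_{s \leq t} |M^N_s|^2} \leq K_1 F_t$ uniformly in $N$, while assumption (2) combined with assumption (4) controls the total variation of $B^N$ on $[0,t]$ by $F_t$ up to a $o_P(1)$ correction. These moment and variation estimates, together with the convergence in distribution of the initial conditions from (1), yield tightness of $(X^N, B^N, C^N, M^N)$ in the product Skorohod space via Aldous's criterion. Extract a convergent subsequence with joint limit $(X, \tilde B, \tilde C, \tilde M)$; by construction $\tilde M = X - X_0 - \tilde B$.

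The next step is to identify the characteristics of the limit. The continuous mapping theorem, applied to the continuous maps $B$ and $C$ and combined with assumptions (2) and (3), forces $\tilde B = B(X)$ and $\tilde C = C(X)$ almost surely. To verify that $\tilde M$ is a continuous martingale with bracket $C(X)$, I would test against bounded continuous functionals $\Phi$ of the path restricted to $[0,s]$: the martingale property of $M^N$ and the fact that $C^N$ is the quadratic variation of $M^N$ give, for $s \leq t$,
\begin{equation*}
\E{\Phi(X^N|_{[0,s]})(M^{N,i}_t - M^{N,i}_s)} = 0 \quad \text{and} \quad \E{\Phi(X^N|_{[0,s]})\bigl(M^{N,i}_t M^{N,j}_t - M^{N,i}_s M^{N,j}_s - (C^{N,ij}_t - C^{N,ij}_s)\bigr)} = 0.
\end{equation*}
Passing to the limit along the subsequence yields the analogous identities for $\tilde M$ and $C(X)$, which shows that $X$ is a continuous semimartingale with characteristics $(B(X), C(X))$. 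Assumption (5) then forces the law of $X$ to coincide with that of the unique such semimartingale started from $X_0$, and since the limit does not depend on the extracted subsequence, the entire sequence converges.

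The main technical obstacle is justifying the passage to the limit in the second identity, which requires uniform integrability of the quadratic term $M^{N,i}_t M^{N,j}_t$ and of $C^{N,ij}_t$. This is handled by iterating BDG: one obtains $\E{\sup_{s \leq t} |M^N_s|^{2p}} \leq K_p F_t^p$ for any $p \geq 1$, providing $L^p$-bounds well beyond $L^2$ and legitimizing the interchange of limit and expectation. A subtler point is the \emph{joint} convergence of $(X^N, M^N, C^N)$ rather than marginal convergence, which is needed so that the continuous functional $\Phi$ applied to $X^N$ is compatible with the limit on the $M^N$ side; this is ensured by realizing $M^N$ and $C^N$ as continuous functionals of the same underlying semimartingale and exploiting Skorohod's representation theorem.
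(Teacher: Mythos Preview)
The paper does not prove this theorem; it is simply quoted from Jacod and Shiryaev's book (Theorem~IX.4.44 in \cite{jacod2013limit}) and used as a black box in Section~\ref{sec:proof_th}. So there is no paper proof to compare against --- your sketch is an attempt to reprove a standard result that the authors deliberately outsourced.

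That said, your outline follows the usual three-step strategy (tightness, identification via the martingale problem, uniqueness), which is indeed the skeleton of the argument in Jacod--Shiryaev. There is one genuine gap worth flagging. You claim that BDG gives $\Ee\bigl[\sup_{s \leq t} |M^N_s|^{2p}\bigr] \leq K_p F_t^p$ uniformly in $N$, and later iterate this to obtain uniform integrability of $M^{N,i}_t M^{N,j}_t$. But assumptions (3) and (4) only give that $C^N_t$ is bounded \emph{in probability} by (roughly) $d \cdot F_t$, not in $L^p$: you know $C^N_t - C(X^N)_t \to 0$ in probability and $C(X^N)_t$ is deterministically bounded, which says nothing about the tails of $C^N_t$ for fixed $N$. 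So the moment bounds you invoke are not justified as written, and the passage to the limit in the quadratic identity is not secured. The standard fix is to localize: introduce stopping times at which $\mathrm{Tr}(C^N)$ exceeds a fixed level, show these stopping times diverge in probability, and run the martingale-problem argument on the stopped processes. This is essentially what Jacod--Shiryaev do, and it is more delicate than your sketch suggests.
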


\section{Stochastic differential equations forced onto a manifold by a large drift} \label{sec:statement_katzenberger}

\label{sec:katzenberger}

Here we restate Katzenberger's result \citep[Theorem~6.3]{katzenberger1990solutions}.
Let $ \mathcal{U} \subset \mathbb{R}^d $ be an open set, let $ \mathbf{F} : \mathcal{U} \to \R^d $ be $ C^1 $ and assume that $ \Gamma := \mathbf{F}^{-1}(0) $ is a $ C^0 $ submanifold of $ \mathcal{U} $ of dimension $ m $.
Let $ \psi(x,t) $ be the solution of
\begin{equation} \label{def_flow_katz}
	\psi(x,t) = x + \int_{0}^{t} \mathbf{F}(\psi(x,s)) ds,
\end{equation}
define
\begin{equation*}
	\mathcal{U}_\Gamma := \lbrace x \in \mathcal{U} : \lim_{t \to \infty} \psi(x,t) \text{ exists and is in } \Gamma \rbrace,
\end{equation*}
and set, for $ x \in \mathcal{U}_\Gamma $,
\begin{equation*}
	\Phi(x) := \lim_{t \to \infty} \psi(x,t).
\end{equation*}

For all $ n \geq 1 $, let $ (\Omega^n, \mathcal{F}^n, (\mathcal{F}^n_t)_{t \geq 0}, \P) $ be a filtered probability space on which are defined the random variables $ Z_n $ and $ A_n $, where $ (Z_n(t), t \geq 0) $ is a c\`adl\`ag $ \R^e $-valued $ \mathcal{F}^n_t $-semimartingale with $ Z_n(0) = 0 $ and $ (A_n(t), t \geq 0) $ is a non-decreasing real-valued c\`adl\`ag $ \mathcal{F}^n_t $-adapted process with $ A_n(0) = 0 $.
Let $ \mathbf{G}_n : \mathcal{U} \to M_{d,e}(\R) $ be continuous and such that $ \mathbf{G}_n $ converges to a limit $ \mathbf{G} $, uniformly on compact subsets of $ \mathcal{U} $, as $ n \to \infty $.

Then, for each $ n \geq 1 $, let $ (X_n(t), t \geq 0) $ be an $ \R^d $-valued $ \mathcal{F}^n_t $-semimartingale satisfying, for any compact $ K \subset \mathcal{U} $,
\begin{equation} \label{katz:sde}
	X_n(t) = X_n(0) + \int_{0}^{t} \mathbf{G}_n(X_n(s)) dZ_n(s) + \int_{0}^{t} \mathbf{F}(X_n(s)) dA_n(s),
\end{equation}
for all $ t \leq \lambda_n(K) $ where
\begin{equation*}
	\lambda_n(K) := \inf \lbrace t \geq 0 : X_n(t^-) \notin \mathring{K} \text{ or } X_n(t) \notin \mathring{K} \rbrace.
\end{equation*}
For a process $ Y $, let $ Y^\lambda $ denote the process $ Y $ stopped at $ \lambda $, i.e.
\begin{equation*}
	Y^\lambda(t) := Y(t \wedge \lambda).
\end{equation*}
We further assume the following. For any compact $ K \subset \mathcal{U} $, $ Z_n^{\lambda_n(K)} $ admits a decomposition as the sum of a local martingale $ M_n $ and a finite variation process $ F_n $ such that there exists a sequence of stopping times $ (\tau^k_n, k \geq 1) $ with
\begin{equation*}
	\P(\tau^k_n \leq k) \leq 1/k,
\end{equation*}
and
\begin{equation*}
	\lbrace [M_n]_{t \wedge \tau^k_n} + T_{t \wedge \tau^k_n}(F_n), n \geq 1 \rbrace
\end{equation*}
is uniformly integrable for all $ t \geq 0 $ and $ k \geq 1 $, where $ T_t(Y) $ denotes the total variation of $ Y $ on the interval $ [0,t] $.
Moreover, suppose that, for any $ \varepsilon > 0 $ and $ T > 0 $,
\begin{equation*}
	\lim_{\gamma \to 0} \: \limsup_{n \to \infty} \: \P\left(\sup_{0 \leq t \leq T} (T_{t+\gamma}(F_n)-T_t(F_n)) > \varepsilon\right) = 0.
\end{equation*}
Also assume that, for all $ 0 < \varepsilon < T $ and any compact $ K \subset \mathcal{U} $,
\begin{equation*}
	\inf_{0 \leq t \leq T \wedge \lambda_n(K) - \varepsilon} A_n(t + \varepsilon) - A_n(t) \to \infty
\end{equation*}
in distribution as $ n \to \infty $.
Finally, assume that, for any compact $ K \subset \mathcal{U} $ and any $ T > 0 $,
\begin{align*}
	\sup_{0 \leq t \leq T \wedge \lambda_n(K)} | \Delta Z_n(t) | + | \Delta A_n(t) | \to 0,
\end{align*}
in distribution as $ n \to \infty $, where $ \Delta Y(t) := Y(t) - Y(t^-) $.

Katzenberger's main result (Theorem~6.3 in \cite{katzenberger1990solutions}) is the following.

\begin{thm} \label{thm:katzenberger}
	Assume, in addition to all the above, that $ \Gamma $ is $ C^2 $ and that, for all $ y \in \Gamma $, the Jacobian matrix of $ \mathbf{F} $ at $ y $ has $ d-m $ eigenvalues with negative real parts.
	Assume that $ \Phi : \mathcal{U}_\Gamma \to \Gamma $ is $ C^2 $ and that $ X_n(0) $ converges in distribution to $ X(0) $ which takes values in $ \mathcal{U}_\Gamma $.
	Then define
	\begin{equation} \label{def:Yn}
		Y_n(t) := X_n(t) - \psi(X_n(0), A_n(t)) + \Phi(X_n(0)),
	\end{equation}
	and, for any compact $ K \subset \mathcal{U} $,
	\begin{equation*}
		\mu_n(K) := \inf \lbrace t \geq 0 : Y_n(t^-) \notin \mathring{K} \text{ or } Y_n(t) \notin \mathring{K} \rbrace.
	\end{equation*}
	Then, for any compact $ K \subset \mathcal{U} $, $ (Y^{\mu_n(K)}, Z_n^{\mu_n(K)}, \mu_n(K)) $ is tight in $ D([0,\infty), \R^d \times \R^e) \times [0,\infty] $, and any limit of a converging subsequence $ (Y, Z, \mu) $ is such that $ (Y, Z) $ is a continuous semimartingale, $ Y $ takes values in $ \Gamma $,
	\begin{equation} \label{mu_liminf}
		\mu \geq \inf \lbrace t \geq 0 : Y(t) \notin \mathring{K} \rbrace
	\end{equation}
	almost surely and
	\begin{equation} \label{katz_limiting_sde}
		Y(t) = Y(0) + \int_{0}^{t \wedge \mu} \partial \Phi(Y(s)) \mathbf{G}(Y(s)) dZ(s) + \frac{1}{2} \sum_{ijk\ell} \int_{0}^{t \wedge \mu} \partial_{ij} \Phi(Y(s)) \mathbf{G}_{ik}(Y(s)) \mathbf{G}_{j\ell}(Y(s)) d[Z^k, Z^l]_s.
	\end{equation}
\end{thm}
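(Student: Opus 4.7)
The plan is to exploit the smooth projection $\Phi : \mathcal{U}_\Gamma \to \Gamma$ together with the fact that the invariance $\Phi \circ \psi(\cdot, t) = \Phi$ differentiates to the identity $\partial \Phi(x) \mathbf{F}(x) = 0$ on $\mathcal{U}_\Gamma$. Applying Ito's formula to $\Phi(X_n(t))$ then kills the large-drift integral against $dA_n$, and produces exactly the candidate limiting form:
\begin{equation*}
\Phi(X_n(t)) = \Phi(X_n(0)) + \int_0^{t} \partial\Phi(X_n(s)) \mathbf{G}_n(X_n(s)) dZ_n(s) + \frac{1}{2} \sum_{ijk\ell} \int_0^t \partial_{ij}\Phi(X_n(s)) \mathbf{G}_{n,ik}(X_n(s)) \mathbf{G}_{n,j\ell}(X_n(s)) d[Z_n^k, Z_n^\ell]_s.
\end{equation*}
So the task reduces to (i) showing that $\Phi(X_n)$ and $Y_n$ are asymptotically close, and (ii) that the integrals on the right-hand side converge to their natural counterparts along any subsequential limit.

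For (i), I would first establish a quantitative attraction estimate: with high probability on $\{t < \lambda_n(K)\}$, $X_n(t)$ lies in a shrinking tubular neighborhood of $\Gamma$ for all $t$ bounded away from $0$. The spectral assumption that the Jacobian of $\mathbf{F}$ on $\Gamma$ has $d-m$ eigenvalues with negative real parts linearizes to a contraction of the deterministic flow $\psi$ transverse to $\Gamma$, while the hypothesis $\inf_t [A_n(t+\varepsilon) - A_n(t)] \to \infty$ ensures that this contraction acts on a time scale short compared to the noise excitation. A Lyapunov-type argument using a smooth function proportional to $\mathrm{dist}(\cdot, \Gamma)^2$ in the tubular neighborhood, combined with BDG-type inequalities (legitimized by uniform integrability of $[M_n]_t + T_t(F_n)$ and the vanishing-jumps assumption on $Z_n$ and $A_n$), yields the estimate. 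Then, since $A_n(t) \to \infty$ uniformly on compact subsets of $(0, \infty)$, $\psi(X_n(0), A_n(t)) \to \Phi(X_n(0))$; hence $Y_n(t) - X_n(t) \to 0$, and combined with $X_n(t) - \Phi(X_n(t)) \to 0$ (using $C^2$ regularity of $\Phi$ near $\Gamma$) we conclude $Y_n(t) - \Phi(X_n(t)) \to 0$ uniformly on compact subsets of $(0, T]$.

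For (ii), I would derive tightness of $(Y_n^{\mu_n(K)}, Z_n^{\mu_n(K)}, \mu_n(K))$ via Aldous' criterion: the $Z_n^{\mu_n(K)}$ components are tight thanks to the semimartingale decomposition hypothesis and the uniform variation control on $F_n$, and the $Y_n^{\mu_n(K)}$ components are tight through the Ito expansion of $\Phi(X_n)$ (together with the matching from (i)) since $\partial \Phi$ and $\partial^2 \Phi$ are bounded on a compact neighborhood of $\Gamma \cap K$. For any subsequential limit $(Y, Z, \mu)$, the uniform convergence $\mathbf{G}_n \to \mathbf{G}$ together with continuity of $\partial\Phi, \partial^2\Phi$ on $\mathcal{U}_\Gamma$ allows passage to the limit in each stochastic integral by a Kurtz-Protter-type semimartingale convergence argument, yielding \eqref{katz_limiting_sde}. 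That $Y$ takes values in $\Gamma$ follows because both $\Phi(X_n(t)) \in \Gamma$ and $Y_n(t) - \Phi(X_n(t)) \to 0$; the inclusion $\mu \geq \inf\{t: Y(t) \notin \mathring{K}\}$ follows from the definition of $\mu_n(K)$ through $Y_n$.

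The principal obstacle is the quantitative control in step (i): converting a purely linear-algebraic attractivity condition on $\Gamma$ into a probabilistic estimate on $\mathrm{dist}(X_n(t), \Gamma)$ requires a bootstrap argument to guarantee $X_n$ does not escape the tubular neighborhood where the linearization is valid, and careful handling of the non-Markovian time change $A_n$. Additional technical subtleties include absorbing the discontinuities of $Z_n$ and $A_n$ into negligible error terms when passing to the limit (to secure continuity of $(Y,Z)$), localizing all estimates to the stopping times $\lambda_n(K)$ and $\mu_n(K)$, and ensuring that the stopped stochastic integral $\int_0^{t \wedge \mu_n(K)} \partial\Phi(X_n(s)) \mathbf{G}_n(X_n(s)) dZ_n(s)$ is well-defined and converges to $\int_0^{t \wedge \mu} \partial\Phi(Y(s)) \mathbf{G}(Y(s)) dZ(s)$ despite $\Phi$ being defined only in $\mathcal{U}_\Gamma$.
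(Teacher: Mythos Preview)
This theorem is not proved in the paper. It is stated in Appendix~\ref{sec:katzenberger} purely as a restatement of Theorem~6.3 in Katzenberger~\cite{katzenberger1990solutions}, and the paper invokes it as a black box in the first proof of Theorem~\ref{th:main_result}. There is therefore no ``paper's own proof'' to compare your proposal against.

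That said, your outline is a reasonable high-level sketch of the strategy in Katzenberger's original paper: the cancellation $\partial\Phi(x)\mathbf{F}(x)=0$ obtained by differentiating $\Phi\circ\psi(\cdot,t)=\Phi$, the application of It\^o's formula to $\Phi(X_n)$ to remove the $dA_n$ integral, the attraction-to-$\Gamma$ estimate (which is Katzenberger's Theorem~5.1, proved there via Lyapunov functions as you suggest), and passage to the limit in the stochastic integrals via Kurtz--Protter type arguments are all ingredients of the original proof. The technical difficulties you flag (bootstrap in the tubular neighborhood, handling the non-Markovian $A_n$, localization via stopping times, and jump control) are exactly the issues Katzenberger's paper spends most of its length on, and turning your sketch into a full proof would essentially amount to reproducing that work. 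For the purposes of the present paper, however, you should simply cite the result rather than re-prove it.
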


\begin{rem}
	By the definition of $ \Phi $ and by the assumptions on $ A_n $,
	\begin{equation*}
		\Phi(X_n(0)) - \psi(X_n(0), A_n(t)) \to 0
	\end{equation*}
	in probability as $ n \to \infty $ for any $ t > 0 $.
	As a result, the convergence of $ Y_n $ on $ [0,T] $ yields the convergence (to the same limit) of $ X_n $ on any interval of the form $ [\varepsilon, T] $ for $ 0 < \varepsilon < T $.
\end{rem}

\section{Regularity of Katzenberger projections}
\label{sec:proof_regularity}
\begin{proof}[Proof of Lemma~\ref{lem:reg_katzenberger}]
	First, observe that the vector field $b$ belongs to $\mathcal{C}^3(\mathcal{B})$,
	so that, by \cite[Theorem 2.11]{teschl2012ordinary},
	\[
		\phi : (t, v_0) \mapsto \phi(t, v_0) \in \mathcal{C}^3(\mathbb{R}_+ \times \mathcal{B})
		\quad \text{and} \quad
		\varphi : (t, v_0, w_0) \mapsto \varphi(t, v_0, w_0) \in \mathcal{C}^3(\mathbb{R}_+ \times \mathcal{B} \times \mathbb{R}^E).
	\]
	Next, we show that for all $v_0 \in \mathcal{B}$ and $w_0 \in \mathbb{R}^E$, the derivative $\nabla_v \varphi(t, v_0, w_0)$ converges to a limit as $t \to \infty$. Recall that for each $x\in \Rv$, the partial derivative $\frac{\partial \varphi}{\partial v_x}(t, v_0, w_0)$ solves the ODE
	\begin{equation}
		\label{eq:dvphi_1}
		\frac{d}{dt} \left[\frac{\partial \varphi}{\partial v_x}(t, v_0, w_0)\right]
		= F(\phi(t, v_0)) \left[\frac{\partial \varphi}{\partial v_x}(t, v_0, w_0)\right]
		+ DF(\phi(t, v_0)) \left(\frac{\partial \phi}{\partial v_x}(t, v_0)\right) \varphi(t, v_0, w_0),
	\end{equation}
	with initial condition $\frac{\partial \varphi}{\partial v_x}(0, v_0, w_0) = 0$.
	Similarly, $\frac{\partial \phi}{\partial v_x}(t, v_0)$ solves
	\[
		\frac{d}{dt} \left[\frac{\partial \phi}{\partial v_x}(t, v_0)\right]
		= J_b(\phi(t, v_0)) \frac{\partial \phi}{\partial v_x}(t, v_0),
	\]
	with initial condition $\frac{\partial \phi}{\partial v_x}(0, v_0) = e_x$.
	Duhamel's formula then yields
	\begin{equation}
		\label{eq:d1}
		\frac{\partial \varphi}{\partial v_x}(t, v_0, w_0)
		= \int_0^t e^{(t-s)F(\tilde{h})}
		G_1(s,v_0,w_0)  \, ds,
	\end{equation}
	with
	\[
		G_1(s,v_0,w_0)=
		(F(\phi(s, v_0)) - F(\tilde{h}))\frac{\partial \varphi}{\partial v_x}(s, v_0, w_0)
		+ DF(\phi(s, v_0)) \left(\frac{\partial \phi}{\partial v_x}(s, v_0)\right) \varphi(s, v_0, w_0).
	\]

	Using the continuity of the Jacobian $J_b$ at $\tilde{h}$ together with \eqref{eq:HB1},
	we can choose $t_0>0$ such that for all $t \geq t_0$ and all $v_0\in \B$,
	the maximum real part of the eigenvalues of $J_b(\phi(t, v_0))$ is less than $\Re(\tilde{\lambda}_1)/2$. As a consequence, $\frac{\partial \phi}{\partial v_x}(t, v_0)$ decays exponentially to zero as $t \to \infty$,
	uniformly in $v_0\in \B$.
	Since $F$ is continuously differentiable and $s \mapsto \phi(s, v_0)$ is bounded, we obtain
	\begin{equation}
		\label{eq:exp_decay_DF}
		\left\| DF(\phi(s, v_0)) \left( \frac{\partial \phi}{\partial v_x}(s, v_0) \right) \right\| \leq M e^{-\gamma s}
	\end{equation}
	for some constants $M, \gamma > 0$. Putting this together with
	\eqref{eq:bound_F} and \eqref{eq:bound_vphi0}, we obtain
	\begin{equation}
		\label{eq:bound_normdphi}
		\left\|\frac{\partial \varphi}{\partial v_x}(t,v_0,w_0)\right\|
		\leq M\left(1+\int_0^t e^{-\gamma s}\left\|\frac{\partial \varphi}{\partial v_x}(s,v_0,w_0)\right\|\right)ds,
	\end{equation}
	which implies, by Grönwall's inequality, that the left-hand side of the above is uniformly
	bounded in $t\in[0,\infty)$ and $v_0\in\B$.
	{We then use the Perron-Frobenius decomposition \eqref{eq:PR_dec} to show that the right-hand side of \eqref{eq:d1} converges as $t\to \infty$. As in the proof of Proposition~\ref{prop:scalar_form},
	we first see from  \eqref{eq:bound_F}, \eqref{eq:bound_vphi0} and
	\eqref{eq:exp_decay_DF} that
	\[
		\int_{0}^{t}PG_1(s,v_0,w_0)ds
	\]
	is absolutely convergent (and thus convergent) when $t\to\infty$. Then,
	by an argument similar to that used to prove \eqref{eq:I2} (combined with \eqref{eq:bound_F}, \eqref{eq:bound_vphi0} and
	\eqref{eq:exp_decay_DF}),  one can show that
	\[
		\int_{0}^{t}R_{t-s}G_1(s,v_0,w_0)ds\xrightarrow[t\to\infty]{}0.
	\]}
	Using the same bounds, one can check that
	the remainder of the integral converges to $0$, uniformly in $v_0\in\B$. This implies that $v_0 \mapsto \theta(v_0, w_0)$ is differentiable on $\mathcal{B}$.

	The second and third order derivatives are handled in a similar way.
	We know that $\frac{\partial^2 \varphi}{\partial v_x \partial v_y}(t,v_0,w_0)$ is solution to the ODE
	\begin{align*}
		\frac{d}{dt}\left[\frac{\partial^2 \varphi}{\partial v_x \partial v_y}(t,v_0,w_0)\right]
		 & = F(\phi(t,v_0))\left[\frac{\partial^2 \varphi}{\partial v_x \partial v_y}(t,v_0,w_0)\right]
		+ G_2(t,v_0,w_0),
	\end{align*}
	with
	\begin{align*}
		G_2(t,v_0,w_0) & =
		DF(\phi(t,v_0))\left(\frac{\partial \phi}{\partial v_x}(t,v_0)\right)\frac{\partial \varphi}{\partial v_y}(t,v_0,w_0)                +DF(\phi(t,v_0))\left(\frac{\partial \phi}{\partial v_y}(t,v_0)\right)\frac{\partial \varphi}{\partial v_x}(t,v_0,w_0)        \\
		               & + D^2F(\phi(t,v_0))\left(\frac{\partial \phi}{\partial v_x}(t,v_0), \frac{\partial \phi}{\partial v_y}(t,v_0)\right)\varphi(t,v_0,w_0)  + DF(\phi(t,v_0))\left(\frac{\partial^2 \phi}{\partial v_x \partial v_y}(t,v_0)\right)\varphi(t,v_0,w_0),
	\end{align*}
	and with initial condition $\frac{\partial^2 \varphi}{\partial v_x \partial v_y}(0,v_0,w_0)=0$.
	Moreover,
	and $\frac{\partial^2 \phi}{\partial v_x \partial v_y}(t,v_0)$ is solution
	to
	\[
		\frac{d}{dt}\left[\frac{\partial \phi}{\partial v_x \partial v_y}(t,v_0)\right]
		=J_b(\phi(t,v_0))\frac{\partial^2 \phi}{\partial v_x \partial v_y}(t,v_0)
		+D^2b(\phi(t,v_0))\left(\frac{\partial \phi}{\partial v_x}(t,v_0), \frac{\partial \phi}{\partial v_y}(t,v_0)\right)
	\]
	with  $\frac{\partial \phi}{\partial v_x \partial v_y}(0,v_0)=0$.
	Applying Grönwall's lemma, we obtain that the norm of $\frac{\partial \phi}{\partial v_x \partial v_y}(t,v_0)$ decays exponentially. This result relies on
	\eqref{eq:HB1} and the continuity of $J_b$. Writing Duhamel's
	formula for $\frac{\partial^2\varphi}{\partial v_x \partial v_y}$ and
	substituting the bounds from  \eqref{eq:bound_F} and \eqref{eq:bound_vphi0}
	allows us  to establish an inequality analogous to \eqref{eq:bound_normdphi}  for
	$\frac{\partial^2\varphi}{\partial v_x \partial v_y}$. Hence,
	$\|\frac{\partial^2\varphi}{\partial v_x \partial v_y}\|$ is uniformly
	bounded in $t\in[0,\infty)$ and $v_0\in\B$. Reinjecting this estimate in Duhamel's
	formula shows that $\nabla^2  \varphi(t,v_0,w_0)$
	converges to a limit as $t\to\infty$, uniformly in $v_0\in \B$.
	The third derivative can be treated similarly using the same technique.
\end{proof}

\bibliography{effective_population_size,biblio_raph}
\bibliographystyle{plain}
\end{document}